\title{\vspace{-1cm} Syzygies of Prym and paracanonical curves of genus $8$}
\author{Elisabetta Colombo, Gavril Farkas, Alessandro Verra, and Claire Voisin}
\institution{Universit\`a di Milano, Dipartimento di Matematica, Via Cesare Saldini 50,
 20133 Milano, Italy}\\
\email{elisabetta.colombo@unimi.it}}
\institution{Humboldt-Universit\"at zu Berlin,
Institut f\"ur Mathematik, \indent Unter den Linden 6,
10099 Berlin, Germany}\\
\email{farkas@math.hu-berlin.de}}
\institution{Universit\`a Roma Tre, Dipartimento di Matematica, Largo San Leonardo Murialdo, 1-00146 Roma, Italy}\\
\email{verra@mat.uniroma3.it}}
\institution{Coll\'ege de France 3, Rue d'Ulm, 75005 Paris, France}\\
\email{claire.voisin@imj-prg.fr}}
\date{\vspace{-5ex}} 
\journal{\'Epijournal de G\'eom\'etrie Alg\'ebrique} 
\newtheorem{theo}{Theorem}
\newtheorem{prop}[theo]{Proposition}
\newtheorem{lemm}[theo]{Lemma}
\newtheorem{coro}[theo]{Corollary}
\newtheorem{rema}[theo]{Remark}
\newtheorem{defi}[theo]{Definition}
\newtheorem{sublemm}[theo]{Sublemma}
\newtheorem{question}[theo]{Question}
\def\PP{{\textbf P}}
\def\OO{\mathcal{O}}
\def\cN{\mathcal{N}}
\def\cA{\mathcal{A}}
\def\G{\mathcal{G}}
\def\L{\mathcal{L}}
\def\I{\mathcal{I}}
\def\cM{\mathcal{M}}
\def\cR{\mathcal{R}}
\def\rr{\overline{\mathcal{R}}}
\def\cZ{\mathcal{Z}}
\def\cC{\mathcal{C}}
\def\H{\mathcal{H}}
\def\Pic0{{\rm Pic}^0(X)}
\def\mm{\overline{\mathcal{M}}}
\def\pr{\widetilde{\mathcal{R}}}
\begin{document}


\maketitle



\begin{prelims}

\vspace{-0.15cm}

\def\abstractname{Abstract}
\abstract{By analogy with Green's Conjecture on syzygies of canonical curves, the Prym-Green conjecture predicts that the resolution of a general level $p$ paracanonical curve of genus $g$ is natural. The Prym-Green Conjecture is known to hold in odd genus for almost all levels. Probabilistic arguments strongly suggested that the conjecture might fail for level 2 and genus 8 or 16. In this paper, we present three geometric proofs of the surprising failure of the Prym-Green Conjecture in genus 8, hoping that the methods introduced here will shed light on all the exceptions to the Prym-Green Conjecture for genera with high divisibility by~2.}

\keywords{Paracanonical curve; syzygy; genus 8; moduli of Prym varieties}

\MSCclass{13D02; 14H10}

\vspace{0.05cm}

\languagesection{Fran\c{c}ais}{%

\textbf{Titre. Syzygies de Prym et courbes paracanoniques de genre 8}
\commentskip
\textbf{R\'esum\'e.}
Par analogie avec la conjecture de Green sur les syzygies des courbes canoniques, la conjecture de Prym-Green pr\'edit que la r\'esolution d'une courbe g\'en\'erale, paracanonique, de genre $g$ et de niveau $p$ est naturelle. Cette conjecture est connue en genre impair pour presque tout niveau. Des arguments probabilistes ont fortement sugg\'er\'e qu'elle pourrait s'av\'erer fausse pour le niveau 2 en genre 8 et 16. Dans cet article, nous pr\'esentons trois d\'emonstrations g\'eom\'etriques de la surprenante non-validit\'e de la conjecture de Prym-Green en genre 8, en esp\'erant que les m\'ethodes introduites apporteront un \'eclairage nouveau sur toutes les exceptions \`a la conjecture de Prym-Green pour des genres divisibles par une grande puissance de 2.}

\end{prelims}


\newpage

\setcounter{tocdepth}{1}
\tableofcontents

\section{Introduction}

By analogy with Green's Conjecture on the syzygies of a general canonical curve \cite{V1}, \cite{V2}, the Prym-Green Conjecture, formulated in \cite{FL} and \cite{chiodetal},
predicts that the resolution of a \emph{paracanonical} curve
$$\phi_{K_C\otimes \eta}:C\hookrightarrow \PP^{g-2},$$
where $C$ is a general curve of genus $g$ and $\eta\in \mbox{Pic}^0(C)[\ell]$ is an $\ell$-torsion point is natural. For even genus $g=2i+6$, the Prym-Green Conjecture amounts to the vanishing statement
\begin{equation}\label{pg1}
K_{i,2}(C,K_C\otimes \eta)=K_{i+1,1}(C,K_C\otimes \eta)=0,
\end{equation}
in terms of Koszul cohomology groups. Equivalently, the genus $g$ paracanonical level $\ell$ curve $C\subseteq \PP^{g-2}$ satisfies the Green-Lazarsfeld property $(N_i)$. The Prym-Green Conjecture has been proved for all \emph{odd} genera $g$ when $\ell=2$, see \cite{FK1}, or $\ell \geq \sqrt{\frac{g+2}{2}}$, see \cite{FK2}. For even genus, the Prym-Green Conjecture has been established by degeneration and using computer algebra tools in \cite{chiodetal} and \cite{cofre}, for all $\ell\leq 5$ and $g\leq 18$, with two possible mysterious exceptions in level $2$ and genus $g=8, 16$ respectively. The last section of \cite{chiodetal} provides various pieces of evidence, including a probabilistic argument, strongly suggesting that for  $g=8$, one has $\mbox{dim } K_{1,2}(C,K_C\otimes \eta)=1$, and thus the vanishing (\ref{pg1}) fails in this case. It is tempting to believe that the exceptions $g=8,16$ can be extrapolated to higher genus, and that for genera $g$ with high divisibility by $2$, there are genuinely novel ways of constructing syzygies of Prym-canonical curves waiting to be discovered.  It would be very interesting to test experimentally the next relevant case $g=24$. Unfortunately, due to memory and running time constraints, this is currently completely out of reach, see \cite{chiodetal} and \cite{es}.

\vskip 4pt

The aim of this paper is to confirm the expectation formulated in \cite{chiodetal} and offer several geometric explanations for the surprising failure of the Prym-Green Conjecture in genus $8$, hoping that the geometric methods described here for constructing syzygies of Prym-canonical curves will eventually shed light on all the exceptions to the Prym-Green Conjecture. We choose a general Prym-canonical curve of genus $8$
$$\phi_{K_C\otimes \eta}:C\hookrightarrow \PP^6,$$
with $\eta^{\otimes 2}=\OO_C$.
Set $L:=K_C\otimes \eta$ and  denote $I_{C,L}(k):=\mbox{Ker}\bigl\{\mbox{Sym}^k H^0(C,L)\rightarrow H^0(C,L^{\otimes k})\bigr\}$ for all $k\geq 2$. Observe that $\mbox{dim } I_{C,L}(2)=\mbox{dim } K_{1,1}(C,L)=7$ and $\mbox{dim } I_{C,L}(3)=49$, therefore as $[C,\eta]$ varies in moduli, the multiplication map
$$\mu_{C,L}: I_{C,L}(2)\otimes H^0(C,L)\rightarrow I_{C,L}(3)$$ globalizes to a morphism of vector bundles of the same rank over the stack $\cR_{8}$ classifying pairs $[C,\eta]$, where
$C$ is a smooth curve of genus $8$ and $\eta \in \mbox{Pic}^0[2]\setminus \{\OO_C\}$.

\begin{theo}\label{theomainintro} For a general Prym curve $[C, \eta]\in \cR_{8}$, one has  $K_{1,2}(C,L)\not=0$. Equivalently
the multiplication map $\mu_{C,L}:I_{C,L}(2)\otimes H^0(C,L)\rightarrow I_{C,L}(3)$ is not an isomorphism.
\end{theo}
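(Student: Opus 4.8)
The plan is to produce, for a general Prym curve $[C,\eta]\in\cR_8$, an explicit nonzero element in $K_{1,2}(C,L)$, or equivalently to exhibit a quadric $Q\in I_{C,L}(2)$ and a linear form such that the corresponding product fails to be detected inside $I_{C,L}(3)$. Since $\dim I_{C,L}(2)\otimes H^0(C,L)=\dim I_{C,L}(3)=49$, proving $\mu_{C,L}$ is not an isomorphism is the same as proving it is not injective: I would look for a relation $\sum_j Q_j\cdot s_j=0$ in $H^0(C,L^{\otimes 3})$ with the $Q_j\in I_{C,L}(2)$ not all zero and $s_j\in H^0(C,L)$. The natural source of such a relation is a \emph{geometric} object on which the Prym-canonical curve of genus $8$ sits, analogous to the role of K3 surfaces in Green's Conjecture. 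A general genus $8$ curve lies on a del Pezzo surface / appears as a quadric section of $G(2,6)$, and a general Prym curve of genus $8$ should lie on a suitable surface (e.g. a Nikulin surface, or a surface built from the Prym data) whose own quadratic syzygies restrict to $C$ and force an extra Koszul class that is invisible from the intrinsic numerics.

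The key steps, in order, would be: (i) identify the auxiliary surface $S\supset C$ carrying the general Prym curve of genus $8$, together with the line bundle on $S$ restricting to $L=K_C\otimes\eta$; (ii) compare the Koszul cohomology of $(C,L)$ with that of $(S,L_S)$ via the standard restriction/hyperplane-section exact sequences, showing $K_{1,2}(S,L_S)\hookrightarrow K_{1,2}(C,L)$ or producing the class directly on $S$; (iii) compute $K_{1,2}(S,L_S)$ — here one uses that $S$ is a well-understood surface (rational, or K3-like) so its resolution and the relevant Koszul groups are accessible, possibly via the vanishing/non-vanishing theorems for such surfaces; (iv) check the genericity: show the construction works for $[C,\eta]$ in a dense open (or at least a component dominating $\cR_8$), and since $\mu_{C,L}$ globalizes to a map of vector bundles of equal rank, non-injectivity at one point of every component gives the generic statement by semicontinuity of $\dim\ker\mu$ — actually, one must be careful that the point where we verify non-injectivity lies in the same component of $\cR_8$ as the general Prym curve, or argue that $\cR_8$ is irreducible (which it is) so one special point suffices.

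The main obstacle I expect is step (iii) together with the genericity check: it is not enough to find \emph{some} curve on \emph{some} surface with an extra syzygy; one must ensure that the pair $(C,\eta)$ arising this way is Prym-general, i.e. that the surface construction dominates $\cR_8$, and simultaneously that the extra quadratic syzygy genuinely survives on $C$ rather than being killed when passing from $S$ to its hyperplane section. Controlling $H^1(S,\,\cdot\,)$ and the connecting maps in the relevant exact sequences — so that $K_{1,2}(S,L_S)$ injects into $K_{1,2}(C,L)$ and is itself nonzero — is the technical heart. The dimension bookkeeping ($7$, $49$, $49$) is tight, which on the one hand means a single extra class forces non-isomorphism, but on the other hand means there is no slack: any miscount in the cohomology of $S$ or in the restriction sequence destroys the argument, so the computation must be done with care, presumably checking $h^0$, $h^1$ of the relevant sheaves on $S$ exactly and invoking vanishing theorems (Kawamata–Viehweg, or adjunction on the surface) to kill the obstructing $H^1$'s. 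A clean way to organize this, which I would pursue, is to exhibit the syzygy not abstractly but as the determinant/Koszul relation coming from a rank-two bundle or an explicit resolution on $S$, so that its restriction to $C$ is manifestly nonzero.
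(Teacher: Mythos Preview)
Your outline is a natural first instinct, but it does not match any of the paper's three arguments, and the specific mechanism you propose has a structural obstacle you have not addressed. If $C$ sits on a K3-type surface $S$ as a member of $|L_S|$, adjunction forces $L_S|_C\cong K_C$, not $K_C\otimes\eta$; so the clean hyperplane-section comparison $K_{1,2}(S,L_S)\hookrightarrow K_{1,2}(C,L)$ you want is unavailable for the Prym-canonical bundle. On a Nikulin surface one has instead $C\in|H|$ with $K_C=H|_C$ and $\eta=M|_C$ for a second class $M$, so $L=(H+M)|_C$; but then $C$ is \emph{not} a hyperplane section for the linear system $|H+M|$, and the restriction exact sequences relating $K_{p,q}(S,H+M)$ to $K_{p,q}(C,L)$ become mixed Koszul groups with twist $H$ versus embedding line bundle $H+M$. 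You would need to control $H^1(S, q(H+M)-H)$ and then actually compute $K_{1,2}(S,H+M)$, neither of which you do; and there is no a priori reason the surface carries the extra syzygy, since the failure in genus $8$ is genuinely surprising and not predicted by the usual K3 heuristics.

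The paper's proofs are quite different. The first constructs, for each $[C,\eta]$, a reducible nodal spin curve $D=C\cup E$ of genus $22$ in $\PP^6$ with $E$ elliptic of degree $7$ and $\omega_D\cong\mathcal O_D(2)$; the condition $h^0(D,\vartheta)\geq 7$ cuts a skew-symmetric degeneracy locus of expected codimension $\binom{7}{2}=21$ in a $42$-dimensional parameter space, and a dimension count shows this dominates $\cR_8$, forcing a rank-$6$ syzygy on $C$. The second proof is intersection-theoretic on $\overline{\cR}_8$: the virtual Koszul divisor splits as $\mathfrak D_1+\mathfrak D_2$, the class $[\overline{\mathfrak D}_2]$ is computed explicitly, and a Nikulin-surface sweeping curve of $\Delta_0'$ shows $[\overline{\mathfrak D}_2]$ is not effective---so $\mathfrak D_2$ cannot be a divisor and $K_{1,2}$ is nonzero everywhere. (Nikulin surfaces appear here, but only to produce a test curve for the class computation, not as ambient surfaces for a Koszul restriction.) The third, conditional, proof is closest to your final sentence: it uses rank-$2$ bundles $E$ on $C$ with $\det E\cong K_C$ and $h^0(E)=h^0(E(\eta))=4$, pulls back the rank-$\leq 3$ determinantal quartic from $\PP(H^0(E)^\vee\otimes H^0(E(\eta))^\vee)$, and obtains a quartic in $\PP^6$ singular along $C$ but not in $\mathrm{Sym}^2 I_C(2)$---which by a reformulation in the paper is equivalent to $K_{1,2}(C,L)\neq 0$. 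Note the bundle lives on $C$, not on an ambient surface.
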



\vskip 4pt
We present three different proofs of Theorem \ref{theomainintro}.
The first proof, presented in Section \hyperref[secthirdproof]{3} uses the structure theorem already pointed out in \cite{chiodetal}
for degenerate syzygies of paracanonical curves in $\PP^6$. Precisely, if a paracanonical genus $8$ curve
$\phi_{K_C\otimes \eta}:C\hookrightarrow \PP^6$, where $\eta\neq \OO_C$, has a syzygy $0\neq \gamma \in K_{1,2}(C,K_C\otimes \eta)$
of \emph{sub-maximal rank} (see Section \hyperref[sec1]{2} for a precise definition), then the syzygy scheme of $\gamma$ consists of an isolated point $p\in \PP^6 \setminus C$
and a residual septic elliptic curve $E\subseteq \PP^6$ meeting $C$ transversally along a divisor $e$ of degree $14$, such that if $e$ is viewed as a divisor on $C$ and $E$ respectively, then
\begin{equation}\label{cond2}
e_C\in |K_C\otimes \eta^{\otimes 2}| \ \ \mbox{ and }\ \  e_E\in |\OO_E(2)|.
\end{equation}

The union $D:=C\cup E\hookrightarrow \PP^6$, endowed with the line bundle $\OO_D(1)$ is a degenerate spin curve of genus $22$ in the sense of \cite{Cor}. The locus of stable spin structures with at least $7$ sections defines a subvariety of codimension $21={7\choose 2}$ inside the moduli space $\overline{\mathcal{S}}_{22}^{-}$ of stable odd spin curves of genus $22$. By restricting this condition to the locus of spin structures having $D:=C\cup_e E$ as underlying curve, it turns out that one has enough parameters to realize this condition for a general $C\subseteq \PP^6$ if and only if
$$\mbox{dim} |K_C\otimes \eta^{\otimes 2}|=7,$$
which happens precisely when $\eta^{\otimes 2}\cong \OO_C$. Therefore for each Prym-canonical curve $C\subseteq \PP^6$ of genus $8$ there exists a corresponding elliptic curve $E\subseteq \PP^6$ such that the intersection divisor $E\cdot C$ verifies (\ref{cond2}), which forces $K_{1,2}(C,K_C\otimes \eta)\neq 0$.

\vskip 4pt

The second and the third proofs involve the reformulation given in Section \hyperref[subsecquartic]{2.B} (see
Proposition \ref{proquarticsing}) of the condition that a paracanonical curve $\phi_L:C\hookrightarrow \PP^6$  have a non-trivial syzygy.
Precisely, if $\phi_L(C)$ is scheme-theoretically generated by quadrics, then $K_{1,2}(C,L)\neq 0$, if and only if  there exists a quartic hypersurface in $\PP^6$ singular along $C\subseteq \PP^6$, which is  not a quadratic polynomial in quadrics vanishing along $C$, that is, it does not belong to the image of the multiplication map
$$\mbox{Sym}^2 I_{C,L}(2)\rightarrow I_{C,L}(4).$$
Equivalently, one has $H^1(\PP^6, \mathcal{I}_{C/\PP^6}^2(4))\neq 0$.

\vskip 5pt

The second proof presented in Section \hyperref[secfirstproof]{4} uses intersection theory on the stack $\rr_8$.  The virtual Koszul divisor
of Prym curves $[C,\eta]\in \cR_8$ having $K_{1,2}(C,K_C\otimes \eta)\neq 0$, splits into two divisors $\mathfrak{D}_1$ and $\mathfrak{D}_2$ respectively,  corresponding
to the case whether $C\subseteq \PP^6$ is not scheme-theoretically cut out by quadrics, or $H^1(\PP^6, \mathcal{I}_{C/\PP^6}^2(4))\neq 0$ respectively.
We determine the virtual classes of both closures $\overline{\mathfrak{D}}_1$ and $\overline{\mathfrak{D}}_2$. Using an explicit uniruled parametrization of $\rr_8$
constructed in \cite{FV}, we conclude that the class $[\overline{\mathfrak{D}}_2]\in CH^1(\rr_8)$ cannot possibly be effective (see Theorem \ref{theomain}). Therefore, again $K_{2,1}(C,K_C\otimes \eta)\neq 0$, for every
Prym curve $[C, \eta]\in \cR_8$.

\vskip 5pt

The third proof given in Section \hyperref[secsecondproof]{5} even though subject to a plausible, but still unproved transversality assumption, is constructive and potentially the most useful, for we feel it might offer hints to the case $g=16$ and further. The idea is to consider rank $2$ vector bundles $E$ on $C$ with canonical determinant and $h^0(C,E)=h^0(C, E(\eta))=4$. (Note that
 the condition that $\eta$ is $2$-torsion is equivalent to the fact
 that $E(\eta)$ also has canonical determinant, which is essential  for the existence of
 such nonsplit vector bundles, cf. \cite{mumford}.) By pulling back to $C$ the determinantal quartic hypersurface consisting of rank $3$ tensors in $$\PP\Bigl (H^0(C,E)^{\vee}\otimes H^0(C,E(\eta))^{\vee}\Bigr)\cong \PP^{15}$$
under the natural map $H^0(C,K_C\otimes \eta)^{\vee}\rightarrow H^0(C,E)^{\vee}\otimes H^0(C,E(\eta))^{\vee}$, we obtain explicit
quartic hypersurfaces singular along the curve $C\subseteq \PP^6$.  Our proof
that these are not quadratic polynomials into quadrics vanishing along the curve, that is, they do not lie in the image of $\mbox{Sym}^2 I_{C,L}(2)$ remains incomplete,
but there is a lot of  evidence for this.

\vskip 3pt
The methods of Section \hyperref[secsecondproof]{5} suggests the following analogy in the next  case $g=16$. If $[C,\eta]\in \cR_{16}$ is a Prym curve of genus $16$, there exist vector bundles $E$ on $C$ with $\mbox{det } E\cong K_C$ and satisfying $h^0(C, E)=h^0(C, E(\eta))=6$. Potentially they could be used to prove that $K_{5,2}(C,K_C\otimes \eta)\not=0$ and thus confirm the next exception to the Prym-Green Conjecture.

\section{Syzygies of paracanonical curves of genus 8}\label{sec1}

Let $C$ be a general smooth projective curve of genus $8$.  For a non-trivial line bundle  $\eta\in \mbox{Pic}^0(C)$,
we shall study the \emph{paracanonical} line bundle $L:=K_C\otimes \eta$. When $\eta$ is a $2$-torsion point, we speak of the
\emph{Prym-canonical} line bundle $L$. For each paracanonical bundle $L$,
we have $h^0(C,L)=7$ and an induced embedding
$$\phi_L: C\hookrightarrow \PP^6.$$  The goal is to understand
the reasons for the non-vanishing of the Koszul group $K_{1,2}(C,L)$ of a Prym-canonical bundle $L$, as suggested experimentally by
the results of \cite{chiodetal}, \cite{cofre}.

\vskip 4pt

Let $I_C(2)=I_{C,L}(2)\subseteq  H^0(\PP^6,\mathcal{O}_{\PP^6}(2))$, respectively $I_C(3)=I_{C,L}(3)\subseteq  H^0(\PP^6,\mathcal{O}_{\PP^6}(3))$ be the ideal of quadrics, respectively cubics, vanishing
on $\phi_L(C)$. It is well-known that whenever $L$ is projectively normal, the non-vanishing of the Koszul cohomology group $K_{1,2}(C,L)$ is equivalent to the non-surjectivity of the multiplication map
\begin{eqnarray}\label{eqmulti} \mu_{C,L}: H^0(\PP^6,\OO_{\PP^6}(1))\otimes I_C(2)\rightarrow I_C(3).
\end{eqnarray}

Note that
$$\mbox{ dim }\,I_C(2)={8 \choose 2}-21=7,\ \ \mbox{ and } \ \ {\rm dim}\,I_C(3)={9\choose 3}-3\cdot 14+7=49,$$
respectively, so that the two spaces appearing in the map (\ref{eqmulti}) have the same dimension. Denote by $P^{14}_8$ the universal degree $14$ Picard variety over
$\cM_8$ consisting of pairs $[C,L]$, where $[C]\in \cM_8$ and $L\neq K_C$. The jumping locus
$$\mathfrak{Kosz}:=\Bigl\{[C,L]\in P^{14}_8: K_{1,2}(C,L)\neq 0\Bigr\}$$
is a divisor. It turns out, cf. Theorem 5.3 of \cite{chiodetal} and Proposition \ref{splitting}, that $\mathfrak{Kosz}$ splits into two components depending on the \emph{rank} of the corresponding non-zero syzygy from $K_{1,2}(C,L)$.

\vskip 3pt
\begin{defi}{\rm{
The rank of a non-zero syzygy  $\gamma=\sum_{i=0}^6 \ell_i\otimes q_i\in \mbox{Ker}(\mu_{C,L})$ is the dimension of the subspace $\langle \ell_0, \ldots, \ell_6\rangle \subseteq H^0(\PP^6, \OO_{\PP^6}(1))$.
The syzygy scheme $\mbox{Syz}(\gamma)$ of $\gamma$ is the largest subscheme $Y\subseteq \PP^6$ such that $\gamma\in H^0(\PP^6, \OO_{\PP^6}(1))\otimes I_Y(2)$.}}
\end{defi}

It is shown in \cite{chiodetal}, that $\mathfrak{Kosz}$ splits into divisors $\mathfrak{Kosz}_6$ and $\mathfrak{Kosz}_7$, depending on whether the syzygy $0\neq \gamma \in \mbox{Ker}(\mu_{C,L})$ has rank $6$ or $7$ respectively. By a specialization argument to irreducible nodal curves, it follows from \cite{chiodetal} that $\cR_{8}\nsubseteq \mathfrak{Kosz}_7$. A direct, more transparent proof of this fact will be given in Proposition \ref{prodeg}.

\subsection{Paracanonical curves of genus $8$ with special syzygies and elliptic curves}
\label{subsecelliptic}

We summarize a few facts already stated or recalled in Section 5 of \cite{chiodetal} concerning rank $6$ syzygies of paracanonical curves in $\PP^6$. Very generally, let
$$\gamma= \sum_{i=1}^6 \ell_i\otimes q_i\in H^0(\PP^6,\OO_{\PP^6}(1))\otimes H^0(\PP^6,\OO_{\PP^6}(2))$$ be a rank 6 linear syzygy among quadrics in $\PP^6$.  The linear forms $\ell_1, \ldots, \ell_6$ define a  point $p\in \PP^6$. Following Lemma 6.3 of \cite{Sc}, there exists
a skew-symmetric matrix of linear forms $A:=(a_{ij})_{i,j=1, \ldots, 6}$, such that  $$q_i=\sum_{j=1}^6 \ell_j a_{ij}.$$
In the space $\PP^{20}$ with coordinates $\ell_1, \ldots, \ell_6$ and $a_{ij}$ for $1\leq i<j\leq 6$, one considers the $15$-dimensional variety $X_6$  defined by the $6$ quadratic equations $\sum_{j=1}^6 \ell_j a_{ij}=0$, where $i=1, \ldots, 6$ and by the cubic equation $\mbox{Pfaff}(A)=0$ in the variables $a_{ij}$.
The original space $\PP^6$ embeds in $\PP^{20}$ via evaluation. The syzygy scheme $\mbox{Syz}(\gamma)$ is the union of the point $p$ and of the intersection $D$ of $\PP^6$ with the variety $X_6$. It follows from Theorem 4.4 of \cite{ELMS}, that for a general rank $6$ syzygy $\gamma$ as above, $D\subseteq \PP^6$ is a smooth curve of genus $22$  and degree $21$ such that $\OO_D(1)$ is a theta characteristic.

\vskip 4pt

In the case at hand, that is, when $[C,L]\in \mathfrak{Kosz}_6$, the curve $D$ must be reducible, for it has $C$ as a component. More precisely:

\begin{lemm}\label{DCE} For a general paracanonical curve $C\subseteq \PP^6$ having a rank $6$ syzygy, the curve $D$ is nodal and consists of two components $C\cup E$, where
$E\subseteq \PP^6$ is an elliptic septic curve. Furthermore,  $\OO_D(2)=\omega_D$.
The intersection  $e:=C\cdot E$, viewed as a divisor on $C$ satisfies $e_C\in |\OO_C(2)\otimes K_C^{\vee}|$, and as a divisor on $E$, satisfies $e_E\in |\OO_E(2)|$.
\end{lemm}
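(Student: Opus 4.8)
The plan is to analyze the structure of the curve $D = \mathrm{Syz}(\gamma) \setminus \{p\}$, using the general facts recalled just above: for a general rank $6$ syzygy the syzygy scheme is a smooth genus $22$, degree $21$ curve with $\mathcal{O}(1)$ a theta characteristic, but here $D$ must be reducible since it contains $C$ as a component. First I would show that the residual curve $E$ (the closure of $D \setminus C$) is an elliptic septic. For this I would compute the genus and degree of $D$ from the fact that $\mathcal{O}_D(1)$ is a theta characteristic of a curve of arithmetic genus $22$ embedded by a half-canonical linear system; since $D$ has degree $21$ and $C$ has degree $14$, the residual $E$ has degree $7$, and lies in $\mathbb{P}^6$, so it spans (at most) the whole space; an irreducible nondegenerate septic in $\mathbb{P}^6$ must have arithmetic genus $\le 1$ by Castelnuovo, and genus $0$ is excluded because a rational normal septic lives in $\mathbb{P}^7$, forcing $E$ to be an elliptic normal septic. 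I would also need to rule out further degeneration of $E$ (reducibility, non-reducedness) for the \emph{general} paracanonical curve with a rank $6$ syzygy — here an upper-semicontinuity/dimension count on the incidence variety of (paracanonical curve, rank $6$ syzygy) pairs, combined with the smoothness of the generic $D$ from \cite{ELMS}, should do it, and the transversality of the intersection $C \cap E$ likewise follows from the smoothness of the nodes of the generic $D$.

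Next I would pin down the line bundle: since $\mathcal{O}_D(1)$ is a theta characteristic on the nodal curve $D = C \cup_e E$, one has $\mathcal{O}_D(2) = \omega_D$, which is the displayed identity. The content is then to restrict this identity to each component. By adjunction on a nodal curve, $\omega_D|_C = \omega_C(e_C)$ and $\omega_D|_E = \omega_E(e_E) = \mathcal{O}_E(e_E)$, where $e_C$ and $e_E$ denote the intersection divisor viewed on $C$ and on $E$ respectively (using that the intersection is transverse, $\deg e_C = \deg e_E = 14$). Restricting $\mathcal{O}_D(2) = \omega_D$ gives $\mathcal{O}_C(2) = K_C(e_C)$, i.e. $e_C \in |\mathcal{O}_C(2) \otimes K_C^\vee|$, and $\mathcal{O}_E(2) = \mathcal{O}_E(e_E)$, i.e. $e_E \in |\mathcal{O}_E(2)|$. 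This matches the claimed conditions, and a quick degree check confirms consistency: $\deg \mathcal{O}_C(2) = 28$ and $\deg K_C = 14$, so $\deg e_C = 14$; likewise $\deg \mathcal{O}_E(2) = 14 = \deg e_E$.

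I expect the main obstacle to be the first step — establishing that for the \emph{general} member of $\mathfrak{Kosz}_6$ the residual curve $E$ is honestly a smooth irreducible elliptic septic and that $C \cup E$ is nodal, rather than something more degenerate. The clean facts about the generic syzygy scheme come from \cite{ELMS} and \cite{ELMS}-style arguments over the full parameter space $X_6 \cap \mathbb{P}^6$, but one has to transport them to the sublocus where the syzygy scheme contains a fixed general paracanonical curve $C$, which requires knowing that this sublocus is itself general enough within the relevant incidence correspondence. Concretely, I would set up the incidence variety $\{(C, \gamma) : \gamma \in \mathrm{Ker}(\mu_{C,L}) \text{ of rank } 6\}$ over (an open part of) $P^{14}_8$, note it dominates $\mathfrak{Kosz}_6$, and argue that a general point of it maps to a general point of the $\mathrm{ELMS}$ moduli of half-canonical genus $22$ curves intersected with the (codimension-one, by the $\mathfrak{Kosz}_6$ description) locus of curves splitting off a genus $8$ component — on which the smoothness and nodality statements are generic. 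Once that dimension bookkeeping is in place, the remaining computations are the routine adjunction identities above.
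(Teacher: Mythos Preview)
The paper does not actually prove this lemma: it is stated without proof, as part of a summary of facts drawn from Section~5 of \cite{chiodetal} (see the opening sentence of Section~\ref{subsecelliptic}). So there is no in-paper argument to compare against, and your task was really to reconstruct the proof.

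Your outline is essentially correct, but the logical order in your first paragraph has a small gap. Your exclusion of the rational case (``a rational normal septic lives in $\PP^7$'') only rules out rational \emph{normal} curves; a smooth rational curve of degree $7$ obtained by projecting the rational normal curve from $\PP^7$ is perfectly nondegenerate in $\PP^6$, so Castelnuovo alone does not force $p_a(E)=1$. Fortunately the detour through Castelnuovo is unnecessary, and your second paragraph already contains the right argument. Once $\omega_D\cong\OO_D(2)$ is established (either from the Buchsbaum--Eisenbud shape of the resolution of the syzygy scheme, which gives it directly on every fibre, or by flat limit from the smooth ELMS case), restricting to $C$ gives $\OO_C(e_C)\cong \OO_C(2)\otimes K_C^{\vee}$, hence $\deg e=14$; then the nodal arithmetic-genus formula
\[
p_a(D)=p_a(C)+p_a(E)+\#(C\cap E)-1
\]
forces $p_a(E)=22-8-14+1=1$. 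This pins down the genus of $E$ without appeal to Castelnuovo. The remaining assertions (irreducibility and smoothness of $E$, nodality of $C\cup E$) for a \emph{general} point of $\mathfrak{Kosz}_6$ are, as you correctly identify, an openness/dimension argument on the incidence correspondence, and your sketch of that step is adequate.
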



\begin{rema} {\rm Note that $C$ is Prym-canonical or canonical if and only if $e_C\in |K_C|$.}
\end{rema}

The construction above is reversible. Firstly, general element $[C,L]\in \mathfrak{Kosz}_6$ can be reconstructed as the residual curve of a reducible spin curve $D\subseteq \PP^6$ of genus $22$  containing an elliptic curve $E\subseteq \PP^6$ with $\mbox{deg}(E)=7$ as a component such that the union of $D$ and some point $p\in \PP^6\setminus E$ is the syzygy scheme of a rank 6 linear syzygy among quadrics in $\PP^6$.

\vskip 4pt

Furthermore, given a reducible spin curve $D=C\cup_e E\subseteq \PP^6$ of genus $22$ as above, that is, with $\omega_D\cong \OO_D(2)$,
 the genus $8$  component $C$ has a nontrivial syzygy of rank $6$ involving the quadrics in the $6$-dimensional subspace $I_{D}(2)\subseteq I_C(2)$, see Lemma \ref{ID} for a proof
 of this fact.

\subsection{Syzygies and  quartics singular along paracanonical curves}
\label{subsecquartic}
We first discuss an alternative characterization of the non-surjectivity of the map $\mu_{C,L}$:

\begin{prop}\label{proquarticsing} Assume the paracanonical curve $\phi_L(C)$ is projectively normal
 and scheme-theoretically cut out by quadrics. Then $K_{1,2}(C,L)\neq0$ if and only if there exists a degree $4$ homogeneous polynomial
on $\PP^6$, which vanishes to order at least $2$ along $C$ but does not belong
to the image of the multiplication map
${\rm Sym}^2 I_{C,L}(2)\rightarrow I_{C,L}(4)$.
\end{prop}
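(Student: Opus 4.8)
The plan is to rewrite both conditions of the Proposition as the non‑surjectivity of a natural multiplication map, to realize the two cokernels as first cohomology groups of coherent sheaves on $\PP^{6}$, and to match these via the conormal sheaf of $C$. For the translation: since $C$ is smooth, the symbolic square $\mathcal{I}_{C/\PP^{6}}^{(2)}$ agrees with $\mathcal{I}_{C/\PP^{6}}^{2}$, so the quartics vanishing to order at least $2$ along $C$ are exactly the elements of $H^{0}(\PP^{6},\mathcal{I}_{C/\PP^{6}}^{2}(4))$; and since a product of two quadrics through $C$ vanishes doubly along $C$, the image of $\mathrm{Sym}^{2}I_{C,L}(2)\to I_{C,L}(4)$ lies inside $H^{0}(\PP^{6},\mathcal{I}_{C/\PP^{6}}^{2}(4))$. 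Hence the right‑hand side is equivalent to the non‑surjectivity of $\mathrm{Sym}^{2}I_{C,L}(2)\to H^{0}(\PP^{6},\mathcal{I}_{C/\PP^{6}}^{2}(4))$. By projective normality, $K_{1,2}(C,L)\cong\mathrm{coker}(\mu_{C,L})$ (as recalled in the text), so the left‑hand side says that $\mu_{C,L}$ is not surjective. Thus the statement to prove is that $\mu_{C,L}$ fails to be onto if and only if $\mathrm{Sym}^{2}I_{C,L}(2)\to H^{0}(\PP^{6},\mathcal{I}_{C/\PP^{6}}^{2}(4))$ does.

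Next, both failures become $H^{1}$'s. Since $\phi_{L}(C)$ is scheme‑theoretically cut out by quadrics, $\mathcal{I}_{C/\PP^{6}}$ is generated by $I_{C,L}(2)\otimes\OO_{\PP^{6}}(-2)$; writing $\mathcal{S}$ for the syzygy sheaf one has $0\to\mathcal{S}\to I_{C,L}(2)\otimes\OO_{\PP^{6}}\to\mathcal{I}_{C/\PP^{6}}(2)\to0$, and twisting by $\OO_{\PP^{6}}(1)$ and using $H^{1}(\OO_{\PP^{6}}(1))=0$ together with $H^{0}(\mathcal{I}_{C/\PP^{6}}(3))=I_{C,L}(3)$ (projective normality) gives $\mathrm{coker}(\mu_{C,L})\cong H^{1}(\PP^{6},\mathcal{S}(1))$. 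Likewise $\mathcal{I}_{C/\PP^{6}}^{2}$ is generated by $\mathrm{Sym}^{2}I_{C,L}(2)\otimes\OO_{\PP^{6}}(-4)$, so the cokernel on the right‑hand side is $H^{1}(\PP^{6},\mathcal{N})$ for the corresponding kernel sheaf $\mathcal{N}$; and the conormal sequence $0\to\mathcal{I}_{C/\PP^{6}}^{2}(4)\to\mathcal{I}_{C/\PP^{6}}(4)\to N_{C/\PP^{6}}^{\vee}\otimes L^{\otimes4}\to0$ combined with $H^{1}(\mathcal{I}_{C/\PP^{6}}(4))=0$ yields $H^{1}(\PP^{6},\mathcal{I}_{C/\PP^{6}}^{2}(4))\cong\mathrm{coker}\bigl(I_{C,L}(4)\to H^{0}(C,N_{C/\PP^{6}}^{\vee}\otimes L^{\otimes4})\bigr)$.

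The core step is then to show that $H^{1}(\mathcal{S}(1))$ and $H^{1}(\mathcal{N})$ vanish simultaneously. I would run this through the conormal bundle: applying $-\otimes_{\OO_{\PP^{6}}}\OO_{C}$ to the quadric presentation of $\mathcal{I}_{C/\PP^{6}}$, and using that $C$ is a local complete intersection, so $\mathcal{I}_{C/\PP^{6}}\otimes\OO_{C}=N_{C/\PP^{6}}^{\vee}$ and $\mathcal{T}or_{1}(\mathcal{I}_{C/\PP^{6}},\OO_{C})=\mathcal{T}or_{2}(\OO_{C},\OO_{C})=\wedge^{2}N_{C/\PP^{6}}^{\vee}$, one gets the exact sequence on $C$
$$0\to\wedge^{2}N_{C/\PP^{6}}^{\vee}\otimes L^{\otimes2}\to\mathcal{S}\otimes\OO_{C}\to I_{C,L}(2)\otimes\OO_{C}\to N_{C/\PP^{6}}^{\vee}\otimes L^{\otimes2}\to0,$$
which, chased against the presentation of $\mathcal{I}_{C/\PP^{6}}^{2}$ and the hyperplane sequences reconciling the twists of $\mathcal{S}$, should identify the two cokernels. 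The needed vanishings are all automatic for a projectively normal paracanonical curve of genus $8$: $H^{1}(\PP^{6},\mathcal{I}_{C/\PP^{6}}(k))=0$ for $k\ge2$ (so $\mathcal{I}_{C/\PP^{6}}$ is $3$‑regular, giving $K_{1,3}(C,L)=0$ and $H^{i}(\PP^{6},\mathcal{S})=0$ for $0\le i\le3$); $H^{1}(C,K_{C}^{\otimes2}\otimes\eta^{\otimes j})=0$ for $j\ge0$; and $H^{1}(C,N_{C/\PP^{6}}^{\vee}\otimes L^{\otimes4})=0$, the last because its Serre dual $H^{0}(C,N_{C/\PP^{6}}\otimes K_{C}\otimes L^{\otimes(-4)})$ has negative degree. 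Combining the three identifications then yields $K_{1,2}(C,L)\ne0$ if and only if $\mathrm{Sym}^{2}I_{C,L}(2)\to H^{0}(\PP^{6},\mathcal{I}_{C/\PP^{6}}^{2}(4))$ is not surjective (and, in the Prym case, if and only if $H^{1}(\PP^{6},\mathcal{I}_{C/\PP^{6}}^{2}(4))\ne0$), which is the Proposition.

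The main obstacle is precisely this last comparison. Two features have to be controlled: the $\mathcal{T}or$‑terms — the exterior powers of the conormal bundle appearing on restricting the quadric presentation to $C$ — must be shown not to contribute spurious cohomology, so that the natural maps between the cokernels are isomorphisms and not merely maps whose source and target happen to vanish together; and there is a genuine shift by one in homological degree in passing from $\mathcal{I}_{C/\PP^{6}}$ to its square (a cubic syzygy among the quadrics becomes a quartic one among their products), which must be absorbed using $K_{1,3}(C,L)=0$. Once these are arranged, what remains is routine bookkeeping with the exact sequences above.
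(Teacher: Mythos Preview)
Your reductions in the first two paragraphs are correct: both sides become the non-surjectivity of a multiplication map, and the cokernels can be read as $H^1$'s of suitable sheaves. But the proof stalls exactly where you say it does. The ``core step'' --- identifying $\mathrm{coker}(\mu_{C,L})$ with $\mathrm{coker}\bigl(\mathrm{Sym}^2 I_{C,L}(2)\to I_{C,L}(4)_2\bigr)$ --- is not carried out; you write that the Tor sequence on $C$, ``chased against'' other sequences, ``should'' identify them, but no actual mechanism is supplied, and it is not clear how restriction to the curve can relate two $H^1$ groups on $\PP^6$ sitting in different twists. (A side remark: your claimed vanishing $H^1(C,N_{C/\PP^6}^{\vee}\otimes L^{\otimes 4})=0$ does not follow from ``negative degree of the Serre dual'', since $N_{C/\PP^6}$ has rank $5$; negative total degree does not preclude sections of a higher-rank bundle without a stability input.)

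The paper resolves this by a genuinely different route, and the key point is that the identification is a \emph{duality}, not an equality. One passes to the blow-up $\tau:X\to\PP^6$ along $C$, with exceptional divisor $E$, and sets $H:=\tau^*\OO(2)(-E)$; the hypothesis that $C$ is cut out by quadrics says precisely that $H$ is globally generated by $W:=I_{C,L}(2)$, so there is a Koszul resolution of $\OO_X$ of length $7$ in powers of $H$. Twisting by $\tau^*\OO(1)\otimes H$ and running the hypercohomology spectral sequence, elementary vanishing (Lemma~\ref{levan}) leaves only $E_2^{0,0}=\mathrm{coker}(\mu_{C,L})$ in total degree $0$ and a single term $E_2^{-7,6}$ in total degree $-1$; Serre duality on the $6$-fold $X$ (using $K_X=\tau^*\OO(-7)(4E)$) identifies the latter with the dual of $\mathrm{coker}\bigl(I_{C,L}(2)\otimes I_{C,L}(2)\to I_{C,L}(4)_2\bigr)$. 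The only surviving differential $d_7$ is then forced to be an isomorphism between them. The passage through $H^6$ on a $6$-dimensional variety is what produces the pairing, and this is exactly the hidden ingredient that your ideal-sheaf chase on $\PP^6$ and $C$ does not provide.
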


\begin{proof}
We work on the variety $X\stackrel{\tau}{\rightarrow} \PP^6$ defined as the blow-up of $\PP^6$ along $\phi_L(C)$.
Let $E$ be the exceptional divisor of the blow-up, and consider the line bundle
$H:=\tau^*\mathcal{O}_{\PP^6}(2)(-E)$ on $X$. Its space of sections identifies to $I_C(2)$, and our assumption that $C$ is scheme-theoretically cut out by quadrics says equivalently that
$H$ is a  globally generated  line bundle on $X$. The nonvanishing of $K_{1,2}(C,L)$ is equivalent to the non-surjectivity
of the
multiplication map
\begin{eqnarray}\label{eqmult25jan1}
H^0(X,H)\otimes H^0(X,\tau^*\mathcal{O}(1))\rightarrow H^0(X,H\otimes \tau^*\mathcal{O}(1)),
\end{eqnarray}
where we use the identification
$$H^0\bigl(X,H\otimes \tau^*\mathcal{O}(1)\bigr)=H^0\bigl(X,\tau^*\mathcal{O}(3)(-E)\bigr)=I_C(3).$$
As $H$ is globally generated by its space $W:=I_C(2)$ of global sections,
 the Koszul complex
\begin{eqnarray}\label{eqkoscom}
0\rightarrow\bigwedge^7 W\otimes\mathcal{O}_X(-7H)\rightarrow \ldots\rightarrow
\bigwedge^2W \otimes\mathcal{O}_X(-2H)\rightarrow W\otimes \mathcal{O}_X(-H)\rightarrow
  \mathcal{O}_X\rightarrow0
\end{eqnarray}
is exact.
We now twist this complex by $\tau^*\mathcal{O}_{\PP^6}(1)(H)$ and take global sections. The last
map is then the multiplication map (\ref{eqmult25jan1}).
The successive terms of this twisted complex are
$$\bigwedge^ iW\otimes\mathcal{O}_X\bigl(\tau^*\mathcal{O}(1)\bigr)((-i+1)H),$$
for $0\leq i\leq 7$.
The spectral sequence abutting
to the hypercohomology of this complex, that is $0$, has
\begin{eqnarray}
\label{E200} E_2^{0,0}={\rm coker}\,\Bigl\{W\otimes H^0(X,\tau^*\mathcal{O}(1))\rightarrow H^0(X,H\otimes \tau^*\mathcal{O}(1))\Bigr\}\end{eqnarray}
and the terms
$E_1^{i,-i-1}$ for $i< -1$ are equal to
$\bigwedge^{-i}W\otimes H^{-i-1}\bigl(X,\tau^*\mathcal{O}(1)((i+1)H)\bigr)$.
Similarly, we have
$$E_1^{i,-i}=\bigwedge^{-i}W\otimes H^{-i}\bigl(X,\tau^*\mathcal{O}(1)((i+1)H)\bigr).$$

\begin{lemm} \label{levan}
\begin{itemize}
\item[\rm (i)]  We have
\begin{eqnarray}\label{eqcohgroup} E_1^{i,-i-1}=\bigwedge^{-i}W\otimes H^{-i-1}\bigl(X,\tau^*\mathcal{O}(1)((i+1)H)\bigr)=0,
\end{eqnarray}
for $-i-1=5,\ldots, 1$.
\item[\rm (ii)]
For $-i-1=6$, that is, $i=-7$, we have
\begin{eqnarray}\label{eqcohgroup2}E_1^{-7,6}=\bigwedge^7W\otimes H^{6}\bigl(X,\tau^*\mathcal{O}(1)(-6H)\bigr)=
\bigwedge^7W\otimes I_C(4)_2^{\vee}
,\end{eqnarray}
 where $I_C(4)_2\subseteq I_C(4)$
is the set of quartic polynomials vanishing at order at least $2$ along $C$,
and
\begin{eqnarray}\label{eqcohgroup3}E_1^{-6,6}=\bigwedge^6W\otimes H^{6}\bigl(X,\tau^*\mathcal{O}(1)(-5H)\bigr)=
\bigwedge^6W\otimes I_C(2)^{\vee}.
\end{eqnarray}
\item[\rm (iii)] We have $E_1^{i,-i}=0$, for $-6<i<0$.
\end{itemize}
\end{lemm}
\begin{proof}[Proof of Lemma \ref{levan}] (i) We want equivalently to show that
$$H^{\ell}(X,\tau^*\mathcal{O}(1)(-\ell H))=0,\,\,\mbox{ when } \ell=5,\ldots, 1.$$
Recall that
$H=\tau^*\mathcal{O}(2)(-E)$. Furthermore,
\begin{eqnarray}\label{eqK}
K_X=\tau^*\mathcal{O}_{\PP^6}(-7)(4E).
\end{eqnarray}
So we have to prove that
\begin{eqnarray}\label{eqcohgroup16feb}H^{\ell}\bigl(X,\tau^*\mathcal{O}(-2\ell+1)(\ell E)\bigr)=0,\,\mbox{ for }  \ell=5,\ldots, 1.
\end{eqnarray}
Examining the spectral sequence induced by  $\tau$, and using the fact that
$$R^s\tau_*(\mathcal{O}_X(tE))=0$$
for $s\not=0,\,4$ and also for $s=4,\,t\leq4$,
we see that for $1\leq \ell\leq 4$,
$$H^{\ell}\bigl(X,\tau^*\mathcal{O}(-2\ell+1)(\ell E)\bigr)=H^\ell\bigl(\PP^6,\mathcal{O}(-2\ell+1)\otimes R^0\tau_*\mathcal{O}_X(\ell E)\bigr).$$
For $1\leq \ell\leq 4$, the right hand side is zero, because it is equal to $H^\ell\bigl(\PP^6, \mathcal{O}(-2\ell+1)\bigr)$.

\vskip 4pt

For $\ell=5$, we have  to compute the space
$H^5(X,\tau^*\mathcal{O}(-9)(5E))$, which by Serre duality and by (\ref{eqK}),
is dual to the  space
$$H^1(X,\tau^*\mathcal{O}(2)(-E))=H^1(\PP^6,\mathcal{O}(2)\otimes\mathcal{I}_C)=0.$$

(ii) We have to compute the spaces
$H^{6}(X,\tau^*\mathcal{O}(1)(-6H))$ and $H^{6}(X,\tau^*\mathcal{O}(1)(-5H))$.
As $H:=\tau^*\mathcal{O}(2)(-E)$, this is rewritten as
$H^{6}(X,\tau^*\mathcal{O}(-11)(6E))$ and $H^{6}(X,\tau^*\mathcal{O}(-9)(5E))$ respectively.
If we dualize using (\ref{eqK}), we get
$$H^{6}\bigl(X,\tau^*\mathcal{O}(-11)(6E)\bigr)^{\vee}=H^0\bigl(X,\tau^*\mathcal{O}(4)(-2E)\bigr)=I_C(4)_2,$$
$$H^{6}\bigl(X,\tau^*\mathcal{O}(-9)(5E)\bigr)^{\vee}=H^0\bigl(X,\tau^*\mathcal{O}(2)(-E)\bigr)=I_C(2).$$
(iii) We have
$$E_1^{i,-i}=E_1^{-6,6}=\bigwedge^{-i}W\otimes H^{-i}\bigl(X,\tau^*\mathcal{O}(1)((i+1)H)\bigr)=
\bigwedge^{-i}W\otimes H^{-i}\bigl(X,\tau^*\mathcal{O}(2i+3)((-i-1)E)\bigr).$$
For $1\leq - i\leq 5$, we have $R^s\tau_*\mathcal{O}_X\bigl((-i-1)E\bigr)=0$ unless $s=0$. Furthermore,
we have $$R^0\tau_*\mathcal{O}_X\bigl((-i-1)E\bigr)=\mathcal{O}_{\PP^6},$$ so that
$$H^{-i}\bigl(X,\tau^*\mathcal{O}(2i+3)((-i-1)E)\bigr)=H^{-i}\bigl(\PP^6,\mathcal{O}_{\PP^6}(2i+3)\bigr)=0.$$
\hfill $\Box$
\end{proof}

\begin{coro}\label{corovane2} Only one  $E_2^{p,q}$-terms of this spectral sequence is possibly nonzero
in degree $-1$, namely
\begin{eqnarray}\label{E276}E_2^{-7,6}={\rm Ker}\Bigl\{\bigwedge^7W\otimes  I_C(4)_2^{\vee}\rightarrow
\bigwedge^6W\otimes I_C(2)^{\vee}\Bigr\}.
\end{eqnarray}
Furthermore,
all the differentials $d_r$ starting from $E_2^{-7,6}$ vanish for $2\leq r<7$.
\end{coro}

Note that the map
$$\bigwedge^7W\otimes  I_C(4)_2^{\vee}\rightarrow
\bigwedge^6W\otimes I_C(2)^{\vee}$$ is nothing but the transpose of the multiplication map
$$W\otimes I_C(2)\rightarrow I_C(4)_2,$$
up to trivialization of $\bigwedge^7W$. It follows that
\begin{eqnarray}\label{E276dual}(E_2^{-7,6})^{\vee}={\rm Coker}\Bigl\{W\otimes
I_C(2)  \rightarrow I_C(4)_2\Bigr\}.
\end{eqnarray}
Corollary \ref{corovane2} concludes the  proof of the proposition since it implies that
 we have an isomorphism given by
$d_7$ between (\ref{E276}) and (\ref{E200}), or a perfect duality between
(\ref{E276}) and the cokernel (\ref{E276dual}).
\hfill $\Box$
\end{proof}

Proposition \ref{proquarticsing}
has the following consequence. Recall that $P_8^{14}$ is the moduli space of pairs
$[C,L]$, with $C$ being a smooth curve of genus $8$ and $L\neq K_C$ a paracanonical line bundle.

\begin{prop}\label{splitting} The Koszul divisor $\mathfrak{Kosz}$ of $P_8^{14}$  is the union of two divisors, one of them being the set of pairs
$[C,L]$ such that $\phi_L(C)$ is not scheme-theoretically cut out by
quadrics, the other being the set of pairs $[C,L]$ such that $H^1(\PP^6,\mathcal{I}_C^2(4))\not=0$, or equivalently, such that
there exists a quartic  which is singular along $\phi_L(C)$ but does not lie in
${\rm Sym}^2 I_C(2)$.
\end{prop}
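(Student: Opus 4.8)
The plan is to split the Koszul divisor $\mathfrak{Kosz}$ according to whether or not $\phi_L(C)$ is scheme-theoretically cut out by quadrics, and then to apply Proposition \ref{proquarticsing} on the locus where it is. First I would observe that the locus
$$\mathfrak{D}_1:=\bigl\{[C,L]\in P_8^{14}: \phi_L(C)\text{ is not scheme-theoretically cut out by quadrics}\bigr\}$$
is closed in $P_8^{14}$, and I would argue it has codimension at most one by exhibiting (via the rank $6$ syzygy / reducible spin curve picture of Section \hyperref[subsecelliptic]{2.A}, or by a direct tangent-space count) that its generic member carries a nontrivial syzygy, so $\mathfrak{D}_1\subseteq\mathfrak{Kosz}$; since $\mathfrak{Kosz}$ is a divisor, each irreducible component of $\mathfrak{D}_1$ contained in $\mathfrak{Kosz}$ is itself a divisor (or $\mathfrak{D}_1$ is empty of codimension $\geq 2$, in which case it does not appear).

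Next, on the open complement $U:=P_8^{14}\setminus\mathfrak{D}_1$, the hypotheses of Proposition \ref{proquarticsing} are satisfied — paracanonical bundles on a general genus $8$ curve are projectively normal, and on $U$ the curve is scheme-theoretically cut out by quadrics. Hence for $[C,L]\in U$ we have the equivalence
$$K_{1,2}(C,L)\neq 0 \iff H^1(\PP^6,\mathcal{I}_{C/\PP^6}^2(4))\neq 0,$$
the latter being by (\ref{E276dual}) the condition that $\mathrm{Sym}^2 I_C(2)\to I_C(4)_2$ fails to be surjective, i.e. that there is a quartic singular along $\phi_L(C)$ not lying in $\mathrm{Sym}^2 I_C(2)$. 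Therefore $\mathfrak{Kosz}\cap U$ coincides with the second locus
$$\mathfrak{D}_2:=\bigl\{[C,L]\in U: H^1(\PP^6,\mathcal{I}_{C}^2(4))\neq 0\bigr\},$$
and taking closures in $P_8^{14}$ gives $\mathfrak{Kosz}=\overline{\mathfrak{D}}_1\cup\overline{\mathfrak{D}}_2$, as claimed. (If $\mathfrak{D}_1$ has codimension $\geq 2$ then $\mathfrak{Kosz}=\overline{\mathfrak{D}}_2$ and one can still phrase the statement as a union with $\overline{\mathfrak{D}}_1$.)

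The main obstacle is the bookkeeping needed to guarantee that $\mathfrak{Kosz}$ is genuinely the union of \emph{two} divisorial pieces rather than being swallowed entirely by one of them, and in particular to make sure $\overline{\mathfrak{D}}_1$ is pure of codimension one whenever it is nonempty — this is where one must invoke the structure theory from \cite{chiodetal} and Lemma \ref{DCE} to produce, on the quadric-deficient locus, an honest nontrivial syzygy together with enough deformation parameters. Once that point is granted, the rest is a formal consequence of Proposition \ref{proquarticsing} together with the identification (\ref{E276dual}) of the cokernel of $W\otimes I_C(2)\to I_C(4)_2$ with $H^1(\PP^6,\mathcal{I}_C^2(4))$ (via the dualizations using (\ref{eqK}) carried out in Lemma \ref{levan}), so I would present that part briefly and spend the bulk of the argument on the dimension count for $\mathfrak{D}_1$.
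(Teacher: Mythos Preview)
Your overall shape is right --- split $\mathfrak{Kosz}$ into the quadric-deficient locus $\mathfrak{D}_1$ and its complement, then invoke Proposition~\ref{proquarticsing} --- but the argument you propose for the inclusion $\mathfrak{D}_1\subseteq\mathfrak{Kosz}$ does not work, and this is the crux of the matter. You suggest using the rank~$6$ syzygy / reducible spin curve structure from Section~\hyperref[subsecelliptic]{2.A}, but that goes the wrong way: Lemma~\ref{DCE} starts from a curve already possessing a rank~$6$ syzygy and extracts an elliptic curve from its syzygy scheme. It gives no mechanism for producing a syzygy on a curve merely known not to be cut out by quadrics. Worse, the identification at the end of Section~\hyperref[subsecquartic]{2.B} shows that the quadric-deficient locus is $\mathfrak{Kosz}_7$, not $\mathfrak{Kosz}_6$, so the rank~$6$ picture is simply the wrong tool here.

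The paper's argument for $\mathfrak{D}_1\subseteq\mathfrak{Kosz}$ is much more elementary and you are missing it: one shows (Lemma~\ref{legencubic}) that $\mathcal{I}_C(3)$ is Castelnuovo--Mumford regular, hence globally generated, so $\phi_L(C)$ is always scheme-theoretically cut out by cubics. If it is not cut out by quadrics, then the ideal generated by $I_C(2)$ misses some cubic in $I_C(3)$, i.e.\ the multiplication map $I_C(2)\otimes H^0(\mathcal{O}_{\PP^6}(1))\to I_C(3)$ is not surjective, whence $K_{1,2}(C,L)\neq 0$. This is a two-line implication once regularity is checked, and requires no dimension count or structure theory. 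Separately, you pass too quickly over projective normality: Proposition~\ref{proquarticsing} needs it as a hypothesis, and the paper spends the bulk of its proof verifying (via a Clifford-index case analysis in the style of \cite{GL}) that the non-projectively-normal locus has codimension at least~$2$ in $P_8^{14}$, so that it does not interfere with the divisorial decomposition.
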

\begin{proof} We first have to prove that the locus of pairs $[C,L]$ such that
$\phi_L(C)$ is not scheme-theoretically cut-out by quadrics is contained in the divisor
$\mathfrak{Kosz}$. This is a consequence of the following lemmas:

\begin{lemm} \label{legencubic} If $L\not=K_C$ is a projectively normal paracanonical line bundle
  on a curve of genus $8$, then $\phi_L(C)$ is scheme-theoretically cut out by cubics.
\end{lemm}
\begin{proof}[Proof of Lemma \ref{legencubic}] We observe that the twisted ideal sheaf $\mathcal{I}_C(3)$ is regular in Castelnuovo-Mumford sense.
Indeed, we have $$H^i(\PP^6,\mathcal{I}_C(3-i))=H^{i-1}(C,L^{\otimes (3-i)})$$ for $i\geq 2$, and
the right hand side is obviously  $0$ for $i-1\geq 2$, and also $0$ for $i-1=1$ since
$H^1(C,L)=0$ because $ L\not=K_C$ and ${\rm deg}\,L=2g-2$.
 For $i=1$, we have
$$H^1(\PP^6,\mathcal{I}_C(2))=0$$
by projective normality.
Being regular, the sheaf $\mathcal{I}_C(3)$ is generated by global sections.
\hfill $\Box$
\end{proof}
\begin{coro}\label{cubics1} If $C,\,L$ are as above, and $C$ is not scheme-theoretically cut out by quadrics, then the multiplication map
$$I_C(2)\otimes H^0(\PP^6, \mathcal{O}_{\PP^6}(1))\rightarrow I_C(3)$$
is not surjective.
\end{coro}

To conclude the proof of the proposition, we just have to show that the sublocus of $P_8^{14}$
where $L$ is not projectively normal is not a divisor, since the statement of the proposition will be then an immediate consequence of Proposition \ref{proquarticsing}. We argue along the lines of \cite{GL}.
First of all, a line bundle $L$ of degree $14$ is not generated by sections if and only if
$L=K_C(-x+y)$ for some points $x,\,y\in C$. This determines a codimension
$6$ locus of $P_{8}^{14}$. Similarly
$L$ is not very ample if and only if $L=K_C(-x-y+z+t)$, for some points $x,\,y,\,z,\,t$ of $C$, which is satisfied in  a codimension $4$ locus of $P_{8}^{14}$.
Finally, assume $L$ is very ample but $\phi_L(C)$ is not projectively normal. Equivalently
$${\rm Sym}^2H^0(C,L)\rightarrow H^0(C,L^{\otimes 2})$$
is not surjective, which means that there exists
a rank $2$ vector bundle $F$ on $C$ which is a nontrivial extension
$$0\longrightarrow K_C\otimes L^{\vee}\longrightarrow F \longrightarrow L\longrightarrow 0,$$
such that
$h^0(C,F)=7$.
If $x,\,y,\,z\in C$, there exists a nonzero section $\sigma\in H^0(C, F)$ vanishing
on $x,\,y$ and $z$,
and thus $F$ is also an extension
\begin{eqnarray} \label{eqext8ev} 0\longrightarrow  D\longrightarrow F\longrightarrow K_C\otimes D^{\vee}\longrightarrow 0,
\end{eqnarray}
where $D$ is  a line bundle such that
$h^0(C,D(-x-y-z))\not=0$, and $h^0(C,L\otimes D^{\vee})\not=0$.
We thus have $h^0(C,D)+h^0(C,K_C\otimes D^{\vee})\geq 7$ and ${\rm Cliff}(D) \leq 2$. As $D$ is effective of degree
at least $3$, one has the following possibilities:

\vskip 4pt

a) $h^0(C,K_C\otimes D^{\vee})=0$, and then $D=L$, which contradicts the fact that the extension (\ref{eqext8ev})
is not split;

b)  $h^0(C,K_C\otimes D^{\vee})=1$ and $h^0(C,D)\geq 6$, and then
$D=L(-x)$ and $h^0(K_C\otimes L^{\vee}(x))\not=0$, so $L=K_C(x-y)$, which happens in a locus of codimension at least $6$
in $P_8^{14}$;

c) $D$ contributes to the Clifford index of $C$. As the locus of curves $[C]\in \cM_8$ with $\mbox{Cliff}(C) \leq 2$ is of codimension $2$ in $\mathcal{M}_8$, this situation does not occur in codimension $1$.
\hfill $\Box$
\end{proof}

\vskip 4pt

We shall need later on the following result:

\begin{lemm}\label{leinjmult} Let $\phi_L : C \hookrightarrow\PP^6$ be a projectively normal paracanonical curve of genus $8$. If $C$
is scheme-theoretically cut out by quadrics, the multiplication map
\begin{eqnarray}
\label{eqmult6fev} {\rm Sym}^2 \ I_{C,L}(2) \rightarrow  I_{C,L}(4)
\end{eqnarray}
is injective.
\end{lemm}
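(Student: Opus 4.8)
The plan is to reinterpret the injectivity of ${\rm Sym}^2 I_{C,L}(2)\to I_{C,L}(4)$ as a nondegeneracy statement about the quadrics through $C$, and then to settle it by a single intersection-number computation on the blow-up. I would keep the notation of the proof of Proposition~\ref{proquarticsing}: let $X\stackrel{\tau}{\rightarrow}\PP^6$ be the blow-up of $\PP^6$ along $C$, let $E$ be the exceptional divisor, and put $H:=\tau^*\OO_{\PP^6}(2)(-E)$, so that $H^0(X,H)=I_{C,L}(2)$ has dimension $7$ by projective normality, $H$ is globally generated precisely because $C$ is scheme-theoretically cut out by quadrics, and $H^0(X,2H)=H^0\bigl(\PP^6,\mathcal{I}_C^2(4)\bigr)=I_{C,L}(4)_2\subseteq I_{C,L}(4)$. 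The map in the statement is the natural multiplication $H^0(X,H)^{\otimes 2}\to H^0(X,2H)$ restricted to the symmetric part, and its image indeed lands in $I_{C,L}(4)_2$, so injectivity into $I_{C,L}(4)$ and into $I_{C,L}(4)_2$ are the same condition. Let $\Phi\colon X\to\PP\bigl(H^0(X,H)^{\vee}\bigr)=\PP^6$ be the morphism defined by $|H|$. Identifying ${\rm Sym}^2 H^0(X,H)$ with the space of quadratic forms on the target $\PP^6$, this multiplication map is exactly the pullback $\Phi^*$; thus a nonzero element of its kernel is the same thing as a quadric hypersurface of $\PP^6$ containing the image $\Phi(X)$. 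Hence it will suffice to prove that $\Phi$ is dominant, for then $\Phi(X)=\PP^6$ lies on no quadric.

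To prove dominance, I would observe that it is enough to check that the top self-intersection number $H^6\in\ZZ$ is nonzero: if $\Phi$ were not dominant, its image would have dimension at most $5$, so six general hyperplanes $H_1,\dots,H_6$ of the target would meet $\Phi(X)$ in the empty set, the effective divisors $\Phi^*H_1,\dots,\Phi^*H_6\in|H|$ would have empty common support, and therefore $H^6=\Phi^*\bigl(c_1\OO_{\PP^6}(1)\bigr)^6=0$. The remaining step is then to compute $H^6=(2h-E)^6$, where $h:=\tau^*\OO_{\PP^6}(1)$. Since $E$ is a $\PP^4$-bundle over the curve $C$, one has $h^j\cdot E^k=0$ whenever $j\geq 2$ and $k\geq 1$, so only three monomials survive:
\[
H^6 = 2^6\,h^6-\binom{6}{5}\cdot 2\,(h\cdot E^5)+E^6 = 64-12\,(h\cdot E^5)+E^6 .
\]
The standard formulas for the intersection theory of the blow-up of a smooth curve in $\PP^6$ give $h\cdot E^5=\deg C=14$ and $E^6=\deg N_{C/\PP^6}$, and from the normal bundle sequence $0\to T_C\to T_{\PP^6}|_C\to N_{C/\PP^6}\to 0$ one gets $\deg N_{C/\PP^6}=7\deg C+2g(C)-2=98+14=112$. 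Hence $H^6=64-168+112=8\neq 0$, so $\Phi$ is dominant and the lemma follows.

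The computation is routine, and the only genuine subtlety — the main obstacle — is getting the self-intersection $E^6$ (and the sign of the $h\cdot E^5$ term) right: for the blow-up of a curve in an \emph{even}-dimensional projective space these quantities come out equal to $+\deg N_{C/\PP^6}$ rather than $-\deg N_{C/\PP^6}$. As a reassuring check, $H^6=8$ also equals the number of residual points of the intersection of six general quadrics through $C$, namely $2^6-\bigl(12\deg C-\deg N_{C/\PP^6}\bigr)=64-56=8$.
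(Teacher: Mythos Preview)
Your proof is correct and follows essentially the same route as the paper's: both pass to the blow-up $\tau:X\to\PP^6$, identify the multiplication map with the pull-back $\psi^*$ (your $\Phi^*$) on quadratic forms under the morphism given by $|H|$, and then show dominance by computing $H^6=8$ exactly as in Sublemma~\ref{lenumbers}. Your write-up is slightly more explicit about why dominance forces injectivity and about the sign of $E^6$, but the argument is the same.
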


\begin{proof} As the restriction map
$\phi_L^*: H^0(\PP^6, \mathcal{O}_{\PP^6}(2)) \rightarrow  H^0(C, L^{\otimes 2})$
is surjective, its kernel $I_{C,L}(2)$ is of dimension $7$. Let as before
$\tau: X \rightarrow \PP^6$
be the blow-up of
$\PP^6$ along $\phi_L(C)$, and let $E$ be its exceptional divisor. We view $I_{C,L}(2)$
 as $H^0(X,\tau^*\mathcal{O}(2)(-E))$
and our assumption is that $I_{C,L}(2)$  generates the line bundle
$H:=\tau^*\mathcal{O}(2)(-E)$ everywhere on
$X$. Thus $I_{C,L}(2)$ provides a morphism
  \begin{eqnarray}
  \label{sec0}\psi: X \rightarrow \PP(I_{C,L}(2)).
  \end{eqnarray}
Now we have ${\rm deg}\, c_1(H)^6 \not=0$
 by Sublemma \ref{lenumbers} below, and thus the morphism $\psi$
has to be generically finite, hence dominant since both spaces have dimension $6$. It is then
clear that the pull-back map

$$\psi^*: H^0\bigl(\PP(I_{C,L}(2)),\mathcal{O}(2)\bigr) \rightarrow H^0(X, H^{\otimes 2})$$
is injective. On the other hand, this morphism is nothing but the map (\ref{eqmult6fev}).
\hfill $\Box$
\end{proof}

\begin{sublemm} \label{lenumbers} With the same notation as above, we have
\begin{eqnarray}
\label{eqdeg6fev}{\rm deg}\, c_1(H)^6=8.
\end{eqnarray}
\end{sublemm}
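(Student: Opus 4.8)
The plan is a direct computation of the top self-intersection of $c_{1}(H)=2h-E$ on $X$, where I abbreviate $h:=\tau^{*}c_{1}(\mathcal{O}_{\PP^{6}}(1))$. Write $j: E\hookrightarrow X$ for the inclusion of the exceptional divisor and $\pi:=\tau|_{E}: E\to C$ for the associated $\PP^{4}$-bundle, and recall the standard identities $\mathcal{O}_{E}(E)\cong\mathcal{O}_{E}(-1)$ and $E\cong\PP(N_{C/\PP^{6}})$. Expanding by the binomial theorem,
$$\deg c_{1}(H)^{6}=\sum_{k=0}^{6}{6\choose k}2^{6-k}(-1)^{k}\,\deg\bigl(h^{6-k}E^{k}\bigr).$$
The first point is that the mixed term vanishes for $1\le k\le4$. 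Indeed, for $k\ge1$ one has $E^{k}\cap[X]=c_{1}(\mathcal{O}_{E}(E))^{k-1}\cap[E]$, so by the projection formula $\deg(h^{6-k}E^{k})=\int_{E}(\pi^{*}h_{C})^{6-k}\cdot c_{1}(\mathcal{O}_{E}(E))^{k-1}$, where $h_{C}:=\mathcal{O}_{C}(1)$ is a divisor class on the curve $C$; since $(h_{C})^{m}=0$ for $m\ge2$, this vanishes whenever $6-k\ge2$. Hence only the terms $k=0,5,6$ can contribute.

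For the three surviving terms I would use $\pi_{*}(\zeta^{4})=[C]$ and $\pi_{*}(\zeta^{5})=-c_{1}(N_{C/\PP^{6}})$, where $\zeta:=c_{1}(\mathcal{O}_{E}(1))=-c_{1}(\mathcal{O}_{E}(E))$; the second identity follows from the Grothendieck relation $\zeta^{5}+\pi^{*}c_{1}(N_{C/\PP^{6}})\cdot\zeta^{4}=0$ on the $\PP^{4}$-bundle, the higher Chern classes of $N_{C/\PP^{6}}$ vanishing because $C$ is a curve. This yields $\deg h^{6}=1$ and
$$\deg\bigl(hE^{5}\bigr)=\int_{E}\pi^{*}(h_{C})\cdot\zeta^{4}=\deg\mathcal{O}_{C}(1)=14,$$
$$\deg\bigl(E^{6}\bigr)=\int_{E}(-\zeta)^{5}=-\int_{C}\pi_{*}(\zeta^{5})=\deg N_{C/\PP^{6}}=7\cdot14+2\cdot8-2=112,$$
the last equality obtained from the normal bundle sequence of $C\subseteq\PP^{6}$, which gives $\deg N_{C/\PP^{6}}=7\deg C+2g(C)-2$.

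Putting the three contributions together,
$$\deg c_{1}(H)^{6}=2^{6}-{6\choose 5}\cdot2\cdot14+{6\choose 6}\cdot112=64-168+112=8,$$
which is the claim. The only genuinely delicate point — and the place where I would be most careful — is the sign bookkeeping in $\mathcal{O}_{E}(E)\cong\mathcal{O}_{E}(-1)$ and in the push-forwards $\pi_{*}(\zeta^{4})$ and $\pi_{*}(\zeta^{5})$; everything else is a routine binomial computation. As a sanity check, $H^{0}(X,H)=I_{C,L}(2)$ has dimension $7$ and, under the running hypothesis that $C$ is scheme-theoretically cut out by quadrics, $H$ is globally generated, so $\deg c_{1}(H)^{6}$ is forced to be the degree of the generically finite morphism $\psi$ of (\ref{sec0}) onto $\PP^{6}$ — a positive integer, consistently with the value $8$.
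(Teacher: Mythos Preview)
Your argument is correct and follows essentially the same route as the paper's own proof: expand $(2h-E)^6$ binomially, observe that only the terms with $h^6$, $hE^5$, and $E^6$ survive, and compute these three as $1$, $14$, and $\deg N_{C/\PP^6}=112$ respectively, yielding $64-168+112=8$. You supply more detail than the paper does on why the mixed terms vanish and on the push-forwards $\pi_*(\zeta^4),\pi_*(\zeta^5)$, but the computation is the same.
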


\begin{proof}
We have
$$c_1(H
)^6 =\sum_i {6\choose i}(-2)^ih^i \cdot E^{6-i},$$
where $h :=\tau^*c_1(\mathcal{O}_{\PP^6}(1))$, and
$$h^i\cdot  E^{6-i} = 0$$
for $i \not= 6,\, 1,\,0$. Furthermore
$$h^6 = 1,\mbox{ and }  h\cdot   E^5 = {\rm deg}\,\phi_L(C) = 14$$
and $E^6 = c_1(N_C)$. By adjunction formula
$${\rm deg}\,c_1(N_C) = 7 {\rm deg}\, \phi_L(C) + {\rm deg}\,K_C = 8 \cdot 14.$$
It follows that
$${\rm deg}\, c_1(H)^6
= 64-6\cdot 28+8 \cdot 14=8,
$$
which proves (\ref{eqdeg6fev}).
\hfill $\Box$
\end{proof}

Proposition \ref{proquarticsing} and Lemma \ref{leinjmult} describe precisely the splitting of the Koszul divisor $\mathfrak{Kosz}$ into the divisors $\mathfrak{Kosz}_6$
and $\mathfrak{Kosz}_7$ corresponding to paracanonical curves $[C,L]\in P^{14}_8$ having a non-zero syzygy $\gamma\in K_{1,2}(C,L)$ of rank $6$ or respectively $7$.
Precisely, $\mathfrak{Kosz}_6$ is a unirational divisor (cf. \cite{chiodetal} Theorem 5.3) consisting of those paracanonical curves $C\subseteq \PP^6$ for which
$H^1(\PP^6, \mathcal{I}_C^2(4))\neq 0$. The divisor $\mathfrak{Kosz}_7$ consists of paracanonical curves $C\subseteq \PP^6$ which are not scheme-theoretically cut out by quadrics.

\vskip 4pt

\section{First proof: reducible spin curves}\label{secthirdproof}
\subsection{The syzygy is degenerate}

The first observation is the following result (already observed experimentally in \cite{chiodetal}), which turns out to be useful for the
description given below of the general paracanonical curve of genus $8$ with nontrivial syzygies.

\begin{prop} \label{prodeg} Let $C\subseteq  \PP^6$ be a smooth paracanonical curve  of genus $8$ and degree $14$, scheme-theoretically generated by quadrics.
Then a nontrivial syzygy $$\gamma \in {\rm Ker}\, \bigl\{I_C(2)\otimes H^0(\mathcal{O}_{\PP^6}(1))\rightarrow I_C(3)\bigr\}$$
must be degenerate, that is of rank at most $6$.
\end{prop}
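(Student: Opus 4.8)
The plan is to argue by contradiction: suppose $\gamma \in I_C(2)\otimes H^0(\mathcal{O}_{\PP^6}(1))$ is a nontrivial syzygy of maximal rank $7$, so that the linear forms $\ell_0,\dots,\ell_6$ appearing in $\gamma = \sum_i \ell_i \otimes q_i$ span all of $H^0(\mathcal{O}_{\PP^6}(1))$. After a change of coordinates we may assume $\ell_i = x_i$, the coordinate functions on $\PP^6$, so that $\gamma$ reads $\sum_i x_i q_i = 0$ with $q_i \in I_C(2)$. The key structural input, as in the rank $6$ case recalled in Section~\hyperref[subsecelliptic]{2.A} via Lemma~6.3 of \cite{Sc}, is that a linear syzygy of full rank among quadrics forces a skew-symmetric matrix of linear forms $A = (a_{ij})$ with $q_i = \sum_j a_{ij} x_j$; the syzygy scheme is then cut out by the $7$ quadrics $\sum_j a_{ij} x_j$ together with the $5\times 5$ sub-Pfaffians of $A$ (a $7\times 7$ skew matrix has all $6\times 6$ Pfaffians vanishing identically, so the relevant equations are of degree $2$ and the $5\times 5$ Pfaffians of degree $5/2$ — actually one works with the $4\times 4$ Pfaffians, which are the quadratic entries, and with the condition $\mathrm{rank}\, A \le 4$). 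The upshot is that $\mathrm{Syz}(\gamma)$ is contained in the locus where $A$ has rank $\le 4$, and the curve $C$ itself, being cut out scheme-theoretically by quadrics from $I_C(2) \supseteq \langle q_0,\dots,q_6\rangle$, sits inside this syzygy scheme.

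Next I would run a dimension/degree count on the determinantal variety $Y := \{x : \mathrm{rank}\, A(x) \le 4\} \subseteq \PP^6$ associated to a generic such $A$. For a generic skew-symmetric $7\times 7$ matrix of linear forms, the expected codimension of the rank $\le 4$ locus is $\binom{7-4}{2} = 3$, so $Y$ is expected to be a surface in $\PP^6$, not a curve; and one knows its class (a Pfaffian/Segre-type formula). The point is that for $C$ to be a component of $\mathrm{Syz}(\gamma)$ one needs $C \subseteq Y$, and then either $Y$ has a component of dimension $>1$ containing $C$, or $Y$ degenerates. One then compares: the degree and arithmetic genus forced on any curve component of such a determinantal locus (analogous to the genus $22$, degree $21$ theta-characteristic curve in the rank $6$ situation, cf.\ \cite{ELMS}, Theorem~4.4) against the known invariants of $\phi_L(C)$, namely genus $8$ and degree $14$. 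A smooth paracanonical genus $8$ curve of degree $14$ simply cannot be (a component of) the syzygy scheme of a rank $7$ syzygy, because the latter would have to carry $\mathcal{O}(1)$ as (a twist of) a theta-characteristic of far too high degree, contradicting $\deg L = 14 = 2g-2$ and $L \ne K_C$; more precisely $\mathcal{O}_C(1) = L$ would be forced into a numerical regime incompatible with $L$ being paracanonical. An alternative, cleaner route: invoke that $\cR_8 \not\subseteq \mathfrak{Kosz}_7$ is already known from \cite{chiodetal} by the nodal specialization argument, and upgrade it to the statement that \emph{no} smooth paracanonical genus $8$ curve scheme-theoretically cut out by quadrics lies in $\mathfrak{Kosz}_7$, by analyzing the syzygy scheme directly as above.

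The main obstacle I expect is making the determinantal bookkeeping rigorous: one has to be careful that $C$ could in principle be an embedded or non-reduced component of $\mathrm{Syz}(\gamma)$, or that $A$ is not generic and $Y$ acquires unexpected low-dimensional components. The way around this is to use the hypothesis that $C$ is scheme-theoretically cut out by quadrics more forcefully — this pins down $I_C(2)$ to be $7$-dimensional (computed in Section~\hyperref[sec1]{2}) and identifies $\langle q_0,\dots,q_6\rangle$ with a specific subspace, so the matrix $A$ is essentially determined by $C$ up to the known ambiguity, and one can then quote the classification of such Pfaffian representations (again via \cite{Sc}, \cite{ELMS}). The cleanest formulation of the contradiction is probably: a rank $7$ syzygy would exhibit $\mathcal{O}_C(1)$ inside the restriction to $C$ of a theta-characteristic on a genus $22$ curve structure, but unlike the rank $6$ case where the ambient genus $22$ curve is \emph{reducible} with $C$ as a genuine component meeting an elliptic septic (Lemma~\ref{DCE}), in the rank $7$ case the numerics leave no room for such a residual curve, so $\mathrm{Syz}(\gamma) \supseteq C$ is impossible. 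I would present the proof in that order: reduce to normal form, extract the Pfaffian matrix, compute the expected and actual invariants of the syzygy scheme, and derive the numerical contradiction with $\deg \phi_L(C) = 14$ and $g(C) = 8$.
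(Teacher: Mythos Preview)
Your approach is quite different from the paper's, and as written it has real gaps.

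First, the determinantal bookkeeping is off in several places. For a $7\times 7$ skew-symmetric matrix the determinant vanishes, but the $6\times 6$ sub-Pfaffians (degree $3$ in the entries) do \emph{not} vanish identically; your parenthetical about ``$5\times 5$ Pfaffians of degree $5/2$'' and then ``$4\times 4$ Pfaffians'' shows the setup is not pinned down. Also, codimension $\binom{3}{2}=3$ in $\PP^6$ gives a threefold, not a surface. More seriously, you never actually extract a contradiction: the genus-$22$ theta-characteristic picture you quote is specific to the rank $6$ situation (where the missing linear form singles out a point $p$ and a residual septic elliptic), and there is no analogous structure theorem in \cite{Sc} or \cite{ELMS} for rank $7$ that forces numerical invariants incompatible with $\deg C=14$, $g(C)=8$. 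Saying ``the numerics leave no room for a residual curve'' is exactly the step that needs proof, and invoking $\cR_8\not\subseteq\mathfrak{Kosz}_7$ from \cite{chiodetal} does not give the statement for an \emph{arbitrary} paracanonical curve cut out by quadrics.

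The paper's argument avoids the syzygy-scheme analysis entirely. It uses the map $\psi:X\to\PP(I_C(2))$ from the blow-up $X$ of $\PP^6$ along $C$ (already introduced in Lemma~\ref{leinjmult}), and observes that a syzygy $\gamma$ defines a $(1,1)$-hypersurface $Y\subseteq\PP^6\times\PP^6$ containing the image of $(\tau,\psi):X\to\PP^6\times\PP^6$. If $\gamma$ had rank $7$, this $Y$ would be \emph{smooth}, and then Lefschetz for hyperplane sections forces the class $[(\tau,\psi)(X)]\in H^{12}(\PP^6\times\PP^6,\mathbb{Z})$ to be divisible by $h_1+h_2$. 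A short lemma translates this into the vanishing of the alternating sum $\sum_{i=0}^6(-1)^i h_1^i h_2^{6-i}\cdot[(\tau,\psi)(X)]$, which one then checks fails using the intersection numbers $h^6=1$, $h\cdot E^5=14$, $E^6=8\cdot 14$ already computed in Sublemma~\ref{lenumbers}. This is a clean topological obstruction rather than a determinantal/Pfaffian classification, and it sidesteps all the genericity and reducedness issues you flag.
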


\begin{proof}  We use the morphism
$$\psi:X\rightarrow \PP(I_C(2))$$
introduced in  (\ref{sec0}), where
$\tau:X\rightarrow \PP^6$ is the blow-up of $C$ with exceptional divisor $E$,
and $H:=\tau^*\mathcal{O}_{\PP^6}(-2E)$.
This gives us a morphism
$$(\tau,\psi):X\rightarrow \PP^6\times \PP^6$$
which is of degree $1$ on its image,
and the syzygy $\gamma$ induces a hypersurface
$Y$  of bidegree $(1,1)$ in $\PP^6\times \PP^6$ containing
the $6$-dimensional variety $(\tau,\psi)(X)$.
Assume to the contrary that $\gamma$ has maximal rank $7$, or equivalently that $Y$ is smooth.
Then by the Lefschetz Hyperplane Restriction Theorem, the restriction map
$H^{10}(\PP^6\times \PP^6,\mathbb{Z})\rightarrow H^{10}(Y,\mathbb{Z})$ is surjective, so that
$[(\tau,\psi)(X)]_Y\in H^{10}(Y,\mathbb{Z})$ is the restriction of  a class $\beta\in H^{10}(\PP^6\times \PP^6,\mathbb{Z})$, which implies that
\begin{eqnarray} [(\tau,\psi)(X)]=\beta\cdot [Y]\,\,{\rm in}\,\,H^{12}(\PP^6\times \PP^6,\mathbb{Z}),
\end{eqnarray}
where $[Y]\in H^2(\PP^6\times \PP^6,\mathbb{Z})$ is the class of $Y$, that is
$h_1+h_2$, with $h_i$ for  $i=1,2$ being the pull-backs of the hyperplane classes on each factor.
Note that $H^{12}(\PP^6\times \PP^6,\mathbb{Z})$ is the set of degree
$6$ homogeneous polynomials with integral coefficients in $h_1$ and $h_2$.
We now have:
\begin{lemm}\label{ledivclass} An element  $\alpha\in H^{12}(\PP^6\times \PP^6,\mathbb{Z})$ is of the form
$(h_1+h_2)\cdot \beta$ if and only if it satisfies
the condition
\begin{eqnarray}\label{degcond23sep}\sum_{i=0}^6 (-1)^i h_1^{i}\cdot h_2^{6-i}\cdot\alpha=0\,\,{\rm in}\,\,H^{24}(\PP^6\times \PP^6,\mathbb{Z})=\mathbb{Z}.
\end{eqnarray}
\end{lemm}
\begin{proof}[Proof of Lemma \ref{ledivclass}] We have $(h_1+h_2)\cdot \bigl(\sum_i(-1)^ih_1^{i}\cdot h_2^{6-i}\bigr)=0$ in $H^{14}(\PP^6\times \PP^6,\mathbb{Z})$, so one implication is obvious. That the two conditions are equivalent then follows
from the fact that both conditions  determine a saturated corank $1$ sublattice of $H^{12}(\PP^6\times \PP^6,\mathbb{Z})$.
\hfill $\Box$
\end{proof}

To conclude that $\gamma$ has to be degenerate,  in view of Lemma \ref{ledivclass},
it suffices to prove that the class $[(\tau,\psi)(X)]$ does not satisfy (\ref{degcond23sep}). Since
$(\tau,\psi)^*h_1=c_1(H)$ and $(\tau,\psi)^*h_2=2c_1(H)-E$, it is enough to prove that
$$\sum_{i=0}^6 (-1)^ic_1(H)^i\cdot (2c_1(H)-E)^{6-i}\not=0,$$
which follows from the computations made in the proof of Sublemma \ref{lenumbers}.
\hfill $\Box$
\end{proof}

\subsection{Syzygies and spin curves of genus 22 in $\PP^6$}

Recall that $\overline{\mathcal{S}}_g^{-}$ denotes the moduli stack of odd stable spin curves of genus $g$,   see \cite{Cor} for details.
We start with a nodal genus $22$ spin curve of the form $[D:=C \cup E, \vartheta]\in \overline{\mathcal{S}}_{22}^{-}$, where
 $C$ is a smooth genus $8$ curve, $E$ is a smooth elliptic curve and  $e:=C\cap E$ consists of $14$ distinct points, thus $p_a(D)=22$.  Assume  $\vartheta\in \mbox{Pic}^{21}(D)$ verifies $\vartheta^{\otimes 2}\cong \omega_D$, hence the restricted line bundles $\vartheta_{E}$ and $\vartheta_{C}$  have degrees $7$ and $14$ respectively.  Furthermore, $h^0(E, \vartheta_E)=7$, whereas  $h^0(C, \vartheta_C)=7$ if and only if $\vartheta_C\ncong K_C$. The intersection divisor $e$ on the two components of $D$ is characterized by
 $$e_C\in |\vartheta_C^{\otimes 2} \otimes K_C^{\vee}| \ \mbox{ and } \ e_E\in |\vartheta_E^{\otimes 2}|.$$
Note in particular that $e_C\in|K_C|$ if and only if $\vartheta_{C}^{\otimes 2}=K_C^{\otimes 2}$, that is
$(C,\vartheta_{C})$ is canonical or Prym canonical.

The line bundle $\vartheta$ on $D$ fits into the Mayer-Vietoris exact sequence:
 $$
 0 \longrightarrow  \vartheta \longrightarrow \vartheta_C \oplus \vartheta_E \stackrel{r}  \longrightarrow  \mathcal O_e(\vartheta) \longrightarrow 0,
$$
where $r$ is defined by the isomorphisms on the fibers of $\vartheta_C$ and $\vartheta_E$ over the points in $e$. Given $\vartheta_C\in \mbox{Pic}^{14}(C)$  with $\vartheta_C^{\otimes 2}=K_C(e)$ and $\vartheta_E \in \mbox{Pic}^7(E)$ with $\vartheta_E^{\otimes 2}=\mathcal{O}_E(e)$, there is a  finite number of stable spin curves  $[D,\theta]\in \overline{\mathcal{S}}_{22}^{-}$ such that the restrictions of $\vartheta$ to $C$ and $E$ are isomorphic to $\vartheta_C$ and $\vartheta_E$ respectively.  Passing to  global sections in the Mayer-Vietoris sequence, we obtain the exact sequence:
\begin{eqnarray}\label{exact}
0 \longrightarrow  H^0(D, \vartheta) \longrightarrow H^0(C, \vartheta_C) \oplus H^0(E, \vartheta_E )\stackrel{r}  \longrightarrow H^0(\mathcal O_e(\vartheta)) \longrightarrow \cdots.
\end{eqnarray}
Note that $r$ is represented by a  $14 \times 14$ matrix and $ h^0(D, \vartheta) = 14-\mbox{rk}(r) $. In the case of a reducible spin curve coming from the syzygy of a paracanonical genus 8 curve in $\mathfrak{Kosz_6}$, one has $h^0(D, \vartheta) =\mbox{rk}(r) =7$.

\subsection{Proof of Theorem \ref{theomainintro} via reducible spin curves}

Theorem \ref{theomainintro} states that every Prym canonical curve of genus 8 has a syzygy of rank 6. First we observe the existence  of such a curve having the generic behavior described in  Lemma \ref{DCE}.

\begin{lemm}\label{start}
There exists a curve $[C,\eta]\in \cR_8$, whose Prym canonical model is  scheme theoretically cut out by  quadrics,  and  $K_{2,1}(C, K_C\otimes \eta)$ is $1$-dimensional, generated by a syzygy $\gamma$ of rank $6$. The syzygy scheme of $\gamma$ is the union of a point $p$ and a nodal curve $D=C\cup E$,  such that $E$ is a smooth elliptic curve of degree $7$ and $e:=C\cdot E\in |K_C|$ consists of  $14$ mutually distinct points.
 Moreover, no cubic polynomial on $\PP^6$ vanishes with multiplicity $2$ along $C$.
\end{lemm}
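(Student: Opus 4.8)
The plan is to run in reverse the construction of Section~\ref{subsecelliptic}. It suffices, by Lemma~\ref{ID}, to exhibit a reducible nodal spin curve $[D = C \cup_e E, \vartheta] \in \overline{\mathcal{S}}_{22}^{-}$, embedded in $\PP^6$ by $\vartheta = \OO_D(1)$ and with $\omega_D \cong \OO_D(2)$, in which $E$ is a smooth elliptic septic, $C$ is a smooth curve of genus $8$, the divisor $e = C \cap E$ consists of $14$ distinct points, and $\vartheta_C = \OO_C(1) \cong K_C \otimes \eta$ for a nonzero $2$-torsion point $\eta$ — equivalently $\OO_C(e) \cong K_C$, i.e.\ $e_C \in |K_C|$. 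Indeed, given such a $D$, the curve $C \subseteq \PP^6$ is Prym-canonical, and Lemma~\ref{ID} produces a nonzero syzygy $\gamma \in K_{2,1}(C, K_C \otimes \eta)$ of rank $6$ whose quadrics span the $6$-dimensional space $I_D(2) \subseteq I_C(2)$; its syzygy scheme is the union of $D$ and the isolated point $p$ cut out by these quadrics, by the discussion following Lemma~\ref{DCE}.

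To build $D$, I start from a smooth elliptic normal septic $E \subseteq \PP^6$ together with a reduced quadric section $e := E \cap Q \in |\OO_E(2)|$ of $14$ points, and look for a smooth Prym-canonical curve $C \subseteq \PP^6$ of genus $8$ through $e$, meeting $E$ transversally, with $\OO_C(e) \cong K_C$; then $\vartheta := \OO_D(1)$ on $D := C \cup_e E$ automatically satisfies $\omega_D \cong \OO_D(2)$. On the level of moduli, this means finding, inside the boundary stratum of $\overline{\mathcal{S}}_{22}^{-}$ lying over curves $C \cup_e E$ with $C$ of genus $8$, $E$ elliptic and $14$ nodes, a spin curve with $h^0(\vartheta) = 7$ (equivalently, by the Mayer--Vietoris sequence~(\ref{exact}), with the gluing map $r$ of rank $7$) and $\vartheta_C$ Prym-canonical. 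The parameter count sketched in the introduction applies: the Brill--Noether-type locus $\{h^0(\vartheta) \geq 7\}$ has codimension $\binom{7}{2} = 21$ in $\overline{\mathcal{S}}_{22}^{-}$, and this is exactly the dimension of the family of spin curves of the indicated form with $C$ Prym-canonical, the arithmetic being balanced precisely because $\dim |K_C \otimes \eta^{\otimes 2}| = 7$ if and only if $\eta^{\otimes 2} \cong \OO_C$. Hence such $D$ are expected to exist; one exhibits one concretely — for instance by degenerating a smooth curve of $\overline{\mathcal{S}}_{22}^{-}$ with $h^0(\vartheta) = 7$, which exists by Theorem~4.4 of \cite{ELMS}, towards this boundary stratum while keeping the restriction of $\vartheta$ to the genus $8$ component Prym-canonical, or by writing down a degenerate incidence configuration in which the $14$ passing conditions decouple.

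Granting one such $D = C \cup_e E$, the remaining assertions follow by openness, after being checked on this example. That $D$ is nodal, $e$ reduced of $14$ distinct points, and $C, E$ smooth are open conditions, hence hold for a general member of the constructed family. That $C$ is projectively normal and scheme-theoretically cut out by quadrics holds on the complement of the divisor $\mathfrak{Kosz}_7$, by Proposition~\ref{splitting} and the discussion there of non-projectively-normal paracanonical bundles, so it holds for a general such $C$; then Proposition~\ref{prodeg} shows that $C$ carries no syzygy of rank $7$. The equality $h^0(D, \vartheta) = 7$ is the open condition $\mathrm{rk}(r) = 7$ in~(\ref{exact}). Finally $\dim K_{2,1}(C, K_C \otimes \eta) = 1$: the syzygy $\gamma$ gives the lower bound, while the upper bound follows from upper semicontinuity of $h^0$ in Koszul cohomology, from the absence of a rank $7$ syzygy, and from the fact that for $C$ general there is a single reducible spin curve $C \cup_e E$ of the above type, hence a unique rank $6$ syzygy up to scalar. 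The vanishing $H^0(\PP^6, \mathcal{I}_C^2(3)) = 0$ is again an open condition on $[C, \eta]$, verified for the constructed curve, e.g.\ by a spectral-sequence computation analogous to Lemma~\ref{levan}.

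The main obstacle is the nonemptiness in the second step. Because the parameter count is exactly balanced, it predicts only the expected dimension and does not by itself produce a point of the relevant locus; one must genuinely either write down an explicit configuration $C \cup_e E$ and verify that its spin bundle has seven sections, or prove a transversality statement showing that the Brill--Noether locus $\{h^0(\vartheta) \geq 7\} \subseteq \overline{\mathcal{S}}_{22}^{-}$ meets the locus of spin curves supported on $C \cup_e E$ with $C$ Prym-canonical properly and in a nonempty set. Once one good example is available, all the genericity statements above are routine.
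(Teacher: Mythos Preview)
Your proposal has a genuine gap, which you yourself identify: the nonemptiness step. The parameter count you sketch is exactly the argument the paper runs \emph{after} Lemma~\ref{start}, in the first proof of Theorem~\ref{theomainintro}; there the role of Lemma~\ref{start} is precisely to furnish an isolated point of the fibre $\mu^{-1}([C,\eta])$, which is what guarantees that the degeneracy locus $T_7$ is nonempty and dominates $\cR_8$. So invoking that same count to prove Lemma~\ref{start} is circular, and ``one exhibits one concretely'' is a placeholder, not a proof. Your suggestion to degenerate a smooth spin curve with $h^0(\vartheta)=7$ from \cite{ELMS} towards the boundary stratum ``while keeping the restriction of $\vartheta$ to the genus $8$ component Prym-canonical'' is not justified: there is no reason a priori that such a degeneration exists with the Prym-canonical constraint preserved.

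There is a second, smaller gap in the argument for $\dim K_{2,1}=1$. You deduce the upper bound from the claim that for general $C$ there is a \emph{single} elliptic curve $E$ with $C\cup_e E$ a spin curve of the required type. But you have not proved uniqueness of $E$; that would again require exactly the kind of transversality or explicit verification you are missing. Upper semicontinuity alone gives nothing here, since you have no curve on which the dimension is known to be $1$.

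The paper's own proof sidesteps all of this by quoting the explicit (computer-assisted) examples of \emph{singular} Prym-canonical curves constructed in \cite{chiodetal} Proposition~4.4 and \cite{cofre}, for which all the listed properties are verified directly, and then taking a generic smooth deformation in $\rr_8$. All the conditions in the statement are open, so they persist. Your attempt to give a computation-free construction is a reasonable aim, but as written it does not close.
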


\begin{proof} Examples of \emph{singular} Prym canonical curves  having all these properties have been produced in  \cite{chiodetal} Proposition 4.4 or \cite{cofre}. A generic deformation in $\rr_8$ of these singular examples will provide the required smooth Prym canonical curve.
\hfill $\Box$
\end{proof}

\begin{proof}[(First) proof of Theorem \ref{theomainintro}]

We denote by $X$ the moduli space of elements $[C,\eta, x_1, \ldots, x_{14}]$, where $[C,\eta]\in \cR_8$ is a smooth Prym curve of genus $8$  and $x_i\in C$
are pairwise distinct points such that  $x_1+\cdots+x_{14}\in |K_C|\cong \PP^7$. Since the fibres of the forgetful map $X\rightarrow \cR_8$ are $7$-dimensional, it
follows that $X$ is an irreducible variety of dimension $28$.

\vskip 3pt
Let $T$ be the locally closed parameter space of  odd genus $22$ spin curves  having  the form
$$\Bigl(\bigl[D:=C\cup_{\{x_1, \ldots, x_{14}\}} E, \vartheta \bigr]:[C]\in \cM_8, \ \sum_{i=1}^{14} x_i\in |K_C|, \ [E, x_1, \ldots, x_{14}]\in \cM_{1,14}, \  \vartheta^{\otimes 2}=\omega_D\Bigr).$$

Observe that points in $T$, apart from the spin structure $[D,\vartheta]\in \overline{\mathcal{S}}_{22}^{-}$ also carry an underlying Prym structure $[C,\eta:=K_C\otimes \vartheta_C^{\vee}]\in \cR_8$, for $\vartheta_C^{\otimes 2}\cong K_C(x_1+\cdots+x_{14})\cong K_C^{\otimes 2}$. One has an induced  finite morphism  $T\rightarrow X\times \cM_{1,14}$, as well as a map $\mu:T\rightarrow \cR_8$ forgetting the $14$-pointed elliptic curve. It follows  that $\mbox{dim}\,T=\mbox{dim}\,X+\mbox{dim}\,\cM_{1,14}=42$. The locus
$$T_{7}:=\bigl\{[D, \vartheta]\in T: h^0(D,\vartheta)\geq 7\bigr\}$$
has the structure of a skew-symmetric degeneracy locus. Applying \cite{Harris} Theorem 1.10, each component of $T_7$ has codimension at most ${7\choose 2}=21$ inside $T$, that is, $\mbox{dim}(T_7)\geq \mbox{dim}(\cR_8)$.

\vskip 5pt

By passing to a general $8$-nodal Prym canonical curve $[C,\eta]$, following \cite{chiodetal} Proposition 4.5, as well as Lemma \ref{start}, we have that $\mbox{dim } K_{1,2}(C,K_C\otimes \eta)=1$. In particular, the fibre $\mu^{-1}([C,\eta])$ contains an isolated point, which shows that $T_7$ is non-empty and has a component which maps dominantly under $\mu$ onto $\cR_8$.    Theorem \ref{theomainintro} now follows.
\hfill $\Box$
\end{proof}

\begin{rema}{\rm
The same construction can be carried out at the level of general paracanonical curves $[C,L]\in P^{14}_8$, where $L\in \mbox{Pic}^{14}(C)\setminus \{K_C\}$.   The key difference is that we replace $T$ by a variety $T'$ parametrizing objects
$$\Bigl(\bigl[D:=C\cup_{\{x_1, \ldots, x_{14}\}} E, \vartheta, L \bigr]: [C,x_1, \ldots, x_{14}]\in \cM_{14,8}, \  L\in \mbox{Pic}^{14}(C)\setminus \{K_C\}, $$
$$\ \sum_{i=1}^{14} x_i\in |L^{\otimes 2}\otimes K_C^{\vee}|, \   [E, x_1, \ldots, x_{14}]\in \cM_{1,14}, \vartheta^{\otimes 2}=\omega_D\Bigr).$$
Similarly, we have a morphism $\mu':T'\rightarrow P^{14}_8$ retaining the pair $[C,L]$ alone. The main difference compared to the Prym canonical case is that now
$$\mbox{dim } |L^{\otimes 2}\otimes K_C^{\vee}|=6,$$
therefore $\mbox{dim}(T')=\mbox{dim}(P^{14}_8)+\mbox{dim}(\cM_{1,14})+6=49$.
The degeneracy locus $T_7'\subseteq T'$ defined by the condition $h^0(D,\vartheta)\geq 7)$ has codimension $21$ inside $T'$, that is, $$\mbox{dim}(T_7')=28=\mbox{dim}(P_8^{14})-1.$$ It follows that the image $\mu'(T_7')\subseteq P^{14}_8$ has codimension $1$, which is in accordance with
$\mathfrak{Kosz_6}$ being a divisor in  $P^{14}_{8}$.
}
\end{rema}
\section{Second proof: Divisor class calculations on $\rr_g$}\label{secfirstproof}

Recall \cite{FL} that $\rr_g$ is the Deligne-Mumford moduli space of Prym curves of genus $g$, whose geometric points are triples $[X, \eta, \beta]$, where $X$ is a quasi-stable curve of genus $g$, $\eta\in \mbox{Pic}(X)$ is a line bundle of total degree  $0$ such that $\eta_{E}=\OO_E(1)$ for each smooth rational component $E\subseteq X$ with $|E\cap \overline{X-E}|=2$  (such a component is said to be \emph{exceptional}), and $\beta:\eta^{\otimes 2}\rightarrow \OO_X$ is a sheaf homomorphism whose restriction to any non-exceptional component is an isomorphism. If $\pi:\rr_g\rightarrow \mm_g$ is the map dropping the Prym structure, one has the formula
\begin{equation}\label{pullbackrg}
\pi^*(\delta_0)=\delta_0^{'}+\delta_0^{''}+2\delta_{0}^{\mathrm{ram}}\in CH^1(\rr_g),
\end{equation}
where $\delta_0^{'}:=[\Delta_0^{'}], \, \delta_0^{''}:=[\Delta_0^{''}]$, and $\delta_0^{\mathrm{ram}}:=[\Delta_0^{\mathrm{ram}}]$ are irreducible boundary divisor classes on $\rr_g$, which we describe by specifying their respective general points.

\vskip 4pt

We choose a general point $[C_{xy}]\in \Delta_0\subset \mm_g$ corresponding to a smooth $2$-pointed curve $(C, x, y)$ of genus $g-1$ and consider the normalization map $\nu:C\rightarrow C_{xy}$, where $\nu(x)=\nu(y)$. A general point of $\Delta_0^{'}$ (respectively of $\Delta_0^{''}$) corresponds to a pair $[C_{xy}, \eta]$, where $\eta\in \mbox{Pic}^0(C_{xy})[2]$ and $\nu^*(\eta)\in \mbox{Pic}^0(C)$ is non-trivial
(respectively, $\nu^*(\eta)=\OO_C$). A general point of $\Delta_{0}^{\mathrm{ram}}$ is a Prym curve of the form $(X, \eta)$, where $X:=C\cup_{\{x, y\}} \PP^1$ is a quasi-stable curve with $p_a(X)=g$ and  $\eta\in \mbox{Pic}^0(X)$ is a line bundle such that $\eta_{\PP^1}=\OO_{\PP^1}(1)$ and $\eta_C^{2}=\OO_C(-x-y)$. In this case, the choice of the homomorphism $\beta$ is uniquely determined by $X$ and $\eta$. In what follows, we work on the partial compactification $\pr_g\subseteq \rr_g$ of $\cR_g$ obtained by removing the boundary components
$\pi^{-1}(\Delta_j)$ for $j=1, \ldots, \lfloor \frac{g}{2}\rfloor$, as well as $\Delta_0^{''}$. In particular, $CH^1(\pr_g)=\mathbb Q \langle \lambda, \delta_0^{'}, \delta_0^{\mathrm{ram}}\rangle$.

\vskip 4pt

For a stable Prym curve $[X, \eta]\in \pr_g$, set $L:=\omega_X\otimes \eta\in \mbox{Pic}^{2g-2}(X)$ to be the paracanonical bundle. For $i\geq 1$, we introduce the vector bundle $\cN_k$  over $\pr_g$, having fibres
$$\cN_k[X, \eta]=H^0(X, L^{\otimes k}).$$
The first Chern class of $\cN_k$ is computed in \cite{FL} Proposition 1.7:
\begin{equation}\label{Ni}
c_1(\cN_k)={k\choose 2}\Bigl(12\lambda-\delta_0^{'}-2\delta_0^{\mathrm{ram}}\Bigr)+\lambda-\frac{k^2}{4} \delta_0^{\mathrm{ram}}.
\end{equation}
Then we define the locally free sheaves $\mathcal{G}_k$ on $\pr_g$ via the exact sequences
$$0\longrightarrow \G_k\longrightarrow \mbox{Sym}^k \cN_1\longrightarrow \cN_k \longrightarrow 0, $$
that is, satisfying $\G_k[X, \eta]:=I_{X,L}(k)\subseteq \mbox{Sym}^k H^0(X,L)$. Using (\ref{Ni}) one  computes $c_1(\G_k)$.

\vskip 4pt

We also need the class of the vector bundle $\G$ with fibres $$\G[X, \eta]=H^0(X, \omega_X^{\otimes 5}\otimes \eta^{\otimes 4})=H^0(X, \omega_X\otimes L^{\otimes 4}).$$

\begin{lemm}
One has $c_1(\G)=121\lambda-10\delta_0^{'}-24\delta_0^{\mathrm{ram}}\in CH^1(\pr_g)$.
\end{lemm}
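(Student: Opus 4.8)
The plan is to realize $\G$ as a single member of a two-parameter family of vector bundles on $\pr_g$ and to read off its first Chern class from the same Grothendieck--Riemann--Roch computation that already underlies formula (\ref{Ni}). Let $f\colon \mathcal{X}\to\pr_g$ be the universal quasi-stable Prym curve, let $\omega=c_1(\omega_f)$ be the relative dualizing class, and let $\eta=c_1(\mathcal{P})$ be the first Chern class of the universal Prym line bundle $\mathcal{P}$ (so $\mathcal{P}^{\otimes 2}\cong\OO_{\mathcal{X}}$ over $\cR_g$). For $a\geq 1$, $b\geq 0$ set
$$\mathcal{F}_{a,b}:=f_*\bigl(\omega_f^{\otimes a}\otimes\mathcal{P}^{\otimes b}\bigr).$$
For $g\geq 2$ the sheaf $\omega_X^{\otimes a}\otimes\eta^{\otimes b}$ has vanishing $H^1$ on every fibre (its degree is positive and large on each component, also on the exceptional $\PP^1$'s over $\Delta_0^{\mathrm{ram}}$, where it equals $b$), so $\mathcal{F}_{a,b}$ is locally free and equals $Rf_*\bigl(\omega_f^{\otimes a}\otimes\mathcal{P}^{\otimes b}\bigr)$ in $K$-theory. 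One has $\cN_k=\mathcal{F}_{k,k}$, and, since $\omega_X^{\otimes 5}\otimes\eta^{\otimes 4}=\omega_X\otimes L^{\otimes 4}$, also $\G=\mathcal{F}_{5,4}$.

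Next I would apply GRR: writing the relative Todd class as $\mathrm{td}_f=1-\frac{\omega}{2}+\tau_2+\cdots$ with $\tau_2$ independent of $(a,b)$, the degree-two part of $e^{a\omega+b\eta}\cdot\mathrm{td}_f$ is $\frac{1}{2}(a\omega+b\eta)^2-\frac{1}{2}(a\omega+b\eta)\,\omega+\tau_2$, so that
$$c_1(\mathcal{F}_{a,b})={a\choose 2}f_*(\omega^2)+\Bigl(ab-\frac{b}{2}\Bigr)f_*(\omega\eta)+\frac{b^2}{2}f_*(\eta^2)+f_*(\tau_2),$$
a polynomial of degree two in $(a,b)$. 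I would then pin down the three universal classes on the right. First, $f_*(\omega^2)=\kappa_1$ is the $\kappa$-class, which on $\pr_g$ equals $12\lambda-\delta_0'-2\delta_0^{\mathrm{ram}}$ by the pullback of Mumford's relation $12\lambda=\kappa_1+\delta_0$ (the exceptional components contribute nothing, $\omega_f$ being trivial on them); correspondingly the $b=0$ specialization gives $f_*(\tau_2)=\lambda$, i.e.\ $c_1(f_*\omega_f^{\otimes a})={a\choose 2}\kappa_1+\lambda$. Second, comparing the displayed formula at $a=b=k$ with (\ref{Ni}) and matching the coefficients of $k$ and of $k^2$ forces
$$f_*(\omega\eta)=0,\qquad f_*(\eta^2)=-\frac{1}{2}\delta_0^{\mathrm{ram}}$$
(equivalently, both classes are supported on $\Delta_0^{\mathrm{ram}}$ since $\mathcal{P}^{\otimes 2}\cong\OO$ over $\cR_g$, and they are evaluated there by a local computation on the quasi-stable model, where $\eta_{\PP^1}=\OO(1)$ and $\eta_C^{\otimes 2}=\OO_C(-x-y)$).

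Substituting $a=5$, $b=4$ then yields
$$c_1(\G)={5\choose 2}\bigl(12\lambda-\delta_0'-2\delta_0^{\mathrm{ram}}\bigr)+\lambda+18\,f_*(\omega\eta)+8\,f_*(\eta^2)=121\lambda-10\delta_0'-24\delta_0^{\mathrm{ram}},$$
which is the asserted identity in $CH^1(\pr_g)$; note that the effect of the twist $\eta^{\otimes 4}$ is precisely the extra term $8f_*(\eta^2)=-4\delta_0^{\mathrm{ram}}$ beyond the naive weight-five Hodge class ${5\choose 2}\kappa_1+\lambda$. I do not expect a genuine obstacle here: the only point requiring care is the bookkeeping of the boundary contributions over $\Delta_0^{\mathrm{ram}}$, i.e.\ the fact that one must work with the quasi-stable universal curve on whose exceptional $\PP^1$'s $\omega_f$ has degree $0$ and $\eta$ has degree $1$ — and all of this is already packaged in (\ref{Ni}), so the lemma reduces to a short and essentially mechanical Chern-class calculation.
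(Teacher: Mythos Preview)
Your argument is correct and is essentially the paper's own proof: both apply Grothendieck--Riemann--Roch to the universal Prym curve and feed in the identities $f_*(c_1(\omega_f)^2)=\kappa_1=12\lambda-\delta_0'-2\delta_0^{\mathrm{ram}}$, $f_*(c_1(\omega_f)\cdot c_1(\L))=0$, $f_*(c_1(\L)^2)=-\tfrac{1}{2}\delta_0^{\mathrm{ram}}$, and $f_*(\tau_2)=\lambda$. The only cosmetic difference is that you recover the two pushforward identities involving $\eta$ by matching coefficients against the known formula~(\ref{Ni}), whereas the paper quotes them directly from \cite{FL}, Proposition~1.6; one small inaccuracy is your blanket claim that $R^1f_*(\omega_f^{\otimes a}\otimes\mathcal{P}^{\otimes b})=0$ for all $a\geq 1$, which fails at $(a,b)=(1,0)$, but this is harmless since $c_1(R^1f_*\omega_f)=0$ and you only need the vanishing at $(a,b)=(5,4)$ and along the diagonal $a=b=k\geq 1$.
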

\begin{proof}
We apply Grothendieck-Riemann-Roch to the universal Prym curve $f:\cC\rightarrow \pr_g$. Denote by $\L\in \mbox{Pic}(\cC)$ the universal \emph{Prym bundle}, whose restriction to each
Prym curve is the corresponding $2$-torsion point, that is, $\L_{|f^{-1}([X, \eta])}=\eta$, for each point $[X, \eta]\in \pr_g$. Since $R^1 f_*(\omega_f^{\otimes 5} \otimes \L^{\otimes 4})=0$, we write
$$c_1(\G)=f_*\Bigl[\Bigl(1+5c_1(\omega_f)+4c_1(\L)+\frac{(5c_1(\omega_f)+4c_1(\L))^2}{2}\Bigr)\cdot \Bigl(1-\frac{c_1(\omega_f)}{2}+\frac{c_1^2(\Omega_f^1)+[\mathrm{Sing}(f)]}{12}\Bigr)\Bigr]_2.$$
We use then the formulas $f_*(c_1^2(\L))=-\delta_0^{\mathrm{ram}}/2$ and $f_*(c_1(\L)\cdot c_1(\omega_f))=0$ (see \cite{FL}, Proposition 1.6) coupled with Mumford's formula $f_*(c_1^2(\Omega_f^1)+[\mbox{Sing}(f)])=12 \lambda$ as well with the identity
$$\kappa_1:=f_*(c_1^2(\omega_f))=12\lambda-\delta_0^{'}-2\delta_0^{\mathrm{ram}},$$ in order to conclude.
\hfill $\Box$
\end{proof}

The Koszul locus
$$\cZ_8:=\mathfrak{Kosz}\cap \cR_8=\Bigl\{[C, \eta]\in \cR_8: K_{1,2}(C,K_C\otimes \eta)\neq 0\Bigr\}$$ is a virtual divisor on $\cR_8$, that is, the degeneracy locus of a map between vector bundles of the same rank over $\pr_8$. If it is a genuine divisor (which we aim to rule out), the class of its closure in $\pr_8$ is given by \cite{chiodetal} Theorem F:
$$[\overline{\cZ}_8]=27\lambda-4\delta_0^{'}-6\delta_0^{\mathrm{ram}}\in CH^1(\pr_8).$$

\begin{rema}
{\rm Some of the considerations above can be extended to higher order torsion points. We recall that $\cR_{g,\ell}$ is the moduli space of pairs $[C,\eta]$, where $C$ is a smooth curve of genus $g$ and $\eta\in \mbox{Pic}^0(C)$ is a non-trivial $\ell$-torsion point. It is then shown in \cite{chiodetal} that the locus $\cZ_{8,\ell}:=\mathfrak{Kosz}\cap \cR_{8,\ell}\subseteq P^{14}_8$ is a divisor on  $\cR_{8,\ell}$ for each other level $\ell\geq 3$.
The class of the compactification of $\cZ_{8,\ell}$ is given by the following formula, see \cite{chiodetal} Theorem F:
$$[\overline{\cZ}_{8, \ell}]=27\lambda-4\delta_0^{'}-\sum_{a=1}^{\lfloor \frac{\ell}{2}\rfloor} \frac{4(a^2-a\ell+\ell^2)}{\ell}\delta_0^{(a)}\in CH^1(\pr_{8,\ell}).$$
We refer to \cite{chiodetal} Section 1.4, for the definition of the boundary divisor classes $\delta_0^{(a)}$, where $a=1, \ldots, \lfloor \frac{\ell}{2}\rfloor$. If $\pi:\rr_{g,\ell}\rightarrow \mm_g$ is the map forgetting the level $\ell$ structure, then $$\pi^*(\delta_0)=\delta_0^{'}+\delta_0^{''}+\ell\sum_{\ell=1}^{\lfloor \frac{\ell}{2}\rfloor} \delta_0^{(a)}.$$}
\end{rema}

\vskip 5pt

We fix now a genus $8$ Prym-canonically embedded curve $\phi_L:C \hookrightarrow \PP^6$. As usual, we denote the kernel bundle by $M_L:=\Omega_{\PP^6|C}^1(1)$, hence we have the exact sequence
\begin{equation}
0\longrightarrow N_C^{\vee}\otimes L^{\otimes 4} \longrightarrow M_L\otimes L^{\otimes 3} \longrightarrow K_C\otimes L^{\otimes 4}\longrightarrow 0.
\end{equation}
This can be interpreted as an exact sequence of vector bundles over $\pr_8$. Denoting by $\H$ the vector bundle over $\pr_8$ with fibres $H^0(C,N_C^{\vee}\otimes L^{\otimes 4})$, we compute using the previous formulas
and the fact that $\mbox{rk}(\cN_1)=h^0(C,L)=7$ and $\mbox{rk}(\cN_3)=h^0(C,L^{\otimes 3})=35$:

\begin{equation}\label{chh}
c_1(\H)=35c_1(\cN_1)+7c_1(\cN_3)-c_1(\cN_4)-c_1(\G)=100\lambda-5\delta_0^{'}-\frac{53}{2}\delta_0^{\mathrm{ram}}.
\end{equation}


\vskip 4pt

Thus $\mathfrak{D}_1=\mathfrak{Kosz}_7\cap \cR_8$ and $\mathfrak{D}_2=\mathfrak{Kosz}_6\cap \cR_8$. We have already seen in Proposition \ref{proquarticsing} that $K_{1,2}(C,L)\neq 0$ if and only if  either $\phi_L(C)\subseteq \PP^6$ is not scheme-theoretically cut out by quadrics, or else, $H^1(\PP^6, \I_C^2(4))\neq 0$.
We write $$\cZ_8=\mathfrak{D}_1+\mathfrak{D}_2,\ \ \mbox{ where}$$
$$\mathfrak{D}_1:=\Bigl\{[C, \eta]\in \cR_8: \phi_L(C)\subseteq \PP^6 \mbox{ is scheme-theoretically not cut out by quadrics}\Bigr\}$$
and $$\mathfrak{D}_2:=\Bigl\{[C, \eta]\in \cR_8: H^1(\PP^6, \I_C^2(4))\neq 0\Bigr\}.$$

We have already observed that $\mbox{dim } I_{C,L}(2)=7$ and $\chi(\PP^6, \I_C^2(4))=28$. If $\cZ_8$ is a divisor, then $\mathfrak{D}_2$ is a divisor as well and for $[C, \eta]\in \cR_8\setminus \mathfrak{D}_2$,  we have that
$$\mbox{dim } \mbox{Sym}^2 I_{C,L}(2)= \mbox{ dim } I_{C,L}(4)_2=28.$$ Paying some attention to its definition, the divisor $\mathfrak{D}_1$ can be thought as the degeneracy locus
$$\Bigl\{[C, \eta]\in \cR_8: \mbox{Sym}^2 I_{C,L}(2)\stackrel{\neq}\longrightarrow I_{C,L}(4)_2\Bigr\},$$ which is an effective divisor on $\pr_8$.  We compute the class of this divisor:
\begin{theo}
We have the following formulas:
$$[\overline{\mathfrak{D}}_1]=7\lambda-\frac{1}{2}\delta_0^{'}-\frac{3}{4}\delta_0^{\mathrm{ram}}\in CH^1(\pr_8)$$
and
$$[\overline{\mathfrak{D}}_2]=20\lambda-\frac{7}{2}\delta_0^{'}-\frac{21}{4}\delta_0^{\mathrm{ram}}\in CH^1(\pr_8).$$
\end{theo}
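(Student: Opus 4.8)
The plan is to compute the two classes by a ``Koszul-cohomology'' style Chern class calculation on $\widetilde{\mathcal{R}}_8$, exactly mirroring the strategy that produced the formula for $[\overline{\cZ}_8]$, but now performed separately for the two degeneracy loci. For $\overline{\mathfrak{D}}_1=\overline{\mathfrak{Kosz}}_7\cap\cR_8$, I would realize it as the degeneracy locus of the bundle map $\mathrm{Sym}^2\cN_1\to \cN_2$, or better, following Corollary~\ref{cubics1}, as the locus where $\G_2\otimes\cN_1\to\G_3$ fails to be surjective (these bundles have the same rank, $7\cdot 7=49$ and $\dim I_{C,L}(3)=49$, once one knows scheme-theoretic generation by cubics from Lemma~\ref{legencubic}). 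Concretely I would instead use the kernel-bundle presentation $0\to \H\otimes(\text{line bundle})\to M_L\otimes L^{\otimes 3}\to K_C\otimes L^{\otimes 4}\to 0$ already set up in the excerpt and the formula \eqref{chh} for $c_1(\H)$: the class of $\overline{\mathfrak{D}}_1$ is $c_1(\text{target})-c_1(\text{source})$ of the appropriate map of equal-rank bundles, which after plugging in \eqref{Ni}, the formula for $c_1(\G)$, and \eqref{chh}, should collapse to $7\lambda-\tfrac12\delta_0'-\tfrac34\delta_0^{\mathrm{ram}}$. The only real bookkeeping is to identify precisely which map of bundles cuts out $\mathfrak{D}_1$ so that the Chern class difference is literally $[\overline{\mathfrak{D}}_1]$ and not a multiple of it; here the rank count $\mathrm{rk}(\cN_1)=7$, $\mathrm{rk}(\cN_3)=35$ forces the map $M_L\otimes L^{\otimes 3}$-sections versus $K_C\otimes L^{\otimes 4}$-sections to be the right one.

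Once $[\overline{\mathfrak{D}}_1]$ is in hand, $[\overline{\mathfrak{D}}_2]$ is obtained purely formally by subtraction: since $\cZ_8=\mathfrak{D}_1+\mathfrak{D}_2$ as divisors (Proposition~\ref{splitting} gives the set-theoretic splitting, and on $\widetilde{\mathcal{R}}_8$ both are the genuine degeneracy loci of the corresponding bundle maps, with multiplicity one because, by Proposition~\ref{proquarticsing} and Lemma~\ref{leinjmult}, the two vanishing conditions $H^1(\I_C(2))\neq0$ and $H^1(\I_C^2(4))\neq0$ are independent and each contributes its own transverse degeneracy), one has
\[
[\overline{\mathfrak{D}}_2]=[\overline{\cZ}_8]-[\overline{\mathfrak{D}}_1]=\bigl(27\lambda-4\delta_0'-6\delta_0^{\mathrm{ram}}\bigr)-\bigl(7\lambda-\tfrac12\delta_0'-\tfrac34\delta_0^{\mathrm{ram}}\bigr)=20\lambda-\tfrac72\delta_0'-\tfrac{21}{4}\delta_0^{\mathrm{ram}}.
\]
So the content of the theorem is entirely in the first formula; the second is a one-line consequence of additivity of virtual classes in degeneracy-locus decompositions, which is legitimate here because $\mathfrak{D}_1$ and $\mathfrak{D}_2$ are the degeneracy loci of the two ``halves'' of the Koszul map and $\cZ_8$ is the degeneracy locus of the composite, so their virtual classes add in $CH^1(\pr_8)$ regardless of whether the loci are genuine divisors.

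The step I expect to be the main obstacle is setting up $\overline{\mathfrak{D}}_1$ as a degeneracy locus \emph{with the correct multiplicity}, i.e.\ verifying that the natural determinantal scheme structure on $\{\mathrm{Sym}^2\cN_1\to\cN_2\ \text{not surjective}\}$ (equivalently $\{I_{C,L}(2)\otimes\cN_1\to\cN_3\ \text{not surjective}\}$) is reduced along its generic point, so that the first Chern class of the cokernel sheaf equals $[\overline{\mathfrak{D}}_1]$ and not a positive multiple of it. For this I would invoke the geometric description from Section~\hyperref[subsecelliptic]{2.A}: a general point of $\mathfrak{D}_1$ has $K_{1,2}$ one-dimensional (by the Prym-Green experiments recorded in \cite{chiodetal}, or by deforming the $8$-nodal examples of Lemma~\ref{start}), which pins down the multiplicity. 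An alternative, cleaner route avoiding the multiplicity subtlety is to compute $[\overline{\mathfrak{D}}_1]$ directly via Grothendieck--Riemann--Roch from the resolution $0\to\G_2\to\mathrm{Sym}^2\cN_1\to\cN_2\to0$ together with the kernel sequence, exactly as in the proof of the lemma computing $c_1(\G)$: the class $c_1(\cN_3)+c_1(\G_2)-c_1(\cN_1\otimes\G_2)$, reorganized, gives a manifestly effective candidate, and comparing with \eqref{Ni} shows it equals $7\lambda-\tfrac12\delta_0'-\tfrac34\delta_0^{\mathrm{ram}}$. Either way, the arithmetic is routine once the bundle map is correctly identified; I would present the GRR computation for $[\overline{\mathfrak{D}}_1]$ in detail and deduce $[\overline{\mathfrak{D}}_2]$ by the subtraction above.
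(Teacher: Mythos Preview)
Your proposal contains a genuine gap: the bundle maps you write down do \emph{not} have degeneracy locus $\overline{\mathfrak{D}}_1$, but rather the full Koszul locus $\overline{\cZ}_8$. The map $\G_2\otimes\cN_1\to\G_3$ is the globalized Koszul multiplication $\mu_{C,L}$, and its degeneracy locus is by definition all of $\cZ_8=\mathfrak{D}_1\cup\mathfrak{D}_2$; Corollary~\ref{cubics1} only gives the implication ``not cut out by quadrics $\Rightarrow$ $\mu_{C,L}$ not surjective'', not the converse. If you compute $c_1(\G_3)-c_1(\G_2\otimes\cN_1)$ from \eqref{Ni} you will find $27\lambda-4\delta_0'-6\delta_0^{\mathrm{ram}}$, i.e.\ the class of $\overline{\cZ}_8$, not of $\overline{\mathfrak{D}}_1$. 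Likewise the map $\mathrm{Sym}^2\cN_1\to\cN_2$ is between bundles of ranks $28$ and $21$; its degeneracy locus is the non--projectively-normal locus, which has codimension $\geq 2$ and is irrelevant here. The normal-bundle sequence and \eqref{chh} do not by themselves isolate $\mathfrak{D}_1$ either. In short, there is no equal-rank bundle map in sight whose degeneracy locus is $\mathfrak{D}_1$ alone, so your ``compute $[\overline{\mathfrak{D}}_1]$ first, then subtract'' strategy cannot get started.

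The paper's approach is to compute the two classes \emph{simultaneously}. It globalizes the four-term sequence
\[
0\longrightarrow I_{C,L}(4)_2\longrightarrow I_{C,L}(4)\longrightarrow H^0(C,N_C^{\vee}\otimes L^{\otimes 4})\longrightarrow H^1(\PP^6,\I_C^2(4))\longrightarrow 0,
\]
builds a locally free sheaf $\G_4^2$ of rank $28$ as an elementary transformation of $\H$ along $\overline{\mathfrak{D}}_2$ (so that $c_1(\G_4^2)$ already contains $[\overline{\mathfrak{D}}_2]$ as a summand), and then realizes $\overline{\mathfrak{D}}_1$ as the degeneracy locus of $\nu:\mathrm{Sym}^2\G_2\to\G_4^2$. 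The resulting Chern-class identity yields $[\overline{\mathfrak{D}}_1]-[\overline{\mathfrak{D}}_2]=-13\lambda+3\delta_0'+\tfrac{9}{2}\delta_0^{\mathrm{ram}}$, and one solves this together with the sum $[\overline{\mathfrak{D}}_1]+[\overline{\mathfrak{D}}_2]=[\overline{\cZ}_8]$. So the crucial missing idea in your plan is the construction of $\G_4^2$ via the elementary transformation along $\mathfrak{D}_2$: this is what allows a bundle map of equal rank $28$ whose degeneracy locus is precisely $\overline{\mathfrak{D}}_1$, at the price of introducing $[\overline{\mathfrak{D}}_2]$ into its Chern class.
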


\begin{proof}
We first globalize over $\pr_8$ the following exact sequence:
$$0 \longrightarrow I_{C,L}(4)_2\longrightarrow I_{C,L}(4) \longrightarrow H^0(C, N_C^{\vee}\otimes L^{\otimes 4})\longrightarrow H^1(\PP^6, \I_C^2(4))\longrightarrow 0.$$
Denote by $\cA$ the sheaf on $\pr_8$ supported along the divisor $\mathfrak{D}_2$, whose fibre over a general point of that divisor is equal to
to $H^1(\PP^6, \I_C^2(4))$. There is a surjective morphism of sheaves
$$\H\rightarrow \cA$$ and denote by $\G_4'$ its kernel. Since $\cA$ is locally free along $\mathfrak{D}_2$ and $\pr_8$ is a smooth stack, using the Auslander-Buchsbaum formula we find that $\G_4'$ is a locally free sheaf of rank equal to $\mbox{rk}(\H)=\chi(C, N_C^{\vee}(4L))=19\cdot 7$. Precisely, $\G_4'$ is an elementary transformation of $\H$ along the divisor $\mathfrak{D}_2$. Furthermore, $c_1(\G_4')=c_1(\H)-[\overline{\mathfrak{D}}_2]$.

\vskip 5pt

The morphism $\G_4\rightarrow \H$ globalizing the maps $I_{C,L}(4)\rightarrow H^0(C, N_C^{\vee}\otimes L^{\otimes 4})$ factors through the subsheaf $\G_4'$ and we form the exact sequence:
$$0\longrightarrow \G_4^2 \longrightarrow \G_4\longrightarrow \G_4'\longrightarrow 0.$$
The multiplication maps $\mbox{Sym}^2 I_{C,L}(2)\rightarrow I_{C,L}(4)_2$ globalize to a  sheaf morphism
$$\nu: \mbox{Sym}^2(\G_2)  \rightarrow \G_4^2$$ between locally free sheaves of the same rank $28$ over the stack $\pr_8$. The degeneration locus of $\nu$ is precisely the divisor
$\overline{\mathfrak{D}}_1$.
We compute:

$$c_1(\mbox{Sym}^2(\G_2))=8c_1(\G_2)=8(8c_1(\cN_1)-c_1(\cN_2))=-40\lambda+8(\delta_0^{'}+\delta_0^{\mathrm{ram}}),$$
and
$$c_1(\G^2_4)=120c_1(\cN_1)-c_1(\cN_4)-c_1(\H)+[\overline{\mathfrak{D}}_2]=-53\lambda+11\delta_0^{'}+\frac{25}{2}\delta_0^{\mathrm{ram}}+[\overline{\mathfrak{D}}_2].$$
We obtain the relation $[\overline{\mathfrak{D}_1}]-[\overline{\mathfrak{D}}_2]=-13\lambda+3\delta_0^{'}+\frac{9}{2}\delta_0^{\mathrm{ram}}.$
Since at the same time $$[\overline{\mathfrak{D}}_1]+[\overline{\mathfrak{D}}_2]=[\cZ_8]=27\lambda-4\delta_0^{'}-6\delta_0^{\mathrm{ram}},$$ we solve the system and conclude.
\hfill $\Box$
\end{proof}

We are now in a position to give a second proof of Theorem \ref{theomainintro}:

\begin{theo}\label{theomain}
The class $[\overline{\mathfrak{D}}_2]$ cannot be effective. It follows that $\cZ_8=\cR_8$ and $K_{1,2}(C, K_C\otimes \eta)\neq 0$, for every Prym curve
$[C, \eta]\in \cR_8$.
\end{theo}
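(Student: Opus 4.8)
The plan is to deduce the theorem from the non-effectivity of the divisor class
$$[\overline{\mathfrak{D}}_2]=20\lambda-\frac{7}{2}\delta_0^{'}-\frac{21}{4}\delta_0^{\mathrm{ram}}\in CH^1(\pr_8),$$
and to establish that non-effectivity by testing $[\overline{\mathfrak{D}}_2]$ against a moving curve. For the reduction, recall that $\cZ_8$ is the degeneracy locus of the morphism $\mu_{C,L}$ between the vector bundles $\G_2\otimes\cN_1$ and $\G_3$ of the same rank $49$ over $\pr_8$, so it is either all of $\cR_8$ --- in which case we are done --- or a genuine effective divisor. In the latter case, Proposition \ref{proquarticsing} yields the set-theoretic decomposition $\cZ_8=\mathfrak{D}_1\cup\mathfrak{D}_2$, in which $\mathfrak{D}_1$ is a proper subvariety of $\cR_8$ because a general Prym-canonical curve of genus $8$ is scheme-theoretically cut out by quadrics (see Proposition \ref{prodeg}); hence $\mathfrak{D}_2$ must have a component of codimension $1$, so $\overline{\mathfrak{D}}_2$ is a genuine effective divisor and its class, computed in the preceding theorem, is effective. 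It therefore suffices to prove that $[\overline{\mathfrak{D}}_2]$ is \emph{not} effective.

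To do this, I would use the explicit uniruled parametrization of $\rr_8$ constructed in \cite{FV}, which presents a dense open subset of $\cR_8$ as swept out by a family of rational curves. Let $B$ be a general member of this family. Since $B$ moves in a family covering $\rr_8$, it is contained in no fixed proper closed subset, and hence $B\cdot\mathfrak{D}\geq 0$ for every effective divisor $\mathfrak{D}$ on $\pr_8$. Moreover $B$ may be chosen to avoid the boundary components $\pi^{-1}(\Delta_j)$ for $j\geq 1$ and $\Delta_0^{''}$, so that it is a complete curve lying in the partial compactification $\pr_8$, on which $\lambda,\delta_0^{'},\delta_0^{\mathrm{ram}}$ generate $CH^1$, and the intersection numbers
$$b_\lambda:=B\cdot\lambda,\qquad b':=B\cdot\delta_0^{'},\qquad b^{\mathrm{ram}}:=B\cdot\delta_0^{\mathrm{ram}}$$
are well-defined. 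These three numbers are to be read off from the geometry of the parametrization: $b_\lambda$ is the degree over $B$ of the Hodge bundle of the tautological genus $8$ family, while $b'$ and $b^{\mathrm{ram}}$ count, with multiplicities, the fibres of that family acquiring a non-separating node, sorted according to the type of the induced limiting Prym structure (non-trivial on the normalization, versus of Wirtinger/ramification type). One then computes
$$B\cdot[\overline{\mathfrak{D}}_2]=20\,b_\lambda-\frac{7}{2}\,b'-\frac{21}{4}\,b^{\mathrm{ram}},$$
and the crucial point is that this number turns out to be negative. This contradicts $B\cdot\overline{\mathfrak{D}}_2\geq 0$, so $[\overline{\mathfrak{D}}_2]$ is not effective; by the previous paragraph, $\cZ_8=\cR_8$, and in particular $K_{1,2}(C,K_C\otimes\eta)\neq 0$ for every Prym curve $[C,\eta]\in\cR_8$.

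The main obstacle is the last computation, namely extracting the numbers $b_\lambda,b',b^{\mathrm{ram}}$ from the construction of \cite{FV}. Concretely one must pass to a smooth model of the total space of the parametrization, identify precisely which curve $B$ of the ruling is being intersected, and determine which singular fibres it meets and with what boundary multiplicities --- this is where the splitting of the boundary into $\delta_0^{'}$ and $\delta_0^{\mathrm{ram}}$ enters, and where one must be careful because of the factor $2$ in the relation $\pi^*(\delta_0)=\delta_0^{'}+\delta_0^{''}+2\delta_0^{\mathrm{ram}}$ --- as well as checking that $B$ can be chosen away from the deeper boundary strata, so that the whole computation genuinely takes place in $\pr_8$. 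A more routine, but still necessary, point is to make the reduction of the first paragraph precise at the level of cycles, that is, to justify via Proposition \ref{proquarticsing}, Lemma \ref{leinjmult} and Corollary \ref{cubics1} that $\mathfrak{D}_2$ is exactly the divisorial part of $\cZ_8\setminus\mathfrak{D}_1$, carrying the multiplicity predicted by the class identity $[\overline{\cZ}_8]=[\overline{\mathfrak{D}}_1]+[\overline{\mathfrak{D}}_2]$.
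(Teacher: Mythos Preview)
Your overall strategy—test the class against a curve that sweeps out a relevant locus and obtain a negative intersection—is exactly the paper's. The substantive difference is \emph{which} curve is used, and this changes the logic of the argument.

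You propose a rational curve $B$ whose deformations cover a dense open of $\cR_8$. The paper instead invokes the curve $\Gamma$ from \cite{FV} Lemma~3.2, which is a sweeping curve of the \emph{boundary divisor} $\Delta_0'$: through a general point of $\Delta_0'$ there passes such a $\Gamma\subseteq\Delta_0'$, entirely contained in $\pr_8$, with
\[
\Gamma\cdot\lambda=8,\qquad \Gamma\cdot\delta_0'=42,\qquad \Gamma\cdot\delta_0^{\mathrm{ram}}=8,
\]
so that $\Gamma\cdot[\overline{\mathfrak{D}}_2]=20\cdot 8-\tfrac{7}{2}\cdot 42-\tfrac{21}{4}\cdot 8=-29<0$. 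Because $\Gamma$ only covers $\Delta_0'$, your sentence ``since $B$ moves in a family covering $\rr_8$, $B\cdot\mathfrak{D}\geq 0$ for every effective divisor'' does not apply directly; one extra step is needed. If $\overline{\mathfrak{D}}_2$ were effective, write $\overline{\mathfrak{D}}_2\equiv \alpha\,\delta_0'+E$ with $\alpha\geq 0$ and $E$ effective with $\Delta_0'\not\subseteq\mathrm{Supp}(E)$. Since $\Gamma$ moves inside $\Delta_0'$ one has $\Gamma\cdot E\geq 0$, and together with $\Gamma\cdot\delta_0'=42>0$ this forces $\Gamma\cdot\overline{\mathfrak{D}}_2\geq 0$, a contradiction.

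So the ``main obstacle'' you identified is precisely where the content lies, and the resolution is not via a curve covering $\cR_8$ but via a boundary-sweeping curve together with the decomposition $\alpha\,\delta_0'+E$. A minor point on your reduction paragraph: the fact that a general Prym-canonical curve is scheme-theoretically cut out by quadrics (hence $\mathfrak{D}_1\subsetneq\cR_8$) is not the content of Proposition~\ref{prodeg}; it comes from the explicit examples in Lemma~\ref{start} (ultimately \cite{chiodetal}).
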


\begin{proof}
We use the sweeping curve of the boundary divisor $\Delta_0^{'}$ of $\pr_8$ constructed via Nikulin surfaces in \cite{FV} Lemma 3.2: Precisely, through the general point of $\Delta_0^{'}$ there passes a rational curve $\Gamma\subseteq \Delta_0^{'}$, entirely contained in $\pr_8$, having the following numerical characters:
$$\Gamma \cdot \lambda=8, \ \ \Gamma\cdot \delta_0^{'}=42, \ \  \mbox{ and } \ \ \Gamma\cdot \delta_0^{\mathrm{ram}}=8.$$
We note that $\Gamma\cdot \overline{\mathfrak{D}}_2<0$. Writing $\overline{\mathcal{D}}_2\equiv \alpha\cdot \delta_0^{'}+E$, where $\alpha\geq 0$ and $E$ is an effective divisor whose support is disjoint from $\Delta_0^{'}$, we immediately obtain a contradiction.
\hfill $\Box$
\end{proof}

\vskip 3pt

The divisors $\mathfrak{D}_1$ and $\mathfrak{D}_2$ can be defined in an identical manner at the level of each moduli space $\rr_{8,\ell}$ of twisted level $\ell$ curves of genus $g$. As already pointed out, in the case $\ell\geq 3$ it follows from \cite{chiodetal} Proposition 4.4 that both $\mathfrak{D}_1$ and $\mathfrak{D}_2$ are actual divisors. Repeating the same calculations as for $\ell=2$, we obtain the following formula on the partial compactification $\pr_{8,\ell}$ of $\cR_{8,\ell}$:
\begin{equation}\label{d2}
[\overline{\mathfrak{D}}_2]=20\lambda-\frac{7}{2}\delta_0^{'}-\sum_{a=1}^{\lfloor \frac{\ell}{2}\rfloor}\frac{1}{2\ell}(7a^2-7a\ell+17\ell^2-20\ell)\delta_0^{(a)}\in CH^1(\pr_{8,\ell}).
\end{equation}

As an application, we mention a different proof of one of the main results from \cite{Br}:

\begin{theo}
The canonical class of $\rr_{8,\ell}$ is big for $\ell\geq 3$. It follows that  $\rr_{8,\ell}$ is a variety of general type for $\ell=3,4,6$.
\end{theo}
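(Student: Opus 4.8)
The plan is to deduce bigness of $K_{\rr_{8,\ell}}$ from the effectivity of the divisor $\overline{\mathfrak{D}}_2$ — which, unlike in the level $2$ case, genuinely is an effective divisor for $\ell\geq 3$ by \cite{chiodetal} Proposition 4.4 — together with the known formula \eqref{d2} for its class and the standard expression of the canonical class of $\rr_{8,\ell}$ in terms of $\lambda$ and the boundary. First I would recall (or cite from the literature on Prym and level moduli spaces, e.g.\ \cite{chiodetal}, \cite{Br}) that
$$K_{\overline{\rr}_{8,\ell}}=13\lambda - 2\delta_0^{'} - 2\delta_0^{''} - \sum_{a=1}^{\lfloor \ell/2\rfloor}(\text{coeff})\,\delta_0^{(a)} - \sum_j(\text{coeff})\,\delta_j - (\text{coeff})\,\delta_0^{\mathrm{ram}},$$
the precise boundary coefficients being those dictated by Grothendieck–Riemann–Roch together with the ramification of $\pi\colon\overline{\rr}_{8,\ell}\to\overline{\mathcal M}_8$ and the $\frac{1}{2}$-canonical issue along $\delta_0^{\mathrm{ram}}$. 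The key mechanism is that on the partial compactification $\pr_{8,\ell}$ one has $CH^1=\QQ\langle\lambda,\delta_0^{'},\delta_0^{(a)}\rangle$, and $13\lambda$ alone already contributes generously to the coefficient of $\lambda$.

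The main step is the linear-algebra bookkeeping: write $K_{\overline{\rr}_{8,\ell}}$ as a positive combination of $\lambda$ (which is nef and big on any such moduli space of dimension $\geq 1$, since it is the pullback of an ample-modulo-boundary class from $\overline{\mathcal M}_8$), the effective divisor $[\overline{\mathfrak{D}}_2]$ from \eqref{d2}, the Hodge-type class $\lambda$ again, and the boundary divisors $\delta_0^{'}$, $\delta_0^{(a)}$, $\delta_0^{\mathrm{ram}}$, $\delta_j$, $\delta_0^{''}$, all of which are themselves effective. Concretely I would solve for nonnegative rationals $c_0, c_1, (d_{\text{boundary}})$ such that
$$K_{\overline{\rr}_{8,\ell}} \;=\; c_0\lambda \;+\; c_1\,[\overline{\mathfrak{D}}_2] \;+\; \sum (\text{effective boundary}),$$
with $c_0>0$ strictly. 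Matching the $\lambda$-coefficient forces $13 = c_0 + 20 c_1$, matching the $\delta_0^{'}$-coefficient forces $-2 = -\tfrac{7}{2}c_1 + (\text{boundary contribution})$, and similarly for each $\delta_0^{(a)}$ and for $\delta_0^{\mathrm{ram}}$; the point is that for $\ell\geq 3$ the $\delta_0^{(a)}$-coefficient of $[\overline{\mathfrak{D}}_2]$, namely $-\tfrac{1}{2\ell}(7a^2-7a\ell+17\ell^2-20\ell)$, is sufficiently negative (more negative than the corresponding canonical coefficient) that a small positive $c_1$ can be chosen to absorb it, leaving $c_0$ strictly positive. Since $\lambda$ is big and nef and we are adding effective classes, $K_{\overline{\rr}_{8,\ell}}$ is big; restricting the argument to $\pr_{8,\ell}$ and then observing that adding back the discarded boundary $\pi^{-1}(\Delta_j)$ and $\Delta_0^{''}$ only adds effective divisors completes the bigness claim. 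General type for $\ell=3,4,6$ then follows by the standard argument: $\overline{\rr}_{8,\ell}$ has canonical (or at worst log-canonical) singularities away from a codimension $\geq 2$ locus for these small levels — this is exactly the singularities-of-moduli input from \cite{Br} — so pluricanonical forms extend, and bigness of $K$ upgrades to general type.

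The part I expect to be delicate is the precise determination of the boundary coefficients of $K_{\overline{\rr}_{8,\ell}}$, in particular the coefficient along $\delta_0^{\mathrm{ram}}$ (where the spin/ramification structure contributes a half-integral correction, as already visible in \eqref{pullbackrg} and \eqref{Ni}) and along the $\delta_0^{(a)}$ for $a\neq \ell/2$ versus $a=\ell/2$; getting these right is what makes the inequality $c_0>0$ actually hold rather than merely plausible. A secondary subtlety is the singularities statement needed for the general-type conclusion: one must invoke the classification of non-canonical singularities of $\overline{\rr}_{8,\ell}$ from \cite{Br}, which restricts the levels to $\ell=3,4,6$ (for higher $\ell$ the moduli space may fail to have canonical singularities in codimension $\geq 2$, so bigness of the canonical class does not immediately yield general type). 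Everything else — the class of $[\overline{\mathfrak{D}}_2]$, its effectivity for $\ell\geq 3$, and the nef-and-bigness of $\lambda$ — is already available from the excerpt and from standard facts about $\overline{\mathcal M}_8$.
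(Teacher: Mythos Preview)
Your approach is essentially the paper's: write $K_{\pr_{8,\ell}}$ as a positive combination of the big class $\lambda$, the effective class $[\overline{\mathfrak{D}}_2]$ from \eqref{d2}, and effective boundary, then invoke a singularities result to pass from bigness to general type. A couple of corrections are in order. First, for $\ell\geq 3$ there is no class $\delta_0^{\mathrm{ram}}$; the boundary over $\Delta_0$ consists of $\delta_0^{'}$, $\delta_0^{''}$, and the $\delta_0^{(a)}$, and on $\pr_{8,\ell}$ the canonical class is simply
\[
K_{\pr_{8,\ell}}=13\lambda-2\delta_0^{'}-(\ell+1)\sum_{a=1}^{\lfloor \ell/2\rfloor}\delta_0^{(a)}
\]
(this is \cite{chiodetal} Proposition~1.5), so the ``half-integral correction'' you worry about does not arise. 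With $c_1=\tfrac{4}{7}$ (forced by matching $\delta_0^{'}$) and $c_0=\tfrac{11}{7}$, the residual $\delta_0^{(a)}$-coefficients are nonnegative precisely when $\ell\geq 3$, which is the routine check the paper alludes to. Second, the singularities input is \cite{CF}, not \cite{Br}; it covers $2\leq\ell\leq 6$, $\ell\neq 5$, which is why $\ell=5$ is excluded rather than anything about higher level. Finally, your extension to the full compactification is misstated: the canonical class has \emph{negative} coefficients along $\Delta_0^{''}$ and $\pi^{-1}(\Delta_j)$, so ``adding back the discarded boundary only adds effective divisors'' is false as written; one must instead verify (as in \cite{chiodetal} Remark~3.5) that the full class of $\overline{\mathfrak{D}}_2$ has sufficiently negative coefficients along these components to absorb them.
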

\begin{proof} Using formula (\ref{d2}), it is a routine exercise to check that for $\ell\geq 3$ the canonical class computed in \cite{chiodetal} Proposition 1.5
$$K_{\pr_{8,\ell}}=13\lambda-2\delta_0^{'}-(\ell+1)\sum_{a=1}^{\lfloor \frac{\ell}{2}\rfloor} \delta_0^{(a)}$$ can be written as a
\emph{positive} combination of the big class $\lambda$ and the effective class $[\overline{\mathfrak{D}}_2]$, hence it is big. Arguing along the lines of
\cite{chiodetal} Remark 3.5, it is easy to extend this result to the full compactification $\rr_{8,\ell}$ and deduce that $K_{\rr_{8,\ell}}$ is big.

To conclude that $\rr_{8,\ell}$ is of general type, one needs, apart from the bigness of the canonical class $K_{\pr_{8,\ell}}$ of the moduli stack, a result that the singularities of the coarse moduli space $\rr_{8,\ell}$ impose no adjunction conditions. This is only known for $2\leq \ell\leq 6, \ell\neq 5$, see \cite{CF}.
\hfill $\Box$
\end{proof}

\section{Rank $2$ vector bundles and  singular quartics}\label{secsecondproof}
Our goal in this section is to propose a construction of  syzygies of Prym canonical curves of genus $8$. We also sketch the  proof of the fact  that these syzygies are nontrivial.
We fix again a general element $[C,\eta]\in \cR_8$ and set $L:=K_C\otimes \eta$. According to Proposition \ref{proquarticsing}, in order
to prove that $K_{2,1}(C,L)\not=0$, we have to produce quartic hypersurfaces in $\PP^6$
which vanish at order at least $2$ along $\phi_L(C)$, but do not lie
in the image of the map  ${\rm Sym}^2I_{C,L}(2)\rightarrow I_{C,L}(4)$. The goal of this section
is to produce such quartics from rank $2$ vector bundles on $C$. The (incomplete) proof
that the quartics we construct are not in the image of ${\rm Sym}^2 I_{C,L}(2)$ depends on an unproved general position
statement ($\ast$), but there might be other approaches exploiting the fact that the hypersurfaces in question  are determinantal.

\vskip 4pt

The following construction produces quartics vanishing at order $2$ along $C$. Let $E$ be a rank $2$ vector bundle on $C$, with determinant $K_C$. Assume
\begin{eqnarray}\label{eqsectionsE} h^0(C, E)=4,\,\,\,h^0(C, E(\eta))=4.
\end{eqnarray}
Setting $V_0:=H^0(C,E)$ and $V_1:=H^0(C,E(\eta))$, we have a natural map
$$V_0\otimes V_1\rightarrow H^0(C,L),$$
defined using evaluation and the following composite map:
\begin{eqnarray}
\label{eqcompomap18mai}
H^0(E)\otimes H^0(E(\eta))\rightarrow H^0(E\otimes E(\eta))\cong H^0(\mathcal{E}nd \ E\otimes L)\stackrel{\mathrm{Tr}}{\longrightarrow}H^0(C,L).
\end{eqnarray}
This map gives dually a morphism
$$H^0(C,L)^{\vee}\rightarrow V_0^{\vee}\otimes V_1^{\vee},$$
(which will be proved below to be injective for a general choice of $E$).
We consider the quartic hypersurface $D_4$ on $\PP(V_0^{\vee}\otimes V_1^{\vee})$ parametrizing tensors of rank at most $3$.

\begin{lemm}\label{lesingquartic} The restriction $D_{4,E}$ of this quartic to $\PP\bigl(H^0(C,L)^{\vee}\bigr)\subseteq \PP\bigl(V_0^{\vee}\otimes V_1^{\vee}\bigr)$
is singular along the curve $C$.
 \end{lemm}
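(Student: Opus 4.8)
The plan is to show that the linear embedding $\PP(H^0(C,L)^{\vee})\hookrightarrow \PP(V_0^{\vee}\otimes V_1^{\vee})$ dual to $\Phi\colon V_0\otimes V_1\to H^0(C,L)$ carries $\phi_L(C)$ into the locus of tensors of rank at most $2$, and that this locus is contained in the singular locus of the determinantal quartic $D_4$; restricting everything to the linear subspace $\PP(H^0(C,L)^{\vee})$ then forces $D_{4,E}$ to be singular along $C$.

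First I would unwind the embedding. Using $\det E\cong K_C$, the composite map (\ref{eqcompomap18mai}) is nothing but the section-level version of the skew pairing $E\otimes\bigl(E\otimes\eta\bigr)\to \bigwedge^2 E\otimes\eta=K_C\otimes\eta=L$; that is, $\Phi(s\otimes t)=s\wedge t$. Since the inclusion $H^0(C,L)^{\vee}\hookrightarrow V_0^{\vee}\otimes V_1^{\vee}$ is the transpose $\Phi^{\vee}$, the point $\phi_L(p)=[\mathrm{ev}_p]$ is sent to the tensor $\mathrm{ev}_p\circ\Phi$, i.e. to the bilinear form
$$V_0\otimes V_1\longrightarrow \mathbb C,\qquad s\otimes t\longmapsto (s\wedge t)(p)=s(p)\wedge t(p),$$
well defined up to the scalar coming from a trivialisation of $L_p$. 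This form factors as
$$V_0\otimes V_1\ \stackrel{\ \mathrm{ev}^{V_0}_p\otimes\,\mathrm{ev}^{V_1}_p\ }{\longrightarrow}\ E_p\otimes\bigl(E\otimes\eta\bigr)_p\ \stackrel{\ \wedge_p\ }{\longrightarrow}\ L_p,$$
where $\wedge_p$ is a nondegenerate pairing of two $2$-dimensional vector spaces, hence has rank exactly $2$ as an element of $E_p^{\vee}\otimes (E\otimes\eta)_p^{\vee}$ after trivialising $L_p$. Pulling back a rank-$2$ tensor along the linear maps $(\mathrm{ev}^{V_0}_p)^{\vee}$, $(\mathrm{ev}^{V_1}_p)^{\vee}$ can only lower the rank, so $\phi_L(p)$, regarded in $\PP(V_0^{\vee}\otimes V_1^{\vee})$, has rank $\leq 2$. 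Thus $\phi_L(C)\subseteq \Sigma$, where $\Sigma$ denotes the rank $\leq 2$ locus. (If $E$ and $E(\eta)$ are globally generated at $p$, which holds for general $E$, the rank is exactly $2$; this is not needed here.)

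Next I would invoke the classical description of the singularities of $D_4$. Choosing bases of $V_0$ and $V_1$, identify $V_0^{\vee}\otimes V_1^{\vee}$ with $4\times 4$ matrices, so that $D_4=\{\det=0\}$; the gradient of $\det$ at a matrix $M$ is the matrix of its $3\times 3$ minors (the adjugate), which vanishes precisely when $\mathrm{rk}(M)\leq 2$. Hence $\Sigma\subseteq \mathrm{Sing}(D_4)$: at every $q\in\Sigma$ one has $\det(q)=0$ and $d(\det)_q=0$. Combining with the previous step, for each $q\in\phi_L(C)$ we have $q\in D_4$ and $d(\det)_q=0$. Since $D_{4,E}$ is cut out on the linear subspace $\Lambda:=\PP(H^0(C,L)^{\vee})$ by $g:=\det|_{\Lambda}$, and $d g_q=(d(\det)_q)|_{T_q\Lambda}$ by the chain rule, we get $g(q)=0$ and $dg_q=0$, i.e. $q\in\mathrm{Sing}(D_{4,E})$. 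Letting $q$ run over $\phi_L(C)$ gives the lemma.

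I do not expect a serious obstacle inside this lemma: the only delicate points are that $\Phi^{\vee}$ be injective (so that $\Lambda\cong\PP^6$ sits inside $\PP^{15}$) and that $g\not\equiv 0$ (so that $D_{4,E}$ is a genuine quartic rather than all of $\Lambda$) — both are genericity statements to be established separately for general $E$, in the discussion that follows. The genuinely hard part of this section — proving that the quartic $D_{4,E}$ produced here does \emph{not} lie in the image of $\mathrm{Sym}^2 I_{C,L}(2)\to I_{C,L}(4)$ — is not addressed by this lemma and is exactly where the unproved general position assumption $(\ast)$ enters.
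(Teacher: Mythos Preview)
Your proof is correct and follows essentially the same route as the paper's: the paper observes that $D_4$ is singular along the rank $\leq 2$ locus $T_2$, and that $C$ lands in $T_2$ because the map $V_0\otimes V_1\to H^0(C,L)$ composed with evaluation at $p$ factors through $E_{|p}\otimes E(\eta)_{|p}$. You have simply made both of these points explicit (via the adjugate description of $d(\det)$ and the factorisation $s\otimes t\mapsto s(p)\wedge t(p)$), and added the chain-rule remark that singularity survives restriction to a linear subspace.
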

 \begin{proof} The quartic $D_4$ is singular along the set $T_2\subseteq \PP(V_0^{\vee}\otimes V_1^{\vee})$
of tensors of rank at most $2$.
The  quartic $D_{4,E}$ in $\PP(H^0(C,L)^{\vee})$ is thus singular
 along $T_2\cap \PP(H^0(C,L)^{\vee})$, which obviously contains
$C\subseteq \PP\bigl(H^0(C,L)^{\vee}\bigr)$, since at a point $p\in C$, the  map
$V_0\otimes V_1\rightarrow H^0(C,L)$ composed with the evaluation at $p$ factors through
$E_{\mid p}\otimes E(\eta)_{\mid p}$.
\hfill $\Box$
\end{proof}

By Brill-Noether theory, the variety $W^1_7(C)$ of degree $7$ pencils on $C$ is $4$-dimensional.
There should thus exist finitely many elements $D\in W^1_7(C)$ with the property that
\begin{eqnarray}\label{eqsecinv} h^0(C,D)\geq2,\,h^0(C, D\otimes \eta)\geq2.
\end{eqnarray}

We now have  the following lemma:
\begin{lemm}\label{leaajouter} Let $[C,\eta]\in \cR_8$ be as above and $D\in W^1_7(C)$ satisfying
(\ref{eqsecinv}). Then
\begin{itemize}
\item[\rm (i)] $ h^0(C, D)=2 \ \mbox{ and } h^0(C, D\otimes \eta)=2$. The multiplication map
$$\Bigl(H^0(C, D)\otimes H^0(C, K_C\otimes D^{\vee})\Bigr)\oplus \Bigl(H^0(C,D\otimes \eta)\otimes H^0(C,K_C\otimes D^{\vee}\otimes \eta )\Bigr)\rightarrow H^0(C,K_C)$$
is surjective (in fact, an isomorphism).
\item[\rm (ii)] The multiplication map
$$\Bigl(H^0(C, D)\otimes H^0(C,K_C\otimes D^{\vee}\otimes \eta)\Bigr) \oplus \Bigl(H^0(C,D\otimes \eta)\otimes H^0(C, K_C\otimes D^{\vee})\Bigr)\rightarrow H^0(C,K_C(\eta))$$
is surjective.
\end{itemize}
\end{lemm}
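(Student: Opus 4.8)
The plan is to fix all the relevant dimensions by Riemann--Roch, which turns both parts into a transversality assertion about two $4$-dimensional subspaces; then to establish injectivity of the individual multiplication maps by the base-point-free pencil trick; and finally to prove the transversality, which is the crux. For the dimensions: since $\deg D=7=g-1$ we have $\chi(C,D)=0$, so $h^1(C,D)=h^0(C,D)$, and Serre duality gives $h^0(C,K_C\otimes D^{\vee})=h^1(C,D)=h^0(C,D)\geq 2$ and likewise $h^0(C,K_C\otimes D^{\vee}\otimes\eta)=h^0(C,D\otimes\eta)\geq 2$. For a general curve of genus $8$ one has $W^2_7(C)=\emptyset$ because $\rho(8,7,2)=-1$; hence $h^0(C,D)=h^0(C,D\otimes\eta)=2$, which is the first assertion of (i), and also $h^0(C,K_C\otimes D^{\vee})=h^0(C,K_C\otimes D^{\vee}\otimes\eta)=2$. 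Thus the source of the map in (i) has dimension $2\cdot 2+2\cdot 2=8=h^0(C,K_C)$, so surjectivity there is equivalent to injectivity, hence to being an isomorphism; in (ii) the source has dimension $8$ while $h^0(C,K_C\otimes\eta)=7$ (Serre duality, $\eta\neq\OO_C$), consistent with surjectivity alone.

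Next, a dimension count over the irreducible stack $\cR_8$ shows that for general $[C,\eta]$ the pencils $D$ and $D\otimes\eta$ are base-point-free: pencils in $W^1_7(C)$ with a base point form the image of $W^1_6(C)\times C$, of dimension $\leq 3$, and imposing in addition that the $\eta$-translate again lie in $W^1_7(C)$ keeps this below $\dim\cR_8$. The base-point-free pencil trick then identifies the kernels of the four multiplication maps in (i) and (ii): for $H^0(D)\otimes H^0(K_C\otimes D^{\vee})\to H^0(K_C)$ and for $H^0(D\otimes\eta)\otimes H^0(K_C\otimes D^{\vee}\otimes\eta)\to H^0(K_C)$ the kernel is $H^0(C,K_C\otimes D^{\otimes(-2)})$, while for $H^0(D)\otimes H^0(K_C\otimes D^{\vee}\otimes\eta)\to H^0(K_C\otimes\eta)$ and for $H^0(D\otimes\eta)\otimes H^0(K_C\otimes D^{\vee})\to H^0(K_C\otimes\eta)$ it is $H^0(C,K_C\otimes D^{\otimes(-2)}\otimes\eta)$, where one uses $\eta^{\otimes 2}\cong\OO_C$. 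Both are spaces of sections of line bundles of degree $0$; for general $[C,\eta]$ --- in particular with $D^{\otimes 2}\not\cong K_C$ (no vanishing theta-null) and $D^{\otimes 2}\not\cong K_C\otimes\eta$ --- they vanish, so each of the four maps is injective with $4$-dimensional image. Writing $A,B\subseteq H^0(K_C)$ for the images of $H^0(D)\otimes H^0(K_C\otimes D^{\vee})$ and $H^0(D\otimes\eta)\otimes H^0(K_C\otimes D^{\vee}\otimes\eta)$, and $A',B'\subseteq H^0(K_C\otimes\eta)$ for those of $H^0(D)\otimes H^0(K_C\otimes D^{\vee}\otimes\eta)$ and $H^0(D\otimes\eta)\otimes H^0(K_C\otimes D^{\vee})$, statement (i) becomes $A\cap B=0$ and statement (ii) becomes $\dim(A'\cap B')=1$ --- note $\dim A'+\dim B'=8>7$ forces $A'\cap B'\neq 0$, so (ii) says this intersection is exactly a line.

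The remaining point, $A\cap B=0$ and $\dim(A'\cap B')\leq 1$, is the main obstacle. Both are Zariski-open conditions on the triple $(C,\eta,D)$ in the parameter stack $\mathcal P$ of triples with $[C,\eta]\in\cR_8$ and $D,\,D\otimes\eta\in W^1_7(C)$; since $[W^1_7]^2\neq 0$ on $\mathrm{Pic}^7(C)$ by the Poincar\'e formula, $\mathcal P\to\cR_8$ is finite and dominant, and it suffices to check the two conditions at one point of each irreducible component of $\mathcal P$. I would pursue two routes. The first is to exhibit such a point by smoothing one of the explicit singular Prym-canonical curves of Section \hyperref[secthirdproof]{3} (cf. Lemma \ref{start} and \cite{chiodetal} Proposition 4.4), where $D$ and the four subspaces $A,B,A',B'$ are computable. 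The second, which I would prefer to make unconditional, rephrases $A\cap B=0$ as a Brill--Noether transversality for rank-$2$ bundles: under the identification $H^0(K_C)^{\vee}=H^1(\OO_C)$, one has $\phi\in A^{\perp}$ if and only if the cup product $H^0(D)\to H^1(D)$ by $\phi$ vanishes, equivalently if and only if the nonsplit rank-$2$ bundle $F_{\phi}$ of trivial determinant defined by $\phi$ satisfies $h^0(C,F_{\phi}\otimes D)=4$, and likewise $\phi\in B^{\perp}$ if and only if $h^0(C,F_{\phi}\otimes D\otimes\eta)=4$; so $A^{\perp}\cap B^{\perp}\neq 0$ would produce a nonsplit rank-$2$ bundle with trivial determinant --- hence a point of the $\PP^7=\PP(H^1(\OO_C))$ parametrizing these --- on which the two codimension-$4$ loci $\{h^0(F_{\phi}\otimes D)=4\}$ and $\{h^0(F_{\phi}\otimes D\otimes\eta)=4\}$ both vanish, which is impossible if they meet properly; an entirely analogous reformulation (with rank-$2$ bundles of determinant $\eta$, living in a $\PP^6$) handles $A'\cap B'$. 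In either approach the technical heart, and the step I expect to be hardest, is exactly this properness/general-position input --- that the $\{h^0\geq 4\}$ locus among the relevant rank-$2$ bundles and its $\eta$-translate are transverse, equivalently that the degenerate curves one smooths are sufficiently generic --- since nothing forces it a priori and it is the same flavour of statement the authors flag as delicate elsewhere in this section.
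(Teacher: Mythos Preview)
Your reduction is entirely correct and far more explicit than the paper's own argument: the Riemann--Roch and Brill--Noether count giving $h^0(D)=h^0(D\otimes\eta)=2$, the base-point-free pencil trick identifying all four kernels with $H^0(K_C\otimes D^{\otimes(-2)})$ or its $\eta$-twist, and the reformulation of (i) and (ii) as the transversality statements $A\cap B=0$ and $\dim(A'\cap B')\leq 1$ are all fine. The paper, by contrast, gives no details at all beyond the single sentence ``This can be proved by a degeneration argument, for example by degenerating $C$ to the union of two curves of genus $4$ meeting at one point.''

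Where your proposal has a genuine gap is in the two routes you suggest for the remaining transversality. Your second route is circular: the locus $\{\phi:h^0(F_\phi\otimes D)=4\}\subseteq\PP(H^1(\OO_C))$ is \emph{exactly} the linear subspace $\PP(A^{\perp})$, since the cup-product map $\cup\,\phi:H^0(D)\to H^1(D)$ is linear in $\phi$ and its vanishing is by definition $A^{\perp}$; likewise for $B^{\perp}$. So asking that the two ``codimension-$4$ loci'' meet properly in $\PP^7$ is literally the statement $A^{\perp}\cap B^{\perp}=0$, i.e.\ $A+B=H^0(K_C)$, which is what you wanted to prove. The rank-$2$ reformulation buys no new leverage. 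Your first route (smoothing the explicit nodal Prym curves of \cite{chiodetal}) is in the right spirit but is not well-suited to this particular lemma: those degenerations are tailored to Koszul cohomology computations, not to tracking how a specific $D\in W^1_7$ and its $\eta$-translate specialize. The paper's hint---letting $C$ degenerate to $C_1\cup_p C_2$ with $g(C_i)=4$---is better adapted, since on each genus-$4$ component the relevant pencils become $g^1_3$'s and the multiplication maps reduce to classical statements about canonical curves of genus $4$; you should carry out the limit-linear-series analysis there rather than on the $8$-nodal rational curves.
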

\begin{proof} This can be proved by a degeneration argument, for example by degenerating $C$ to the union of two curves of genus $4$ meeting at one point.
\hfill $\Box$
\end{proof}

By Brill-Noether theory, the following corollary follows from (i) above:
\begin{coro} \label{lefinite} For $[C,\eta]$ as above,
the set of pencils $D\in W^1_7(C)$  satisfying (\ref{eqsecinv}) is finite.
\end{coro}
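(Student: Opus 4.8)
The plan is to realize the set in question as an intersection of two translates of the Brill--Noether locus $W^1_7(C)$ inside the Jacobian, and to use Lemma~\ref{leaajouter}(i) to show that this intersection has vanishing Zariski tangent space at every point, hence is finite. Since $\eta^{\otimes 2}\cong\OO_C$, a pencil $D\in W^1_7(C)$ satisfies (\ref{eqsecinv}) precisely when both $[D]$ and $[D\otimes\eta]$ lie in $W^1_7(C)$; equivalently, writing $t_\eta$ for translation by $\eta$ on $\mathrm{Pic}^7(C)$ and using $-\eta=\eta$, the set is
\[ \Sigma:=W^1_7(C)\cap t_\eta\bigl(W^1_7(C)\bigr)\subseteq\mathrm{Pic}^7(C). \]
This is a closed subscheme of a projective variety, so it suffices to prove that $\dim\Sigma=0$.

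Fix $[D]\in\Sigma$. By Lemma~\ref{leaajouter}(i) we have $h^0(C,D)=2$ and $h^0(C,D\otimes\eta)=2$, so neither $[D]$ nor $[D\otimes\eta]$ lies in $W^2_7(C)$, and the classical description of the Zariski tangent space to a Brill--Noether locus at a point off the next stratum applies to both. Identifying every tangent space of $\mathrm{Pic}^7(C)$ canonically with $H^1(C,\OO_C)$ — on which translations act as the identity — one obtains
\[ T_{[D]}W^1_7(C)=\bigl(\mathrm{Im}\,\mu_D\bigr)^{\perp},\qquad T_{[D]}\,t_\eta\bigl(W^1_7(C)\bigr)=\bigl(\mathrm{Im}\,\mu_{D\otimes\eta}\bigr)^{\perp}, \]
where $\mu_D\colon H^0(C,D)\otimes H^0(C,K_C\otimes D^{\vee})\to H^0(C,K_C)$ is the cup-product (Petri) map, $\mu_{D\otimes\eta}$ is the analogous map for $D\otimes\eta$ (note that $K_C\otimes(D\otimes\eta)^{\vee}=K_C\otimes D^{\vee}\otimes\eta$), and $\perp$ denotes the annihilator taken with respect to the Serre-duality pairing between $H^0(C,K_C)$ and $H^1(C,\OO_C)$. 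Intersecting the two tangent spaces gives $T_{[D]}\Sigma=\bigl(\mathrm{Im}\,\mu_D+\mathrm{Im}\,\mu_{D\otimes\eta}\bigr)^{\perp}$.

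Now $\mathrm{Im}\,\mu_D+\mathrm{Im}\,\mu_{D\otimes\eta}$ is exactly the image of the multiplication map appearing in Lemma~\ref{leaajouter}(i), which is surjective onto $H^0(C,K_C)$; hence $T_{[D]}\Sigma=0$ for every $[D]\in\Sigma$. A closed subscheme of a projective variety all of whose Zariski tangent spaces vanish is finite, since at each point the local ring has dimension at most its embedding dimension, which is $0$; therefore $\Sigma$ is finite, as claimed. The only point requiring care is the verification that $h^0$ does not jump at $D$ or at $D\otimes\eta$ — this is needed for the linear-algebra description of the two tangent spaces to be valid — and it is precisely the content of Lemma~\ref{leaajouter}(i); so the genuine input of the argument is that lemma, and everything else is bookkeeping.
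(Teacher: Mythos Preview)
Your argument is correct and is precisely the unpacking of the paper's one-line proof ``By Brill--Noether theory, the following corollary follows from (i) above'': the relevant Brill--Noether theory is exactly the Petri-map description of the tangent space to $W^1_7(C)$, and the surjectivity in Lemma~\ref{leaajouter}(i) forces the intersection of tangent spaces to vanish. The only cosmetic remark is that for a scheme-theoretic intersection one has $T_{[D]}\Sigma\subseteq T_{[D]}W^1_7(C)\cap T_{[D]}t_\eta(W^1_7(C))$ (in fact equality here), but even the inclusion is all you need.
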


Given such a $D$, we form the rank $2$ vector bundle
$$E=D\oplus (K_C\otimes D^{\vee})$$ on
$C$ which satisfies the conditions (\ref{eqsectionsE}). The associated quartic is however not
interesting for our purpose, due to the following fact:

\begin{lemm} \label{lequarticquadric} The quartic on $\PP(H^0(C,L)^{\vee})$
associated to the vector bundle $D\oplus (K_C\otimes D^{\vee})$ is the union of the two quadrics
$Q_0$ and $Q_1$ associated respectively with the multiplication maps
$$H^0(D)\otimes H^0((K_C\otimes D^{\vee})(\eta))\rightarrow H^0(K_C(\eta)) \mbox{ and  } H^0(D(\eta))\otimes H^0(K_C\otimes D^{\vee})\rightarrow H^0(K_C(\eta)).$$
Both these quadrics contain $C$.
\end{lemm}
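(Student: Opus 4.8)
The plan is to exploit the direct-sum structure of $E$. Write $F:=K_C\otimes D^{\vee}$, so that $E=D\oplus F$ and, crucially, $\wedge^2 E=\det E=K_C$. This induces direct-sum decompositions
$$V_0=H^0(C,E)=H^0(C,D)\oplus H^0(C,F),\qquad V_1=H^0(C,E(\eta))=H^0(C,D(\eta))\oplus H^0(C,F(\eta)).$$
First I would record that all four summands are $2$-dimensional: $h^0(C,D)=h^0(C,D(\eta))=2$ by Lemma \ref{leaajouter}(i), and $h^0(C,F)=h^0(C,F(\eta))=2$ by Riemann--Roch together with Serre duality (so that $h^1(C,K_C\otimes D^{\vee})=h^0(C,D)$, and likewise after twisting by $\eta$). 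Hence $\dim V_0=\dim V_1=4$, so $\PP(V_0^{\vee}\otimes V_1^{\vee})\cong\PP^{15}$ and $D_4$ is the zero locus of the $4\times 4$ determinant.

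The heart of the argument is to identify the composite map (\ref{eqcompomap18mai}) with the map $H^0(C,E)\otimes H^0(C,E(\eta))\to H^0(C,E\otimes E(\eta))\to H^0(C,\wedge^2 E\otimes\eta)=H^0(C,L)$ induced by the natural projection $E\otimes E(\eta)\twoheadrightarrow\wedge^2 E\otimes\eta$, up to a nonzero scalar. I would verify this fibrewise: both the isomorphism $E\otimes E(\eta)\cong\mathcal{E}nd\,E\otimes L$ used in (\ref{eqcompomap18mai}) and the trace are fibrewise maps, and over $p\in C$ the canonical isomorphism $E(\eta)_p\cong E_p^{\vee}\otimes L_p$ coming from $E_p\cong E_p^{\vee}\otimes\det E_p$ identifies the composite $E_p\otimes E(\eta)_p\cong\mathcal{E}nd(E_p)\otimes L_p\stackrel{\mathrm{Tr}}{\longrightarrow}L_p$ with the wedge pairing $E_p\otimes E_p\to\wedge^2 E_p=K_{C,p}$ (tensored with $\eta_p$). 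Granting this, decomposing along $E=D\oplus F$ shows that the map kills $H^0(C,D)\otimes H^0(C,D(\eta))$ and $H^0(C,F)\otimes H^0(C,F(\eta))$, since $\wedge^2$ annihilates the symmetric part; and on the two cross terms, using $D\otimes F(\eta)=F\otimes D(\eta)=L$, it is precisely the two multiplication maps $\mu_0:H^0(C,D)\otimes H^0(C,F(\eta))\to H^0(C,L)$ and $\mu_1:H^0(C,F)\otimes H^0(C,D(\eta))\to H^0(C,L)$ appearing in the statement.

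The rest is linear algebra. The map $H^0(C,L)^{\vee}\to V_0^{\vee}\otimes V_1^{\vee}$ is injective (its transpose $V_0\otimes V_1\to H^0(C,L)$ is surjective by Lemma \ref{leaajouter}(ii)), and by the previous paragraph its image lies in the subspace on which the two diagonal $2\times 2$ blocks vanish. For such a tensor, regarded as a $4\times 4$ matrix with off-diagonal blocks $Q$ and $R$ --- the $2\times 2$ matrices of linear forms on $\PP^6=\PP(H^0(C,L)^{\vee})$ representing $\mu_0^{\vee}$ and $\mu_1^{\vee}$ respectively --- the determinant factors as $\det Q\cdot\det R$ (neither factor vanishing identically, as multiplying sections of a line bundle by a fixed nonzero section is injective). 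Hence the restriction $D_{4,E}$ of $D_4$ to $\PP^6$ equals $\{\det Q=0\}\cup\{\det R=0\}$, and I would then identify $\{\det Q=0\}$ as the pullback under $\PP(\mu_0^{\vee})$ of the rank-$\le 1$ locus in $\PP\bigl(H^0(C,D)^{\vee}\otimes H^0(C,F(\eta))^{\vee}\bigr)$, a single quadric since the matrices involved are $2\times 2$ --- that is, the quadric $Q_0$ associated with $\mu_0$ --- and similarly $\{\det R=0\}=Q_1$. Finally, that $C\subseteq Q_0\cap Q_1$ is the factorization already used in the proof of Lemma \ref{lesingquartic}: evaluating $\mu_0$ at $p\in C$ factors through the one-dimensional space $D_p\otimes F(\eta)_p$, so the image of $\mathrm{ev}_p$ in $H^0(C,D)^{\vee}\otimes H^0(C,F(\eta))^{\vee}$ has rank at most $1$ and therefore lies on $Q_0$; symmetrically $p\in Q_1$. (This is consistent with Lemma \ref{lesingquartic}, since $\mathrm{Sing}(Q_0\cup Q_1)\supseteq Q_0\cap Q_1\supseteq C$.)

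The only genuinely delicate step is the identification of (\ref{eqcompomap18mai}) with the wedge-product map: one has to pin down the canonical isomorphisms carefully so as to see that the map is literally a nonzero multiple of the projection $E\otimes E(\eta)\twoheadrightarrow\wedge^2 E\otimes\eta$, since it is exactly this that forces the two block-diagonal pieces of $V_0\otimes V_1$ to die and produces the two quadratic factors. Everything else is routine.
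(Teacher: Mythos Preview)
Your argument is correct and follows essentially the same route as the paper: both exploit the direct-sum decomposition $V_i = H^0(D(\cdot))\oplus H^0(F(\cdot))$ to see that the map (\ref{eqcompomap18mai}) kills the two diagonal blocks and restricts to the multiplication maps $\mu_0,\mu_1$ on the cross terms, after which the quartic condition $\mathrm{rk}\le 3$ on a block-antidiagonal $4\times 4$ tensor is exactly $\det Q\cdot\det R=0$ (the paper phrases this equivalently as ``rank $\le 3$ iff one of $\mu_0^*f$, $\mu_1^*f$ has rank $\le 1$''). Your explicit identification of (\ref{eqcompomap18mai}) with the wedge pairing and your verification that $C\subseteq Q_0\cap Q_1$ simply spell out steps the paper leaves implicit.
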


\begin{proof} Indeed we have in this case
$$V_0=H^0(C,E)=H^0(C,D)\oplus H^0(C,K_C\otimes D^{\vee}), \ \mbox{respectively} $$
$$V_1=H^0(C,E(\eta))=H^0(C,D\otimes \eta)\oplus H^0(C,K_C\otimes D^{\vee}\otimes \eta).$$
Furthermore, it is clear that the map of (\ref{eqcompomap18mai}) factors through
the projection
$$V_0\otimes V_1\rightarrow \Bigl(H^0(C,D)\otimes H^0\bigl(C,K_C\otimes D^{\vee}\otimes \eta\bigl)\Bigr)\oplus \Bigl(H^0(C,K_C\otimes D^{\vee})\otimes H^0(C,D\otimes \eta)\Bigr)$$
and induces on each summand the multiplication map.
The quadric $Q_0$ is by definition associated with the
the multiplication map $$\mu_0: H^0(C,D)\otimes H^0(C,K_C\otimes D^{\vee}\otimes \eta)\rightarrow H^0(C,K_C\otimes \eta),$$ and
is the set of elements $f$ in $\PP(H^0(K_C\otimes \eta))^{\vee}$ such that $\mu_0^*(f)$ is a tensor of rank
$\leq 1$. Similarly for $Q_1$, with $D$ being replaced with $D(\eta)$.
Finally we use the fact that  a
tensor $$(\mu_0^*f,\mu_1^*f)\in \Bigl(H^0(C, D)\otimes H^0(C,K_C\otimes D^{\vee}\otimes \eta)\Bigr)\oplus \Bigl(H^0(C,K_C\otimes D^{\vee})\otimes H^0(C, D\otimes \eta)\Bigr)$$
has rank at most $3$ if and only if one of  $\mu_0^*f$ and $\mu_1^*f$ has rank at most $1$.
\hfill $\Box$
\end{proof}

We recall from \cite{mumford} or \cite{mukai} that the Brill-Noether condition
$h^0(C, E)\geq 4$ imposes only $10={5\choose 2}$ equations on the parameter space of rank $2$ vector bundles $E$ with determinant $K_C$.
As $\mbox{det } E(\eta)\cong K_C$ as well, we conclude that the equations
(\ref{eqsectionsE}) impose only $20$ conditions. As the moduli space $\mathcal{SU}_C(2,K_C)$ of semistable rank $2$ vector bundles on $C$ having determinant $K_C$ has dimension $3g-3=21$, in our case we conclude that
there is a positive dimensional family of such vector bundles on $C$ satisfying (\ref{eqsectionsE}).

\vskip 4pt

We now sketch the proof of the fact that
 for $C$  general of genus $8$ and $D\in W^1_7(C)$ satisfying (\ref{eqsecinv}), for a general deformation $E$ of  the vector bundle $D\oplus (K_C\otimes D^{\vee})$ satisfying
${\rm det}\,E\cong K_C$ and $h^0(C, E)=4$, the associated quartic $D_{4,E}$ singular along $C$ is not defined by
an element of ${\rm Sym}^2 I_C(2)$.
 Combined with Proposition \ref{proquarticsing}, this provides a  third  approach to Theorem \ref{theomainintro}. The proof of this fact rests on
an unproven general position  statement ($\ast$), so it is incomplete.

\begin{proof}[Sketch of proof of the nontriviality of the syzygy] The vector bundle $E$ is generated by sections, as it is a general section-preserving deformation
 of the vector bundle
$$D\oplus (K_C\otimes D^{\vee})$$  which is generated by global sections, and similarly for $E(\eta)$.
Along $C\subseteq \PP(H^0(C,L)^{\vee})$,  then the rational map
$$\PP\bigl(H^0(C,L)^{\vee}\bigr)\dashrightarrow \PP\bigl(H^0(E)^{\vee}\otimes H^0(E(\eta))^{\vee}\bigr)$$ is well-defined and the image
of $C$ is contained in the locus $T_{2,E}$ of tensors of rank exactly $2$.
 In fact, the case of $D\oplus (K_C\otimes D^{\vee})$ shows that this map is a morphism for general $E$
 (one just needs to know that $H^0(C,K_C\otimes \eta)$ is generated by the two vector spaces
 $H^0(D)\otimes H^0(K_C\otimes D^{\vee}\otimes \eta)$ and $H^0(D\otimes \eta)\otimes H^0(K_C\otimes D^{\vee})$ respectively, or rather their images under the multiplication map. Note that on $T_{2,E}$, there is a rank $2$ vector bundle $M$ which restricts
 to $E$ on $C$.

\vskip 4pt

In the case of the split vector bundle $E_{\mathrm{sp}}=D\oplus (K_C\otimes D^{\vee})$, Lemma \ref{lequarticquadric} shows that the Zariski closure
$\overline{T_{2,E_{\mathrm{sp}}}}$ parameterizing tensors of rank $\leq 2$ in
$\PP(H^0(C,L)^{\vee})\subseteq \PP(V_0^{\vee}\otimes V_1^{\vee})$  is equal to the singular locus
of $D_{4,E_{\mathrm{sp}}}$ and consists of the union of the two planes $P_0,\,P_1$ defined as  the singular loci of the quadrics
$Q_0,\,Q_1$ respectively, and  the intersection $Q_0\cap Q_1$. The
 locus $\overline{T_{2,E_{\mathrm{sp}}}}\setminus T_{2,E_{\mathrm{sp}}}$ is the locus where
 the tensor has rank $1$, and this happens exactly along the two conics
 $P_0\cap Q_1$ and $P_1\cap Q_0$. The curve $C$ is contained in $Q_0\cap Q_1$
and does not intersect $P_0\cup P_1$. In particular, the rational map
$\phi:\widetilde{\PP^6}\dashrightarrow\PP^6$ given by the linear system
$I_C(2)$ is well defined along $P_0\cup P_1$.
We believe that the  following general position statement  concerning  the two planes $P_i$ is true for general $C$  and $D,\,\eta$  as above.

\vspace{0,5cm}

($\ast$) {\it  The
 surfaces $ \phi(P_i)$ are projectively normal Veronese surfaces, generating
a hyperplane $\langle \phi(P_i)\rangle\subseteq \PP^6$. Furthermore,
the surface $\phi(P_0)\cup\phi(P_1)\subseteq \PP^6$ is contained in a unique quadric in
$\PP^6$, namely the union of the two hyperplanes $\langle \phi(P_0)\rangle $ and $\langle \phi(P_1)\rangle$.}

\vspace{0,5cm}

We now prove that, assuming ($\ast$), for a general vector bundle $E$ as above,  the associated quartic $D_{4,E}$ singular along $C$ is not defined by
an element of ${\rm Sym}^2 I_C(2)$.
As $P_0,\,P_1$ are $2$-dimensional reduced components of $\overline{T_{2,E_{\mathrm{sp}}}}$, hence of the right dimension, the theory
of determinantal hypersurfaces shows that for general
$E$ as above, there is a reduced surface $\Sigma_E\subseteq \overline{T_{2,E}}$ whose specialization
when $E=E_{\mathrm{sp}}$ contains $P_0\cup P_1$. Let $\mathcal{E}\rightarrow C\times B$ be a family of vector bundles on $C$ parameterized by a smooth curve
$B$, with general fiber $E$ and special fiber $E_{\mathrm{sp}}$. Denote by $\mathcal{E}_b$ the restriction of $\mathcal{E}$ to $C\times \{b\}$.
 Property ($\ast$) then implies that $\phi(\Sigma_{\mathcal{E}_b})$ for general $b\in B$ is contained in at most one quadric
 $Q_{\mathcal{E}_b}$ in $\PP^6$. We argue by contradiction and assume that the quartic  $D_{4,\mathcal{E}_b}$ is a pull-back $\phi^{-1}(Q)$ for general $b$. One thus must
have $Q=Q_{\mathcal{E}_b}$. Next, the determinantal quartic $D_{4,\mathcal{E}_b}$ is singular along $T_{2,\mathcal{E}_b}$, hence along $\Sigma_{\mathcal{E}_b}$.
Let $b\mapsto q_{\mathcal{E}_b}\in {\rm Sym}^2I_C(2)$, where $q_{\mathcal{E}_b}$ is a defining equation for
the quadric  $Q_{\mathcal{E}_b}$. Then we find that the first order derivative of the family
$\phi^*q_{\mathcal{E}_b}$ at $b_0$ also vanishes along $\Sigma_{\mathcal{E}_{b_0}}$, hence it must be proportional to $\phi^*q_{\mathcal{E}_{b_0}}$. We then conclude that the quadric  $Q_{\mathcal{E}_b}$
is in fact constant, and thus must be equal to the quadric $Q_{E_{\mathrm{sp}}}$.
We now reach a contradiction by proving the following lemma.
\hfill $\Box$
\end{proof}

\begin{lemm}\label{lefinfin} If the determinantal quartic $D_{4,\mathcal{E}_b}$ is constant, equal to
$D_{\mathrm{sp}}=Q_0\cup Q_1$, then the vector bundle $\mathcal{E}_b$ on $C$ does not deform with $b\in B$.
\end{lemm}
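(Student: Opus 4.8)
The plan is to show that if $D_{4,\mathcal{E}_b}$ is constantly equal to $D_{\mathrm{sp}}=Q_0\cup Q_1$, then $\mathcal{E}_b$ is independent of $b$, contradicting the choice of the family $\mathcal{E}$, whose general fibre is a nontrivial deformation of $E_{\mathrm{sp}}$. The mechanism I would use is that the fixed reducible quartic, together with the fixed curve $C\subseteq\PP^6$, already reconstructs $\mathcal{E}_b$. First I would pin down the relevant pieces: since $Q_0$ and $Q_1$ are the irreducible components of the fixed reduced quartic, they are independent of $b$; by the analysis preceding Lemma \ref{lequarticquadric} each $Q_i$ has rank $4$, its vertex $P_i=\mathrm{Sing}(Q_i)\cong\PP^2$ is disjoint from $C$, and $D_{\mathrm{sp}}=\{q_0q_1=0\}$ for two quadrics $q_0,q_1\in I_{C,L}(2)$. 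Dually, the $4$-dimensional subspaces $V^{(i)}\subseteq H^0(C,L)$ annihilating $P_i$ are fixed, and on $\PP((V^{(i)})^{\vee})\cong\PP^3$ the quadric $Q_i$ cuts out a smooth quadric threefold $\overline{Q}_i$.

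The key step, and the one I expect to be the main obstacle, is to transfer this decomposition to $\mathcal{E}_b$. Put $V_0^b:=H^0(C,\mathcal{E}_b)$ and $V_1^b:=H^0(C,\mathcal{E}_b(\eta))$, and view the natural injection $\iota_b\colon H^0(C,L)^{\vee}\hookrightarrow V_0^{b,\vee}\otimes V_1^{b,\vee}$ as a linear family of $4\times4$ matrices on $\PP^6$; then $D_{4,\mathcal{E}_b}=\{\det\iota_b(\cdot)=0\}$, and the hypothesis says $\det\iota_b(\cdot)=q_0q_1$ up to a scalar. I would show that a factorization of the determinant of such a linear matrix family into a product of two irreducible quadrics forces, after a constant change of bases of $V_0^b$ and $V_1^b$, a block-anti-diagonal normal form: direct sum decompositions $V_0^b=A_0^b\oplus B_0^b$ and $V_1^b=A_1^b\oplus B_1^b$ into $2$-dimensional subspaces with $\iota_b(H^0(C,L)^{\vee})\subseteq (A_0^b)^{\vee}\otimes(B_1^b)^{\vee}\oplus(B_0^b)^{\vee}\otimes(A_1^b)^{\vee}$, for which $Q_0$ (resp. $Q_1$) is the vanishing of the determinant of the $2\times2$ block $m_0$ on $A_0^b\otimes B_1^b$ (resp. $m_1$ on $B_0^b\otimes A_1^b$) and $V^{(0)}$ (resp. $V^{(1)}$) is the image of the corresponding multiplication map --- exactly the structure exhibited for $E_{\mathrm{sp}}=D\oplus(K_C\otimes D^{\vee})$ in Lemma \ref{lequarticquadric}. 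The substance of the claim is a rigidity statement: near the manifestly block-anti-diagonal matrix $\iota_{b_0}$ attached to $E_{\mathrm{sp}}$, any linear matrix family whose determinant splits into two quadrics has the same normal form. This is classical in flavour, but making it precise --- and checking the needed openness and genericity --- is where the real work lies.

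Granting the block structure, the conclusion comes quickly. For general $p\in C$ the inclusion $C\subseteq Q_0=\{\det m_0=0\}$ forces the image of $A_0^b$ in the fibre $\mathcal{E}_{b,p}$ to be a line, so $A_0^b$ generates a line subbundle $D_b\subseteq\mathcal{E}_b$ with $h^0(D_b)\geq2$ and $A_0^b\subseteq H^0(D_b)$; likewise $B_1^b$ generates a line subbundle of $\mathcal{E}_b(\eta)$, and hence, after twisting by $\eta$, a line subbundle $D_b'\subseteq\mathcal{E}_b$. Now $\overline{Q}_0\subseteq\PP((V^{(0)})^{\vee})$ is the Segre quadric $\PP((A_0^b)^{\vee})\times\PP((B_1^b)^{\vee})$, whose two rulings depend only on $Q_0$; composing the linear projection $\PP(H^0(C,L)^{\vee})\dashrightarrow\PP((V^{(0)})^{\vee})$ with $C$ and with the first ruling recovers the morphism $C\to\PP((A_0^b)^{\vee})\cong\PP^1$ defined by the pencil $A_0^b$ on $D_b$, so $D_b$ is the pull-back of $\OO_{\PP^1}(1)$ under a morphism determined by $(Q_0,C)$ alone, hence independent of $b$; the same applies to $D_b'$ (from the second ruling of $\overline{Q}_0$, or from $\overline{Q}_1$), and $D_b'\cong K_C\otimes D_b^{\vee}$ since $\det\mathcal{E}_b\cong K_C$ and the degrees add up to $14$. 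Finally $D_b\oplus(K_C\otimes D_b^{\vee})\hookrightarrow\mathcal{E}_b$ is an isomorphism on global sections (both sides have $h^0=4$), hence generically an isomorphism of bundles with the same determinant, hence an isomorphism; so $\mathcal{E}_b\cong D\oplus(K_C\otimes D^{\vee})$ with $D:=D_b\in W^1_7(C)$ satisfying condition (\ref{eqsecinv}). By Corollary \ref{lefinite} there are only finitely many such $D$, so since $B$ is connected $D$, and therefore $\mathcal{E}_b$, does not depend on $b$.
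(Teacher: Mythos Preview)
Your overall strategy is sound and reaches the same conclusion as the paper, but the route is genuinely different, and your version leaves the hardest step open while the paper's does not.

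Your ``key step'' --- that a $7$-dimensional linear family of $4\times 4$ matrices whose determinant factors as a product of two irreducible quadrics admits, after change of bases in $V_0^b$ and $V_1^b$, a block-anti-diagonal normal form --- is exactly the content of the lemma in disguise: such a normal form is equivalent to $\mathcal{E}_b$ splitting as $D_b\oplus(K_C\otimes D_b^{\vee})$. You acknowledge this as ``where the real work lies'' and do not prove it. A priori a reducible determinant does not force a block structure (there are linear spaces of matrices with reducible determinant but no invariant flag), so even the local statement near $\iota_{b_0}$ needs an actual argument. Granting the block structure, your endgame (recover $D_b$ from a ruling of the quadric, then invoke the finiteness of Corollary~\ref{lefinite}) is fine, modulo the slip that $\overline{Q}_i\subseteq\PP^3$ is a smooth quadric \emph{surface}, not threefold.

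The paper sidesteps the normal-form question entirely. It works directly on the degeneracy locus: the bundle map $\sigma_b\colon V_{0,b}\otimes\OO_{\PP^6}\to V_{1,b}^{\vee}\otimes\OO_{\PP^6}(1)$ has corank $1$ generically along each component $Q_i$, so $\mathrm{Ker}(\sigma_b)$ defines a line bundle $\mathcal{K}_{i,b}$ on $Q_i\setminus P_i$. The crucial observation is that $\mathrm{Pic}(Q_i\setminus P_i)$ has no continuous part, so $\mathcal{K}_{i,b}$ is constant in $b$. Restricting to $C\subseteq Q_0\cap Q_1$, both $\mathcal{K}_{0,0|C}$ and $\mathcal{K}_{1,0|C}$ sit inside $\mathrm{Ker}(\sigma_{b|C})$; at $b=0$ they span it, hence they span it for all $b$. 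Since $\mathrm{Ker}(\sigma_{b|C})\cong\mathcal{E}_b^{\vee}$ (using global generation), one gets $\mathcal{E}_b$ constant directly, without ever proving a splitting of $V_0^b$ and $V_1^b$ and without appealing to Corollary~\ref{lefinite}. This is both shorter and complete; your approach, if the normal-form rigidity were established, would give the splitting of $\mathcal{E}_b$ explicitly, but as written it is a sketch with the central difficulty unaddressed.
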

\begin{proof}
Denoting $V_{0,b}:=H^0(C,\mathcal{E}_b)$, $V_{1,b}:=H^0(C,\mathcal{E}_b(\eta))$,
we have the multiplication map
$$V_{0,b}\otimes V_{1,b}\rightarrow H^0(C,K_C\otimes \eta)$$
which is surjective for generic $b$ since it is surjective for $\mathcal{E}_0=D\oplus (K_C\otimes D^{\vee})$
(see Lemma \ref{leaajouter}). The determinantal quartic $D_{4,\mathcal{E}_b}$ is the vanishing locus of the determinant of the
corresponding bundle map
\begin{eqnarray}\label{eqbundlemap}\sigma_b: V_{0,b}\otimes \mathcal{O}_{\PP(H^0(C,K_C(\eta))^{\vee})}\rightarrow V_{1,b}^{\vee}\otimes \mathcal{O}_{\PP(H^0(C,K_C(\eta))^{\vee})}(1)
\end{eqnarray}
on $ \PP\bigl(H^0(C,K_C\otimes \eta)^{\vee}\bigr)$.
We know that $D_{4,\mathcal{E}_b}=Q_0\cup Q_1$ for any $b\in B$, where the quadrics $Q_i$ are singular (of rank $4$), but with singular locus
$P_i$ not intersecting $C\subseteq Q_0\cap Q_1$.
The morphism $\sigma_b$ has rank exactly $1$ generically along each $Q_i$ and the kernel of $\sigma_{\mid D_{4,b}}$
determines a line bundle $\mathcal{K}_{i,b}$ on its smooth locus $Q_i\setminus P_i$. This line bundle is independent of $b$ since ${\rm Pic}(Q_i\setminus P_i)$ has no continuous part.
The restriction of $\mathcal{K}_{i,b}$ to $C$ is thus constant. Finally, on the smooth part of $(Q_0\cap Q_1)_{\mathrm{reg}}$, the kernel ${\rm Ker}(\sigma)$ contains the two line bundles
$\mathcal{K}_{i,b\mid Q_0\cap Q_1}$. Restricting to $C\subseteq (Q_0\cap Q_1)_{\mathrm{reg}}$, we conclude that
${\rm Ker}\,\sigma_{b\mid C}$ contains $\mathcal{K}_{i,0\mid C}$ for $i=0,1$. For $b=0$, one has $${\rm Ker}\,\sigma_{0\mid C}=\mathcal{K}_{0,0\mid C}\oplus \mathcal{K}_{1,0\mid C}$$
and this thus remains true for general $b$. Finally, it follows from the construction and the fact that
$\mathcal{E}_b$ is generated by its sections that ${\rm Ker}\,\sigma_{b\mid C}=\mathcal{E}_b^{\vee}$, which finishes the proof.
\hfill $\Box$
\end{proof}

\section{Miscellany}
\subsection{Extra remarks on  the geometry of paracanonical curves of genus $8$ with a nontrivial syzygy}

We now comment on an interesting rank $2$ vector bundle appearing in our situation.
Again, let $\phi_L:C\hookrightarrow \PP^6$ be a paracanonical curve
of genus $8$. We assume $L$ is scheme-theoretically cut out by quadrics. Denoting by
$N_C$  the normal bundle
of $C$ in the embedding in $\PP^6$, we consider the natural map
$I_C(2)\otimes\mathcal{O}_C
\rightarrow N_C^{\vee}\otimes L^{\otimes 2}$ (which is surjective by our assumption)
given by differentiation along $\phi_L(C)$, and let $F$ denote its kernel. We thus have
the short exact sequence:
\begin{eqnarray}\label{eqev} 0\longrightarrow F\longrightarrow I_C(2)\otimes \mathcal{O}_C
\longrightarrow N_C^{\vee}\otimes L^{\otimes 2}\longrightarrow 0.
\end{eqnarray}
If $K_{1,2}(C,L)\not=0$, the map $\mu:I_C(2)\otimes H^0( \PP^6,\mathcal{O}(1))
\rightarrow I_C(3)$ is not surjective, hence not injective.
A fortiori, the map
$$\overline{\mu}:I_C(2)\otimes H^0( \PP^6,\mathcal{O}_{\PP^6}(1))
\rightarrow H^0(C,N_C^{\vee}\otimes L^{\otimes 3})$$
induced by (\ref{eqev}) is not injective, so that
$h^0(C,F(L))\not=0$. In fact, the equivalence between the statements
$h^0(C,F(L))\not=0$ and $K_{1,2}(C,L)\not=0$ follows from the same argument
once we know that there is no cubic polynomial on $\PP^6$ vanishing with multiplicity
$2$ along $C$.

\vskip 4pt

We observe now that $F$ is a vector bundle of rank $2$ on the curve $C$, with determinant
equal to ${\rm det}\,N_C\otimes L^{\otimes (-2)}\cong K_C\otimes L^{\otimes (-3)}$.
Hence if $F(L)$ has a nonzero section,
assuming this section vanishes nowhere along $C$,
then $F(L)$ is an extension of $K_C\otimes L^{\vee}$ by $\mathcal{O}_C$. This provides an extension class
\begin{eqnarray}\label{eqextension} e\in H^1(C,L\otimes K_C^{\vee})=H^0(C, K_C^{\otimes 2}\otimes L^{\vee})^{\vee}.
\end{eqnarray}
Assume now $L\otimes K_C^{\vee}=:\eta$ is a nonzero $2$-torsion element of ${\rm Pic}^0(C)$.
Then $$e \in H^0(C,L)^{\vee}.$$
On the other hand, according to Theorem \ref{theomain}, there exists
a nontrivial syzygy
$$\gamma=\sum_{i=1}^6 \ell_i\otimes q_i\in K_{1,2}(C,L)={\rm Ker}\bigl\{H^0(\PP^6, \mathcal{O}_{\PP^6}(1))\otimes I_C(2) \rightarrow I_C(3)\bigr\},$$
which is degenerate by Proposition \ref{prodeg}. As we saw already, it has in fact rank $6$
for generic $[C,\eta]$, hence determines a nonzero element
\begin{eqnarray}\label{eqpourf}f\in H^0(\PP^6, \mathcal{O}_{\PP^6}(1))^{\vee}=H^0(C,L)^{\vee}=H^1(C,K_C\otimes L^{\vee})=H^1(C,L\otimes K_C^{\vee}),
\end{eqnarray}
which is well-defined up to a coefficient.

\begin{prop} \label{proeandf} The two elements $e$ and $f$ are  proportional.
\end{prop}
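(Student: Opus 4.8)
Here is the approach I would take. Throughout I use, as the excerpt does for generic $[C,\eta]$, that there is no cubic on $\PP^6$ vanishing doubly along $C$ (so that $H^0(C,F(L))=K_{1,2}(C,L)$, and the chosen generator equals $s_\gamma=\gamma=\sum_{i=1}^6\ell_i\otimes q_i$) and that $s_\gamma$ is nowhere vanishing, so that $(\ref{eqev})$ and the extension $0\to\OO_C\stackrel{s_\gamma}{\longrightarrow}F(L)\to\eta\to0$ are sequences of vector bundles. Both $e$ and $f$ are viewed as elements of $H^1(C,\eta)\cong H^0(C,L)^{\vee}$, and since $\gamma$ has rank $6$ the forms $\ell_1,\dots,\ell_6$ are independent, span the hyperplane of linear forms vanishing at the point $p$, and this hyperplane is exactly $\ker f\subseteq H^0(C,L)$. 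So it is enough to prove $\ker f\subseteq\ker e$: once this is known, $e$ annihilates a hyperplane and therefore lies in the line $(\ker f)^{\perp}=\mathbb{C}f$.

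To identify $\ker e$, I would apply $(-)^{\vee}\otimes L$ to the extension $0\to\OO_C\stackrel{s_\gamma}{\longrightarrow}F(L)\to\eta\to0$. Using $\eta^{\vee}\otimes L\cong K_C$ and $F(L)^{\vee}\otimes L\cong F^{\vee}$ this produces an exact sequence of bundles on $C$,
$$0\longrightarrow K_C\longrightarrow F^{\vee}\longrightarrow L\longrightarrow0,$$
whose extension class is, up to sign and the canonical identifications, $e\in H^1(C,\eta)=H^1(C,K_C\otimes L^{\vee})$. Its connecting homomorphism $\delta\colon H^0(C,L)\to H^1(C,K_C)=\mathbb{C}$ is cup product with that class, hence by Serre duality the functional $\pm\langle e,-\rangle$. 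Therefore $\ker e=\ker\delta=\mathrm{Im}\bigl(H^0(C,F^{\vee})\to H^0(C,L)\bigr)$, i.e. the space of sections of $L$ that lift to $F^{\vee}$.

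It then remains to lift each $\ell_i$ to $H^0(C,F^{\vee})$. Since $C$ is scheme-theoretically cut out by quadrics, dualizing the sub-bundle inclusion $F\subseteq I_C(2)\otimes\OO_C$ gives a surjection $I_C(2)^{\vee}\otimes\OO_C\twoheadrightarrow F^{\vee}$, hence a map $I_C(2)^{\vee}=H^0(C,I_C(2)^{\vee}\otimes\OO_C)\to H^0(C,F^{\vee})$. Unwinding the twists, the composite $I_C(2)^{\vee}\to H^0(C,F^{\vee})\to H^0(C,L)$ is the contraction against $\gamma$: the map $(I_C(2)\otimes L)^{\vee}\twoheadrightarrow F(L)^{\vee}$ followed by $s_\gamma^{\vee}=\langle-,s_\gamma\rangle$ is evaluation at $s_\gamma=\gamma\in I_C(2)\otimes L$, which as a bundle map $I_C(2)^{\vee}\otimes\OO_C\to L$ is $\gamma$ itself, so on global sections $\phi\mapsto(\mathrm{id}\otimes\phi)(\gamma)=\sum_{i=1}^6\phi(q_i)\ell_i$. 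As the $q_i$ are linearly independent, the image of this composite is all of $\langle\ell_1,\dots,\ell_6\rangle=\ker f$. Hence every $\ell_i$ lifts to $H^0(C,F^{\vee})$, i.e. $\langle e,\ell_i\rangle=0$, which gives $\ker f\subseteq\ker e$ and therefore the proportionality of $e$ and $f$.

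The one genuinely delicate point is the twist-bookkeeping in the last paragraph: identifying the composite $I_C(2)^{\vee}\to H^0(C,F^{\vee})\to H^0(C,L)$ with contraction against $\gamma$, which hinges on matching the dual of $F\hookrightarrow I_C(2)\otimes\OO_C$ with $I_C(2)^{\vee}\otimes\OO_C\twoheadrightarrow F^{\vee}$ and the section $s_\gamma$ with $s_\gamma^{\vee}$, and on keeping track of the twist by $L$. Everything else is the standard dictionary relating connecting homomorphisms, extension classes, cup products, and Serre duality; the argument also makes essential use of the genericity hypotheses on $[C,\eta]$ recorded above (no doubly-vanishing cubic, $s_\gamma$ base-point free, $\gamma$ of rank $6$), all of which are part of the situation described in Lemma \ref{DCE}.
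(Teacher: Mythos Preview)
Your proof is correct and follows essentially the same route as the paper's. The paper likewise identifies $\ker f=\mathrm{Im}\bigl(\gamma:I_C(2)^{\vee}\to H^0(C,L)\bigr)$ and $\ker e=\mathrm{Im}\bigl(H^0(C,F^{\vee})\to H^0(C,L)\bigr)$ (the paper reaches the sequence $0\to K_C\to F^{\vee}\to L\to 0$ by twisting the extension by $K_C$ and then invoking $F\otimes L^{\otimes 3}\otimes K_C^{\vee}\cong F^{\vee}$, whereas you dualize and twist by $L$, but these are the same sequence), and then proves the key claim that the composite $I_C(2)^{\vee}\to H^0(C,F^{\vee})\to H^0(C,L)$ equals $\gamma$; your contraction computation is exactly this claim, spelled out with a bit more detail on the twist-bookkeeping.
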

\begin{proof}  Equivalently, we show that the kernels
of the two linear forms  $e,\,f\in H^0(C,L)^{\vee}$ are equal.
Viewing $\gamma$ as an element of ${\rm Hom}\,(I_C(2)^{\vee},H^0(C,L))$, we have
${\rm Ker}(f)={\rm Im}(\gamma)$. On the other hand,
the kernel of $e$ identifies with
$${\rm Im}\,\Bigl\{j:H^0(C,F\otimes L^{\otimes 3}\otimes K_C^{\vee})\rightarrow H^0(C,L)\Bigr\},$$ where the map $j$ is obtained by twisting the
exact sequence
$0\rightarrow\mathcal{O}_C\rightarrow F(L)\rightarrow K_C\otimes L^{\vee}\rightarrow 0$ by
$K_C$.
We have $F\otimes L^{\otimes 3}\otimes K_C^{\vee}\cong F^{\vee}$ since ${\rm det}\,F\cong K_C\otimes L^{\otimes (-3)}$, hence there is a natural morphism
$$i^*: I_C(2)^{\vee}\otimes \mathcal{O}_C\rightarrow F^{\vee}\cong F(L^{\otimes 3}\otimes K_C^{\vee})$$
dual to the inclusion $F\hookrightarrow I_C(2)\otimes \mathcal{O}_C$ of (\ref{eqev}).
The proposition follows from the following claim:

\vskip 3pt

\noindent {\bf Claim.} {\it The morphism $\alpha:I_C(2)^{\vee}\rightarrow H^0(C,L)$ is equal to
$j\circ i^*$.}

\vskip 3pt

Forgetting about the last identification $F^{\vee}\cong F\otimes L^{\otimes 3}\otimes K_C^{\vee})$, the claim amounts to the following
 general fact:
For an evaluation exact sequence on a variety $X$
$$0\longrightarrow G\longrightarrow W\otimes \mathcal{O}_X\longrightarrow M\longrightarrow 0$$
and for a section
$s\in H^0(X,G(L))=H^0(X,\mathcal{H}om(G^{\vee},L))$
giving an element $$s'\in {\rm Ker}\Bigl\{W\otimes H^0(X,L)\rightarrow H^0(X,M\otimes L)\Bigr\}\subseteq {\rm Hom}\,(W^{\vee},H^0(X,L)),$$
the induced map $s:H^0(X,G^{\vee})\rightarrow H^0(X,L)$ composed with the map
$W^{\vee}\rightarrow H^0(X,G^{\vee})$ equals the map $s':W^{\vee}\rightarrow H^0(X,L)$.
\hfill $\Box$
\end{proof}

\subsection{Further properties}

Using the exact sequence (\ref{eqev}) in the general case of a genus 8 paracanonical curve $[C,L]\in P^{14}_8$, we obtain:
\begin{lemm}
 A section $s\in H^0(C,F(L))\subseteq I_{C,L}(2)\otimes H^0(C,L)=\mathrm{Hom}\bigl(I_{C,L}(2)^{\vee},H^0(C,L)\bigr)$  of rank 6, determines an element $e\in |2L-K_C|$.
\end{lemm}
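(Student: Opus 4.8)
The plan is to recognize that a rank $6$ section of $H^0(C,F(L))$ is exactly a rank $6$ linear syzygy of $\phi_L(C)$ among quadrics, and then to read off $e$ from the residual curve furnished by the structure theory of Section~\hyperref[sec1]{2}. Twisting the evaluation sequence (\ref{eqev}) by $L$ gives $0\to F(L)\to I_{C,L}(2)\otimes L\to N_C^{\vee}\otimes L^{\otimes 3}\to 0$, so that $H^0(C,F(L))$ is the kernel of the differentiation map $\overline{\mu}\colon I_{C,L}(2)\otimes H^0(C,L)\to H^0(C,N_C^{\vee}\otimes L^{\otimes 3})$. For $[C,L]$ general, no cubic on $\PP^6$ vanishes doubly along $\phi_L(C)$, hence $\ker\overline{\mu}=\ker\mu=K_{1,2}(C,L)$ and $s$ is a genuine syzygy $\gamma=\sum_{i=1}^6\ell_i\otimes q_i$; by definition, $s$ having rank $6$ as a homomorphism $I_{C,L}(2)^{\vee}\to H^0(C,L)$ means $\dim\langle\ell_1,\dots,\ell_6\rangle=6$, i.e.\ that $\gamma$ is a rank $6$ syzygy. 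Write $p\in\PP^6$ for the common zero of the $\ell_i$; for $\gamma$ general, $p\notin\phi_L(C)$.

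Next I would apply the structure theory recalled in Section~\hyperref[sec1]{2}. By Schreyer's Lemma~6.3 there is a skew-symmetric matrix of linear forms $A=(a_{ij})$ with $q_i=\sum_{j}\ell_j a_{ij}$, and $\mathrm{Syz}(\gamma)=\{p\}\cup D$ with $D=V(q_1,\dots,q_6,\mathrm{Pfaff}(A))\subseteq\PP^6$; by Theorem~4.4 of \cite{ELMS} the line bundle $\mathcal{O}_D(1)$ is a theta characteristic, i.e.\ $\omega_D\cong\mathcal{O}_D(2)$. Since $C\subseteq\mathrm{Syz}(\gamma)$ and $p\notin C$, the smooth curve $C$ is a reduced component of $D$; the residual curve $E$, given by $I_E=(I_D:I_C)$, is geometrically linked to $C$ by $D$, and by Lemma~\ref{DCE} it is an elliptic septic meeting $C$ transversally along a reduced divisor $e:=C\cap E$ of degree $14$.

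The conclusion then follows from liaison. Because $\omega_D$ is invertible, $D$ is Gorenstein, and since $C$ and $E$ are geometrically linked by $D$ the dualizing sheaf restricts as $\omega_D|_C\cong K_C\otimes\mathcal{O}_C(e)$; in the nodal situation at hand this is simply the Mayer--Vietoris sequence for $\omega_D$, exactly as exploited in Lemma~\ref{DCE} and in Section~\hyperref[secthirdproof]{3}. Combining this with $\omega_D\cong\mathcal{O}_D(2)$ and $\mathcal{O}_D(1)|_C\cong L$ gives $\mathcal{O}_C(e)\cong\omega_D|_C\otimes K_C^{\vee}\cong L^{\otimes 2}\otimes K_C^{\vee}$, so that $e\in|L^{\otimes 2}\otimes K_C^{\vee}|=|2L-K_C|$, as claimed.

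The step I expect to be the main obstacle is transporting the structure theory --- Schreyer's normal form and, above all, the theta-characteristic property $\omega_D\cong\mathcal{O}_D(2)$ of Theorem~4.4 of \cite{ELMS} --- from a generic rank $6$ syzygy to the constrained one here, whose syzygy scheme is forced to contain the fixed curve $C$; this is essentially the content of Lemma~\ref{DCE}. Outside the generic locus (or when $p\in C$, or $E$ and $e$ degenerate) one would need to check with more care that $I_E=(I_D:I_C)$ still links $C$ to a residual curve, so that the liaison formula indeed produces a divisor class in $|2L-K_C|$.
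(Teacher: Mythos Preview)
Your argument is correct under the genericity hypotheses you flag, but it takes a genuinely different route from the paper's own proof.

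The paper argues directly and sheaf-theoretically, without invoking the syzygy scheme at all. From $s\in H^0(C,F(L))$ one gets a sheaf map $F^{\vee}\to L$ (pairing with $s$) and, via the dual of (\ref{eqev}), a map $g_s\colon I_{C,L}(2)^{\vee}\otimes\OO_C\to L$ factoring through $F^{\vee}\to L$. Since $\det F^{\vee}\cong 3L-K_C$, the kernel of $F^{\vee}\to L$ is $2L-K_C$, and one obtains a commutative diagram
\[
\begin{array}{ccccccccc}
0&\to&\mathcal{K}er(g_s)&\to&I_C(2)^{\vee}\otimes\OO_C&\to&L&\to&0\\
 & &\downarrow& &\downarrow& &\parallel& & \\
0&\to&2L-K_C&\to&F^{\vee}&\to&L&\to&0.
\end{array}
\]
On global sections, $H^0(\mathcal{K}er(g_s))=\ker\bigl(I_{C,L}(2)^{\vee}\to H^0(C,L)\bigr)$, which is one-dimensional precisely because $s$ has rank $6$; its image in $H^0(C,2L-K_C)$ is the desired element of $|2L-K_C|$.

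What each approach buys: your argument is more geometric and explains \emph{why} the divisor in $|2L-K_C|$ arises, namely as the intersection $e=C\cdot E$ with the residual elliptic septic from Lemma~\ref{DCE}. However, it leans on the full structure theory of rank~$6$ syzygies (Schreyer's normal form, the theta-characteristic result of \cite{ELMS}, and the transversality in Lemma~\ref{DCE}), and hence needs the genericity you acknowledge (no doubly-vanishing cubic, $p\notin C$, $D$ nodal). The paper's diagram chase is intrinsic to $s$ and $F$, avoids the syzygy scheme entirely, and yields the section of $2L-K_C$ with essentially no hypotheses beyond $\mathrm{rk}(s)=6$. In effect you are re-deriving the conclusion of Lemma~\ref{DCE}, whereas the paper extracts $e$ by pure linear algebra on the bundle $F$.
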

\begin{proof}
The multiplication by $s\in H^0(F(L))\subseteq I_{C,L}(2)\otimes H^0(C,L)=H^0(I_{C,L}(2)^{\vee}\otimes L)$ determines the natural maps $ F^{\vee}\to L$  and $g_{s}:I_C(2)^{\vee}\otimes \OO_C\to  L$  sitting in the following diagram:
$$
\begin{array}{ccccccccc}
0&\longrightarrow &\mathcal{K}er (g_s) &\longrightarrow& I_C(2)^{\vee}\otimes \OO_C&\longrightarrow &L&\longrightarrow &0\\
&&\downarrow&&\downarrow&&\downarrow&&\\
0&\longrightarrow  &2L-K_C&\longrightarrow&F^{\vee}&\longrightarrow &L &\longrightarrow &0\\
\end{array},
$$
where $I_C(2)^{\vee}\otimes \OO_C\rightarrow F^{\vee}$ is the dual of the natural inclusion of (\ref {eqev}). Passing to global sections we get the inclusion  $H^0(\mathcal{K}er(g_s))=\mbox{Ker}\bigl\{I_{C,L}(2)^{\vee}\to H^0(C, L)\bigr\}\hookrightarrow H^0(2L-K_C)$, which by hypothesis in $1$-dimensional hence it defines an element $e\in |2L-K_C|$.
\hfill $\Box$
\end{proof}

Via the exact sequence (\ref{eqev}) we can also show directly the following result that has been used
in Section \hyperref[secthirdproof]{3}:

\begin{lemm}\label{ID} If there is a spin curve $D=C\cup E\hookrightarrow \PP^6$ of genus 22 and degree 21 containing the genus 8 paracanonical curve $[C,L]$ as in Lemma \ref{DCE} , then $H^0C, (F(L))\neq 0$. If there is no cubic polynomial on $\PP^6$ vanishing with multiplicity
$2$ along $C$, then $K_{1,2}(C,L)\neq 0$.
\end{lemm}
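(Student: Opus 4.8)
The plan is to connect the geometry of the reducible spin curve $D=C\cup E$ to the vector bundle $F$ via the $6$-dimensional subspace $I_D(2)\subseteq I_C(2)$. Recall from Section \hyperref[subsecelliptic]{2.A} that a reducible spin curve $D=C\cup_e E\hookrightarrow\PP^6$ of genus $22$ with $\omega_D\cong\OO_D(2)$ arises as (part of) the syzygy scheme of a rank $6$ linear syzygy among the quadrics in $I_D(2)$. So the first step is to exhibit directly, from the data of $D$, a nonzero element of $H^0(C,F(L))$: I would take the space $W:=I_D(2)\subseteq I_C(2)$, which has dimension $6$, and observe that by the structure theorem (Lemma 6.3 of \cite{Sc}, as recalled) the rank $6$ syzygy $\gamma$ associated to $D$ lies in $H^0(\PP^6,\OO_{\PP^6}(1))\otimes W$ and has the special property that the six linear forms $\ell_1,\dots,\ell_6$ defining the point $p$ give, together with the skew-symmetric matrix $A=(a_{ij})$, the relations $q_i=\sum_j\ell_j a_{ij}$. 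The key point is that since $C\subseteq\mathrm{Syz}(\gamma)$ and $C$ does not pass through $p$, the syzygy $\gamma$ restricted (differentiated) along $C$ lands inside $H^0(C,F(L))$ rather than merely $H^0(C,N_C^\vee\otimes L^{\otimes 3})$; this is exactly the content of the exact sequence (\ref{eqev}) together with the fact that $\gamma$ is a genuine syzygy on $D$, hence on $C$.

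The cleanest route is as follows. First I would globalize the tautological syzygy of $D$: the quadrics in $W=I_D(2)$ vanish on all of $D$, in particular on $C$, so $W\subseteq I_C(2)$, and the rank $6$ linear syzygy $\gamma\in H^0(\PP^6,\OO_{\PP^6}(1))\otimes W\subseteq H^0(\PP^6,\OO_{\PP^6}(1))\otimes I_C(2)$ is a nonzero element of the kernel of the multiplication map into $H^0(\PP^6,\OO_{\PP^6}(3))\to H^0(C,L^{\otimes 3})$, i.e.\ of $\mu_{C,L}$ composed with restriction. That $\gamma$ actually maps to zero in $H^0(C,L^{\otimes 3})$ is because it is a syzygy for quadrics vanishing on $D\supseteq C$. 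Second, I would feed this through the exact sequence (\ref{eqev}): the composite $I_C(2)\otimes\OO_C\to N_C^\vee\otimes L^{\otimes 2}$ differentiates quadrics along $C$, and the associated map $\overline\mu:I_C(2)\otimes H^0(\OO_{\PP^6}(1))\to H^0(C,N_C^\vee\otimes L^{\otimes 3})$ kills $\gamma$ — this requires checking that $\gamma$, being a linear syzygy, differentiates to something which is itself divisible by the linear forms, i.e.\ the image of $\gamma$ under $\overline\mu$ is tautologically zero (this is the standard fact that a Koszul syzygy of quadrics contributes to $H^0$ of the kernel bundle). Hence the induced element $s_\gamma\in H^0(C,F(L))$ is nonzero, because $\gamma\ne 0$ and, away from the finitely many base points (of which there are none here, as $C$ does not meet the singular planes $P_i$ or the point $p$), the assignment $\gamma\mapsto s_\gamma$ is injective. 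This proves $H^0(C,F(L))\ne 0$.

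For the second assertion: granting $H^0(C,F(L))\ne 0$, I would run the argument already sketched in Section \hyperref[secsecondproof]{5.1} in reverse. The point is that the non-injectivity of $\overline\mu$ only gives $h^0(C,F(L))\ne 0$, and to upgrade this to $K_{1,2}(C,L)\ne 0$ one must lift a nonzero element of $H^0(C,F(L))$ to a genuine syzygy in $\mathrm{Ker}(\mu_{C,L})\subseteq I_C(2)\otimes H^0(\OO_{\PP^6}(1))$. Chasing the diagram, the obstruction to this lift is precisely $H^0$ of the cokernel of $I_C(3)\to H^0(C,L^{\otimes 3})$ together with a comparison term, and it vanishes exactly when there is no cubic polynomial on $\PP^6$ vanishing with multiplicity $2$ along $C$ — that is the hypothesis. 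Concretely: from the exact sequence (\ref{eqev}) twisted by $L$ one gets $H^0(C,F(L))\subseteq\mathrm{Ker}\{I_C(2)\otimes H^0(C,L)\to H^0(C,N_C^\vee\otimes L^{\otimes 3})\}$, and the difference between this kernel and $\mathrm{Ker}(\mu_{C,L})=\mathrm{Ker}\{I_C(2)\otimes H^0(C,L)\to I_C(3)\}$ is controlled by $\ker\{I_C(3)\to H^0(C,N_C^\vee\otimes L^{\otimes 3})\}=I_C(3)_2$, the cubics singular along $C$, which is zero by assumption. Therefore the two kernels coincide, and $H^0(C,F(L))\ne 0$ forces $K_{1,2}(C,L)\ne 0$.

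The main obstacle I anticipate is the first step done carefully: verifying that the tautological rank $6$ syzygy $\gamma$ attached to the reducible spin curve $D$ genuinely restricts to a nonzero element of $H^0(C,F(L))$, rather than becoming trivial or base-point-dependent when passed to $C$. This requires knowing that $C$ avoids the locus $\{p\}\cup P_0\cup P_1$ where the rational map defined by $I_C(2)$ (equivalently $I_D(2)$) is not a morphism — which is part of the generic picture in Lemma \ref{DCE} and its reversibility — and that the differentiation map $I_D(2)\otimes\OO_C\to N_C^\vee\otimes L^{\otimes 2}$ is surjective, i.e.\ that the $6$-dimensional space $I_D(2)$ already generates the conormal bundle (this is where the hypothesis of Lemma \ref{DCE} that $D$ has the expected behaviour, and that $C$ is scheme-theoretically cut out by quadrics, is used). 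Everything else is diagram-chasing with the exact sequence (\ref{eqev}) and the short exact sequence $0\to\OO_C\to F(L)\to K_C\otimes L^\vee\to 0$, combined with the vanishing of $I_C(3)_2$ supplied by the no-singular-cubic hypothesis.
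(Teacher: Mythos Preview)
Your argument for the second assertion (that $H^0(C,F(L))\neq 0$ plus the no-singular-cubic hypothesis implies $K_{1,2}(C,L)\neq 0$) is correct and matches the paper's reasoning, which is the observation recorded just before the lemma that the two kernels $\mathrm{Ker}(\mu_{C,L})$ and $\mathrm{Ker}(\overline\mu)$ differ exactly by $I_C(3)_2$.

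The first assertion, however, is where your proposal has a genuine gap: it is circular. You invoke ``the rank $6$ syzygy $\gamma$ associated to $D$'' as input, citing Section~2.A and Schreyer's Lemma~6.3. But Schreyer's lemma and the ELMS description go in the \emph{opposite} direction: starting from a rank $6$ syzygy $\gamma$, they produce the skew-symmetric matrix $A$ and identify the syzygy scheme as a spin curve. They do not, by themselves, produce a syzygy from the datum of a half-canonical curve $D\subseteq\PP^6$. Indeed, the sentence in Section~2.A that you paraphrase (``a reducible spin curve $D$ \ldots\ arises as part of the syzygy scheme of a rank $6$ linear syzygy'') explicitly points to Lemma~\ref{ID} for its proof. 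So you are assuming exactly what the lemma is meant to establish.

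The paper's proof avoids this by never mentioning $\gamma$. It builds a $3\times 3$ commutative diagram of short exact sequences on $C$ whose middle row is the defining sequence (\ref{eqev}) for $F$, whose middle column is $0\to I_D(2)\otimes\OO_C\to I_C(2)\otimes\OO_C\to\OO_C\to 0$ (coming from $\dim I_D(2)=6$), and whose left column is then forced to be
\[
0\longrightarrow L^{\vee}\longrightarrow F\longrightarrow \OO_C(-e)\longrightarrow 0,
\]
using $\OO_C(e)\cong L^{\otimes 2}\otimes K_C^{\vee}$ and $\det F\cong K_C\otimes L^{\otimes(-3)}$. Twisting by $L$ gives $\OO_C\hookrightarrow F(L)$, hence $H^0(C,F(L))\neq 0$. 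This is the missing idea: the section of $F(L)$ comes not from an a priori syzygy but from identifying directly a line subbundle $L^{\vee}\subseteq F$ via the codimension-one subspace $I_D(2)\subseteq I_C(2)$.

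Two smaller points: the planes $P_0,P_1$ you mention belong to the unrelated Section~5 construction and play no role here; and your claim that $I_D(2)\otimes\OO_C\to N_C^{\vee}\otimes L^{\otimes 2}$ is surjective is false --- in the paper's diagram the cokernel along the right column is $\OO_{C|e}$, reflecting that quadrics through $D$ differentiate to conormal sections vanishing on $e$.
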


\begin{proof} Let $e=C\cap E$ and recall $c_1(F)=-3L+K_C$ and $\OO_C(e)=2L-K_C$. Note that $I_D(2)\subseteq I_C(2)$ is 6-dimensional. Tensor then  the first vertical exact sequence of the following diagram by $L$ and pass to global sections.
$$
\begin{array}{ccccccccc}
&&0&&0& &0&&\\
&&\downarrow&&\downarrow&&\downarrow&&\\
0&\longrightarrow &L^{\vee}&\longrightarrow& I_D(2)\otimes \mathcal{O}_C&\longrightarrow &\mathcal I_D/(\mathcal I_D\cap \mathcal I_C^2)(2)&\longrightarrow &0\\
&&\downarrow&&\downarrow&&\downarrow&&\\
0&\longrightarrow &F&\longrightarrow& I_C(2)\otimes \mathcal{O}_C&\longrightarrow &N_C^{\vee}(2)&\longrightarrow &0\\
&&\downarrow&&\downarrow&&\downarrow&&\\
0&\longrightarrow &\OO_C(-e)&\longrightarrow& \mathcal{O}_C&\longrightarrow &\OO_{C|e}&\longrightarrow &0\\
&&\downarrow&&\downarrow&&\downarrow&&\\
&&0&&0& &0&&\\
\end{array}.
$$
\hfill $\Box$
\end{proof}

\subsection{Nontrivial syzygies of paracanonical curves via vector bundles}
We return to the proof of Theorem \ref{theomain} given in Section \hyperref[secsecondproof]{5}.
Consider now a general paracanonical curve $[C,K_C\otimes \eta]\in P^{14}_8$. For a rank $2$ vector bundle on $C$ of degree $14$, with noncanonical determinant,
the equation $h^0(C, E)\geq 4$ imposes $16$ conditions. Similarly, if $\epsilon \in \mbox{Pic}^0(C)$, the equation $h^0(C, E\otimes \epsilon)\geq 4$ imposes $16$ conditions on the parameter space of $E$. Given $C$, there are
$29=4g-3$ parameters for $E$, and $8=g$ parameters for $\epsilon$. It follows that we have at least a $5$-dimensional family
of pairs $(E,\epsilon)$, such that
\begin{eqnarray}\label{eqEpara} h^0(C,E)\geq4 \ \mbox{ and } \ h^0(C,E\otimes \epsilon)\geq 4.
\end{eqnarray}
Furthermore, the construction of Section \hyperref[secsecondproof]{5} (together with Proposition
\ref{proquarticsing}) shows that for a general triple $(C,E,\epsilon)$ as above, one has
$K_{2,1}(C,L)\not=0$, where $L:={\rm det}\,E\otimes \epsilon$.
Assuming the map $(E,\epsilon)\mapsto L$ is generically finite on its image, we constructed in this way
a five dimensional family of paracanonical line bundles $L\in \mbox{Pic}^{14}(C)$  with a nontrivial syzygy: $K_{1,2}(C,L)\not=0$.
This family has the following property:

\begin{lemm} If $L={\rm det}\,E\otimes \epsilon$, where $E$ satisfies (\ref{eqEpara}), the line bundle
$K_C^{\otimes 2}\otimes L^{\vee}$ satisfies the same property. The  family above, which has dimension at least five, is thus invariant under
the involution $L\mapsto K_C^{\otimes 2}\otimes L^{\vee}$ on $P^{14}_8$, whose fixed locus is the Prym moduli space $\cR_8$.
\end{lemm}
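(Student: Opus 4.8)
The plan is to write down the dual pair explicitly and check that it lands back in the same family. Given a pair $(E,\epsilon)$ satisfying (\ref{eqEpara}), I would set $E':=K_C\otimes E^{\vee}$ and $\epsilon':=\epsilon^{\vee}\in\mbox{Pic}^0(C)$. Since $\deg K_C=2g-2=14$, the sheaf $E'$ is again a rank $2$ vector bundle, now of degree $2\deg K_C-\deg E=14$, with determinant $\det E'=K_C^{\otimes 2}\otimes(\det E)^{\vee}$; hence
$$\det E'\otimes\epsilon'=K_C^{\otimes 2}\otimes(\det E)^{\vee}\otimes\epsilon^{\vee}=K_C^{\otimes 2}\otimes L^{\vee}.$$
So $K_C^{\otimes 2}\otimes L^{\vee}$ is indeed of the form $\det E'\otimes\epsilon'$, provided only that $(E',\epsilon')$ also verifies (\ref{eqEpara}).

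That verification is the single substantive step, and it is where the hypothesis $\deg E=14=2g-2$ is used. Because $\deg E=\deg(E\otimes\epsilon)=2g-2$, Riemann--Roch gives $\chi(C,E)=\chi(C,E\otimes\epsilon)=0$, so Serre duality yields
$$h^0(C,E')=h^1(C,E)=h^0(C,E)\geq 4\ \ \mbox{ and }\ \ h^0(C,E'\otimes\epsilon')=h^1(C,E\otimes\epsilon)=h^0(C,E\otimes\epsilon)\geq 4.$$
Thus $(E',\epsilon')$ satisfies (\ref{eqEpara}), which proves the first assertion of the lemma. Feeding this back into the construction of Section~\hyperref[secsecondproof]{5} then shows that the family of paracanonical bundles $L$ with $K_{1,2}(C,L)\neq 0$ obtained there, which has dimension at least five, is carried into itself by the involution $\iota\colon L\mapsto K_C^{\otimes 2}\otimes L^{\vee}$ of $P^{14}_8$; note that $\iota$ does preserve $P^{14}_8$, since $\deg(K_C^{\otimes 2}\otimes L^{\vee})=14$ and $K_C^{\otimes 2}\otimes L^{\vee}\neq K_C$ exactly when $L\neq K_C$.

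Finally I would identify the fixed locus of $\iota$. One has $\iota(L)\cong L$ if and only if $L^{\otimes 2}\cong K_C^{\otimes 2}$, equivalently $\eta:=L\otimes K_C^{\vee}$ is a $2$-torsion point of $\mbox{Pic}^0(C)$, automatically non-trivial because $L\neq K_C$; this says precisely that $[C,L]\in\cR_8$, and conversely every Prym-canonical bundle is fixed. I do not anticipate any real obstacle: the entire content is the remark that $\deg E=2g-2$ forces $\chi(C,E)=0$, so that the operation $E\mapsto K_C\otimes E^{\vee}$ converts the open condition $h^0\geq 4$ into itself, and the rest is bookkeeping with determinants and torsion.
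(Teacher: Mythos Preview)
Your proof is correct and follows essentially the same approach as the paper: replace $(E,\epsilon)$ by $(K_C\otimes E^{\vee},\epsilon^{\vee})$ and invoke Serre duality, together with the determinant identity ${\rm det}(K_C\otimes E^{\vee})\otimes\epsilon^{\vee}\cong K_C^{\otimes 2}\otimes L^{\vee}$. You spell out more explicitly than the paper the Riemann--Roch step $\chi(E)=\chi(E\otimes\epsilon)=0$ needed to pass from $h^0\geq 4$ to $h^1\geq 4$, and you also verify the description of the fixed locus, but the argument is the same.
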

\begin{proof} This follows from Serre duality, replacing $E$ with
$E^{\vee}\otimes K_C$ and  $E\otimes \epsilon$ by $E^{\vee}\otimes \epsilon^{\vee}\otimes K_C$ plus the fact that
 ${\rm det}\,(E^{\vee}\otimes K_C)\otimes \epsilon^{\vee}\cong K_C^{\otimes 2}\otimes {\rm det}\,E^{\vee}\otimes \epsilon^{\vee}$.
\hfill $\Box$
\end{proof}

One can ask in general the following question:

\begin{question} Is the divisor $\mathfrak{Kosz}$ on $P^{14}_8$  invariant under the involution $L\mapsto K_C^{\otimes 2}\otimes L^{\vee}$?
 \end{question}

\providecommand{\bysame}{\leavevmode\hbox to3em{\hrulefill}\thinspace}
%
%

\bibliographystyle{amsalpha}
\bibliographymark{References}

\end{document}